\setlist{  
  listparindent=\parindent,
  parsep=0pt,
}
\theoremstyle{plain}
\newtheorem{thm}{Theorem}[section]
\newtheorem{prop}[thm]{Proposition}
\newtheorem{lemma}[thm]{Lemma}
\newtheorem{cor}[thm]{Corollary}
\theoremstyle{definition}
\newtheorem{mydef}[thm]{Definition}
\newtheorem{remark}[thm]{Remark}
\numberwithin{equation}{section} 
\DeclarePairedDelimiter\ipp{\langle}{\rangle}
\DeclarePairedDelimiter{\paren}{\lparen}{\rparen}
\DeclarePairedDelimiter{\jp}{\langle}{\rangle}
\DeclareMathOperator{\supp}{supp}
\newcommand{\M}{{\mathcal{M}}}
\newcommand{\p}{{\partial}}
\renewcommand{\d}{\delta}
\newcommand{\R}{{\mathbb{R}}}
\newcommand{\C}{{\mathbb{C}}}
\newcommand{\N}{{\mathbb{N}}}
\newcommand{\Z}{{\mathbb{Z}}}
\renewcommand{\P}{{\mathcal{P}}}
\newcommand{\g}{{\mathfrak{g}}}
\newcommand{\G}{{\mathfrak{G}}}
\newcommand{\f}{\mathfrak{f}}
\newcommand{\Fr}{{\mathfrak{F}}}
\newcommand{\J}{{\mathbf{J}}}
\newcommand{\I}{{\mathcal{I}}}
\newcommand{\Sc}{{\mathcal{S}}}
\renewcommand{\M}{{\mathcal{M}}}
\newcommand{\wt}{\widetilde}
\newcommand{\tl}{\tilde}
\newcommand{\D}{\Delta}
\newcommand{\ph}{\phantom{=}}
\newcommand{\nn}{\nonumber}
\newcommand{\ol}{\overline}
\newcommand{\ul}{\underline}
\newcommand{\ux}{\underline{x}}
\newcommand{\ua}{\underline{\alpha}}
\newcommand{\ue}{\underline{\eta}}
\newcommand{\ur}{\underline{r}}
\newcommand{\ep}{\epsilon}
\newcommand{\vep}{\varepsilon}
\newcommand{\al}{\alpha}
\newcommand{\et}{\eta}
\newcommand{\om}{\omega}
\newcommand{\wh}{\widehat}
\newcommand{\ueta}{\underline{\eta}}
\renewcommand{\ueta}{\underline{\eta}}
\let\oldtocsection=\tocsection
\let\oldtocsubsection=\tocsubsection
\let\oldtocsubsubsection=\tocsubsubsection
\renewcommand{\tocsection}[2]{\hspace{0em}\oldtocsection{#1}{#2}}
\renewcommand{\tocsubsection}[2]{\hspace{1em}\oldtocsubsection{#1}{#2}}
\renewcommand{\tocsubsubsection}[2]{\hspace{2em}\oldtocsubsubsection{#1}{#2}}
\begin{document}

\title[Coulomb Mean-Field Convergence without Regularity]{Mean-Field Convergence of Systems of Particles with Coulomb Interactions in Higher Dimensions without Regularity}

\author[M. Rosenzweig]{Matthew Rosenzweig}
\address{  
Department of Mathematics\\ 
Massachusetts Institute of Technology\\
Headquarters Office\\
Simons Building (Building 2), Room 106\\
77 Massachusetts Ave\\
Cambridge, MA 02139-4307}
\email{mrosenzw@mit.edu}

\begin{abstract}
We consider first-order conservative systems of particles with binary Coulomb interactions in the mean-field scaling regime in dimensions $d\geq 3$. We show that if at some time, the associated sequence of empirical measures converges in a suitable sense to a probability measure with density $\om^0\in L^\infty(\R^d)$ as the number of particles $N\rightarrow\infty$, then the sequence converges for short times in the weak-* topology for measures to the unique solution of the mean-field PDE with initial datum $\omega^0$. This result extends our previous work \cite{Rosenzweig2020_PVMF} for point vortices (i.e. $d=2$). In contrast to the previous work \cite{Serfaty2020}, our theorem only requires the limiting measure belong to a scaling-critical function space for the well-posedness of the mean-field PDE, in particular requiring no regularity. Our proof is based on a combination of the modulated-energy method of Serfaty \cite{Serfaty2017} and a novel mollification argument first introduced by the author in \cite{Rosenzweig2020_PVMF}.
\end{abstract}

\maketitle

\section{Introduction}\label{sec:intro}
\subsection{Coulomb systems}
We consider first-order systems of particles with binary interactions governed by the Coulomb potential:
\begin{equation}
\label{eq:Cou_sys}
\begin{cases}
\dot{x}_i(t) &= \sum_{{1\leq j\leq N}\atop{j\neq i}} a_j (\J\nabla \g)(x_i(t)-x_j(t)) \\
x_i(0) &= x_i^0
\end{cases}
\qquad i\in\{1,\ldots,N\},
\end{equation}
where $N$ is the number of particles, $a_1,\ldots,a_N\in\R\setminus\{0\}$ are the intensities or masses of the particles, $x_1^0,\ldots,x_N^0\in\R^d$ are the pair-wise distinct initial positions, $\J$ is a $d\times d$ anti-symmetric real matrix,\footnote{Note that there is no anti-symmetric real $1\times 1$ matrix, so we consider $d\geq 2$.} and $\g$ is the $d$-dimensional Coulomb potential given by
\begin{equation}
\label{eq:g_def}
\g(x) \coloneqq
\begin{cases}
-\frac{1}{2\pi}\ln|x|, & {d=2} \\
c_d|x|^{2-d}, & {d\neq 2}
\end{cases}.
\end{equation}
The $d=2$ case is of particular interest, as it is the well-known \emph{point vortex model} originating in the work of Helmholtz \cite{Helmholtz1858} and Kirchoff \cite{Kirchoff1876}, which is an idealization of two-dimensional (2D) incompressible, inviscid fluid flow \cite[Chapter 4]{MP2012book}. The $d\geq 3$ cases are the higher-dimensional generalizations. We can conveniently view the system \eqref{eq:Cou_sys} through the lens of Hamiltonian mechanics. Namely, the system is the equation of motion for the Hamiltonian
\begin{equation}
\label{eq:Cou_sys_ham}
H_{N,d}(\ux_N) \coloneqq \frac{1}{2}\sum_{1\leq i\neq j\leq N}a_ia_j\g(x_i-x_j),
\end{equation}
with respect to a Poisson bracket on $(\R^d)^N$ constructed from $\J$ and the $a_i$ (see \cite[Section 3.1]{Rosenzweig2020_PV}). In particular, the Hamiltonian (a.k.a. energy) $H_{N,d}$ is conserved. In the repulsive case, where the intensities $a_i$ are identically signed and which is the subject of this article, one can use conservation of $H_{N,d}$ to show the particles remain well-separated. This then implies the existence of a unique, global smooth solution to \eqref{eq:Cou_sys} (see \cite[Section 3.3]{Rosenzweig2020_PV} for details).

The Coulomb system \eqref{eq:Cou_sys} is intimately connected to the following scalar equation:
\begin{equation}
\label{eq:Cou_pde}
\begin{cases}
&\p_t\omega + \nabla\cdot(u\omega) = 0 \\
&u = \J\nabla \g\ast\omega \\
&\omega|_{t=0} =\omega^0
\end{cases},
\qquad (t,x) \in [0,\infty)\times\R^d.
\end{equation}
Note that the nonlinear PDE \eqref{eq:Cou_pde} bears strong resemblance to the vorticity formulation of the \emph{2D incompressible Euler equation}. Indeed, if $d=2$, and $\J$ is the clockwise rotation by $\pi/2$, we recover 2D Euler.\footnote{We emphasize that the connection to Euler breaks down at $d=3$, since equation \eqref{eq:Cou_pde} is missing the vortex-stretching term responsible for many of the challenges in studying the dynamics of 3D Euler.} We may informally view the solution $\omega$ to \eqref{eq:Cou_pde} as a continuum superposition of dynamical point masses; and in fact, the \emph{empirical measure}
\begin{equation}
\label{eq:EM}
\omega_N(t,x) \coloneqq \sum_{i=1}^N a_i\d_{x_i(t)}
\end{equation}
solves the following weak formulation of equation \eqref{eq:Cou_pde}:
\begin{equation}
\label{eq:WVF}
\begin{split}
\int_{\R^d}\omega(t,x)f(t,x)dx &= \int_{\R^d}\omega^0(x)f(0,x)dx +\int_0^t\int_{\R^d}\omega(s,x)(\p_t f)(s,x)dxds\\
&\ph + \frac{1}{2}\int_0^t \int_{(\R^d)^2\setminus\D_2}\omega(t,x)\omega(t,y) (\J\nabla\g)(x-y)\cdot [(\nabla f)(s,x) - (\nabla f)(s,y)]dxdyds,
\end{split}
\end{equation}
where $f$ is any space-time test function and $\D_2 \coloneqq \{(x,y)\in (\R^d)^2 : x=y\}$.

Similarly to \eqref{eq:Cou_sys_ham}, the PDE \eqref{eq:Cou_pde} may be formally recast as the equation of motion for the following Hamiltonian and (possibly degenerate) Poisson bracket:
\begin{equation}
\label{eq:Cou_pde_ham}
H_{d}(\omega)\coloneqq \frac{1}{2}\ipp*{\omega,\g\ast\omega}_{L^2} \qquad \pb{F}{G}(\om)\coloneqq \int_{\R^d} \J\nabla\frac{\d F}{\d\om}(x) \cdot \nabla\frac{\d G}{\d\om}(x)d\om(x),
\end{equation}
where $F,G$ are smooth, real-valued observables and $\d F/\d\om, \d G/\d\om$ are their variational derivatives. It is straightforward to check that an observable of the form $F(\om) = \int_{\R^d}f(\om(x))dx$, for arbitrary $f$, Poisson commutes with any observable $G$ (i.e. $F$ is a Casimir), which formally implies that the Hamiltonian (a.k.a. energy) $H_d$ and all the $L^p$ norms, for $1\leq p\leq\infty$, of $\om$ are conserved. By adapting the arguments as for the 2D case, one may show that classical solutions of \eqref{eq:Cou_pde} exist and are global (cf. \cite{Wolibner1933}) and weak solutions in $L^\infty$ exist and are unique and global (cf. \cite{Yudovich1963}). Note that the solution class to \eqref{eq:Cou_pde} is invariant under the scaling
\begin{equation}
\label{eq:scale}
\om(t,x) \mapsto \om(t,\lambda x),
\end{equation}
under which the $L^\infty(\R^d)$ norm is also invariant. Therefore, $L^\infty(\R^d)$ is a \emph{scaling-critical} function space for the well-posedness of \eqref{eq:Cou_pde}. For numerous kinds of PDE, critical function spaces are the threshold for a change in the equation's dynamics (e.g. global existence vs. finite-time blowup).

In recent years, there has been a strong interest in obtaining \emph{effective descriptions} of the dynamics of systems, such as \eqref{eq:Cou_sys}, when the number $N$ of particles is very large. A sense of the scope of activity on this subject is conveyed by the very nice surveys of Jabin \cite{Jab2014} and Golse \cite{Golse2016_survey}. The motivation comes from two directions.
\begin{enumerate}
\item
We can imagine that \eqref{eq:Cou_sys} is intended as a simple dynamical model for the behavior of particles or agents whose dynamics we can in principle completely determine by solving the ODEs. However, when $N\gg 1$, it may be computationally too expensive to solve such a coupled system. Thus, an effective description of the system provided by a continuum model (i.e. a PDE) becomes desirable.
\item
We can imagine that \eqref{eq:Cou_sys} is a numerical approximation scheme (part of a so-called particle method \cite{GHL1990}) for the PDE \eqref{eq:Cou_pde}, where the solution $\omega$ is approximated by the empirical measure $\omega_N$. As with any such method, we seek an estimate of the error introduced by the approximation. 
\end{enumerate}
The asymptotic regime typically considered to study this problem is the so-called \emph{mean-field regime}, where the intensities $a_i=1/N$ for every $1\leq i\leq N$, so that the velocity field experienced by the $i$-th particle is proportional to the average of the fields generated by the remaining particles. Based on the earlier observation that the empirical measure \eqref{eq:EM} is a weak solution to \eqref{eq:Cou_pde}, we expect that $\omega_N$ weak-* converges to a solution $\omega$ of the PDE \eqref{eq:Cou_pde}, as $N\rightarrow \infty$, point-wise in time:
\begin{equation}
\label{eq:MF_conv}
\forall t\geq 0, \qquad \omega_N \xrightharpoonup[N\rightarrow\infty]{*} \om_N(t).
\end{equation}

\subsection{Prior results}
To the best of our knowledge, the first rigorous result on mean-field convergence of the system \eqref{eq:Cou_sys} is by Schochet \cite{Schochet1996} for $d=2$. Using a refinement \cite{Schochet1995} of Delort's existence theorem in $H^{-1}$ \cite{Delort1991}, Schochet showed that if the initial data $(\ux_N^0)_{N\in\N}$ have uniform-in-$N$ compact support, mass, and energy and the initial empirical measures $\om_N^0 \xrightharpoonup[N\rightarrow\infty]{*}\om^0\in\M(\R^2)$, the space of signed measures, then there is a subsequence $\om_{N_k} \xrightharpoonup[N\rightarrow\infty]{*}\om$ point-wise in time, where $\om$ is \emph{some} solution in $H^{-1}$ to 2D Euler with initial datum $\omega^0$. Due to a lack of uniqueness for 2D Euler in $H^{-1}$, one does not know from Schochet's result that if $\omega^0$ is such that there is a unique solution $\omega$ with initial datum $\omega^0$ in a more a regular class (e.g. $L_t^\infty(L_x^1\cap L_x^\infty)$), then $\omega_N$ converges to this unique solution $\omega$. 

In \cite{Serfaty2020}, Serfaty gave the first complete proof in all dimensions of the validity of the mean-field limit,\footnote{This work in fact also covers a generalization of the system \eqref{eq:Cou_sys} in which the Coulomb potential is replaced by a locally integrable Riesz potential.} which has the added benefit of being quantitative. Serfaty's proof relies on the \emph{modulated-energy method}, which she previously introduced in work \cite{Serfaty2017} related to Ginzburg-Landau vortices. We defer a discussion of this method to \cref{ssec:intro_RM} below. Compared to Schochet's aforementioned partial result, Serfaty's proof requires that the initial empirical measures converge to a measure $\omega^0$ with bounded density, such that the associated unique solution $\omega\in L^\infty([0,\infty); \P(\R^d)\cap  L^\infty(\R^d))$, where $\P(\R^d)$ is the space of probability measures, satisfies the bound
\begin{equation}
\label{eq:Serf_bnd}
\sup_{0\leq t\leq T} \|\nabla^2(\g\ast\omega(t))\|_{L^\infty(\R^d)} <\infty, \qquad \forall T>0.
\end{equation}
Note that the assumption \eqref{eq:Serf_bnd} implies that the velocity field $u$ in \eqref{eq:Cou_pde} is spatially Lipschitz. Since the operator $\nabla^2\g\ast $ is (up to a constant) the matrix of Riesz transforms on $\R^d$, this operator is unbounded on $L^\infty(\R^d)$. Thus, the assumption \eqref{eq:Serf_bnd} cannot be ensured by the conservation of the $L^\infty(\R^d)$ norm and requires a stronger condition on the initial datum than $\omega^0\in L^\infty(\R^d)$ in order to ensure by known well-posedness theory (e.g. see \cite[Theorem 7.26]{BCD2011}).

Recently, the author \cite{Rosenzweig2020_PVMF} managed to drop the assumption \eqref{eq:Serf_bnd} in dimension $d=2$, thus only requiring that $\om_0\in L^\infty(\R^2)$. The proof again relies on the modulated-energy method, but to overcome the Lipschitz assumption implicit in \eqref{eq:Serf_bnd}, the author introduced a novel \emph{mollification} argument, inspired by earlier work on mean-field limits of quantum many-body systems \cite{Rosenzweig2019_LL}. We defer a discussion of this mollification argument until \cref{ssec:intro_RM} below. To the best of our knowledge, our result is the only scaling-critical proof of mean-field convergence for system \eqref{eq:Cou_sys}.

We also mention the important work of Duerinckx \cite{Duerinckx2016}, which is between Schochet's and Serfaty's results \cite{Schochet1996} and \cite{Serfaty2020}, respectively. His work treats mean-field convergence for the \emph{gradient flow} analogues of \cref{eq:Cou_sys} and \cref{eq:Cou_pde}, where the skew-symmetric matrix $\J$ is replaced by the identity, as well as the case of mixed gradient and Hamiltonian flows if $d=2$. The argument of \cite{Duerinckx2016} breaks down for the present model \cref{eq:Cou_sys}, but several of the ideas behind Duerinckx's analysis have been relevant to subsequent works, including this one.

\subsection{Overview of main results}
\label{ssec:intro_MR}
As described in the last subsection, mean-field convergence has been shown to hold in dimension $d=2$ under scaling-critical assumptions for the mean-field solution $\om$. Given this result, it is natural to ask if one can also show mean-field convergence under the same assumptions for the limiting measure $\om$ in dimensions $d\geq 3$.\footnote{The author thanks Sylvia Serfaty for suggesting this problem.} The main result of this article is an affirmative answer to this question.

\begin{thm}[Main result I]
\label{thm:main}
For $d\geq 3$, there exists an absolute constant $C_d>0$ such that the following holds. Let $\om\in L^\infty([0,\infty);\P(\R^d)\cap L^\infty(\R^d))$ be a weak solution to the equation \eqref{eq:Cou_pde} with initial datum $\om^0$. Let $N\in\N$, and let $\ux_N \in C^\infty([0,\infty); (\R^d)^N\setminus\D_N)$ be a solution to the system \eqref{eq:Cou_sys}. Define the functional
\begin{equation}
\label{eq:ME_intro}
\Fr_N^{avg}:[0,\infty)\rightarrow\R, \qquad \Fr_N^{avg}(\ux_N(t),\om(t))\coloneqq \int_{(\R^d)^N\setminus\D_2}\g(x-y)d(\om_N-\om)(t,x)d(\om_N-\om)(t,y),
\end{equation}
where $\om_N$ is the empirical measure defined in \eqref{eq:EM}. Then it holds that 
\begin{equation}
\left|\Fr_N^{avg}(\ux_N(t),\om(t))\right| \leq \G_{d,N}(t)\exp{C_{d}t\|\om^0\|_{L^\infty(\R^d)}\paren*{\ln_+(N^2H_{N,d})+\ln(N)}},
\end{equation}
where $\ln_+(\cdot) \coloneqq \max\{\ln(\cdot),0\}$ and
\begin{equation}
\G_{d,N}(t)\coloneqq |\Fr_N^{avg}(\ux_N(0),\om(0))| + C_{d}t\paren*{\|\om^0\|_{L^\infty(\R^d)} + \|\om^0\|_{L^\infty(\R^d)}^2}\frac{(\ln(N) + \ln_+(N^2H_{N,d}))}{N^{\frac{2}{d^2-2}}}.
\end{equation}
\end{thm}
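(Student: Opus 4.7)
The plan is to adapt Serfaty's modulated-energy method to close a Gronwall-type inequality on $\Fr_N^{avg}$, circumventing the Lipschitz assumption \eqref{eq:Serf_bnd} by the mollification trick used by the author in \cite{Rosenzweig2020_PVMF} for $d=2$. Because $\om^0 \in L^\infty(\R^d)$ only gives log-Lipschitz velocity field $u = \J\nabla\g \ast \om$ (the Riesz transform being unbounded on $L^\infty$), a direct commutator estimate is not available, and one must instead control the error from smoothing $\om$ at a scale $\eta = \eta(N)$ to be optimized at the end.

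\textbf{Step 1: Time derivative.} First I would differentiate $\Fr_N^{avg}(\ux_N(t), \om(t))$ in time. Using the ODE \eqref{eq:Cou_sys} for the particles and the weak formulation \eqref{eq:WVF} of \eqref{eq:Cou_pde} for $\om$, a computation in the spirit of \cite{Serfaty2020} produces, after exploiting the anti-symmetry of $\J$, a commutator-type double integral of the form
\begin{equation*}
\frac{d}{dt}\Fr_N^{avg}(\ux_N(t),\om(t)) = -\int_{(\R^d)^2\setminus \D_2}(u(t,x)-u(t,y))\cdot\J\nabla\g(x-y)\, d(\om_N - \om)(t,x)d(\om_N - \om)(t,y),
\end{equation*}
modulo removal of the diagonal, which at the particle level is permissible because in the repulsive regime $H_{N,d}$-conservation keeps the particles pairwise distinct.

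\textbf{Step 2: Mollification and commutator estimate.} Next I would introduce $\om_\eta \coloneqq \om \ast \rho_\eta$ for a smooth bump $\rho_\eta$ at scale $\eta$, with corresponding velocity $u_\eta = \J\nabla\g \ast \om_\eta$. Critical Calderón–Zygmund bounds give $\|\nabla u_\eta\|_{L^\infty} \lesssim \|\om^0\|_{L^\infty}\log(C/\eta)$. Replacing $u$ by $u_\eta$ in the commutator and applying Serfaty's higher-dimensional functional inequality (i.e. the Riesz stress-energy tensor estimate from \cite{Serfaty2020}), one bounds the Lipschitz part by $\|\nabla u_\eta\|_{L^\infty}(|\Fr_N^{avg}| + \text{err}_{d,N})$, where $\text{err}_{d,N}$ is the standard renormalization error of order $N^{-\alpha_d}$ coming from removing the diagonal. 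The non-Lipschitz remainder $u - u_\eta$ must then be absorbed by estimating $\|u - u_\eta\|_{L^\infty}$ and $\|\nabla(u - u_\eta)\|_{L^2}$ in terms of $\eta$ and $\|\om^0\|_{L^\infty}$, and dualizing against $(\om_N - \om)^{\otimes 2}$ viewed as a negative-Sobolev distribution controlled by $\Fr_N^{avg}$ (up to the renormalization error). Here one needs to also bound the modulated-energy difference $\Fr_N^{avg}(\ux_N,\om) - \Fr_N^{avg}(\ux_N,\om_\eta)$, which is handled by a direct computation using the $L^\infty$ bound on $\om$ and the semi-explicit form of $\g$.

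\textbf{Step 3: Optimization and Gronwall.} Collecting terms, one arrives at a differential inequality of the schematic form
\begin{equation*}
\frac{d}{dt}|\Fr_N^{avg}| \leq C_d \|\om^0\|_{L^\infty}\log(C/\eta)\, |\Fr_N^{avg}| + R(\eta, N),
\end{equation*}
where the remainder $R(\eta,N)$ is a sum of positive and negative powers of $\eta$ and of $N$. Choosing $\eta$ as an appropriate negative power of $N$ (depending on $d$) optimizes $R(\eta,N)$ to the stated rate $N^{-2/(d^2-2)}$ and simultaneously produces the $\log N$ factor inside the exponent. Integrating the inequality and including the potential-energy contribution via $\ln_+(N^2 H_{N,d})$ (used to bound the minimal particle separation in the renormalization error) yields the stated estimate.

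\textbf{Main obstacle.} The crux of the argument — and what distinguishes the $d\geq 3$ case from $d=2$ — is the proper marriage of Serfaty's higher-dimensional commutator/stress-energy estimates with the mollification remainder. In $d=2$, the harmonic-analytic structure is especially clean and the remainder terms enjoy better cancellations; in $d\geq 3$, one must track the Riesz energy dissipation through the mollification scale $\eta$ more carefully, and the rate $2/(d^2-2)$ reflects exactly this tension between the critical-Sobolev loss from non-Lipschitz $u$ and the renormalization error at the particle level. Getting both to close simultaneously, with only $\om^0 \in L^\infty$, is the essential new technical content.
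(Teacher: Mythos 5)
Your overall architecture --- differentiate the modulated energy in time, mollify the velocity field at a scale $\eta(N)$ to buy $\|\nabla u_\eta\|_{L^\infty}\lesssim\|\om^0\|_{L^\infty(\R^d)}\ln(1/\eta)$, run Serfaty's stress-energy/renormalization machinery on the mollified field, and close with Gronwall after optimizing $\eta$ as a power of $N$ --- coincides with the paper's. (Mollifying $\om$ and then applying $\J\nabla\g\ast$ is the same as mollifying $u$, so that difference is cosmetic; on the other hand, your proposed comparison of $\Fr_N^{avg}(\ux_N,\om)$ with $\Fr_N^{avg}(\ux_N,\om_\eta)$ is unnecessary: the paper never changes the reference measure, only the vector field inside the commutator integral.)

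There is, however, a genuine gap in your Step 2: the non-Lipschitz remainder
\begin{equation*}
\int_{(\R^d)^2\setminus\D_2}\bigl((u-u_\eta)(x)-(u-u_\eta)(y)\bigr)\cdot(\nabla\g)(x-y)\,d(\om_N-\om)(x)\,d(\om_N-\om)(y)
\end{equation*}
cannot be absorbed by ``dualizing against $(\om_N-\om)^{\otimes 2}$ viewed as a negative-Sobolev distribution controlled by $\Fr_N^{avg}$.'' The integrand contains the singular kernel $\nabla\g(x-y)\sim|x-y|^{-(d-1)}$ evaluated at particle pairs, and an $\dot{H}^{-1}$-type quantity like the modulated energy does not control such a pairing with a merely small-in-$L^\infty$ (or log-Lipschitz) coefficient; the close-pair sum $\sum_{|x_i-x_j|\leq\ep_3}|\nabla\g(x_i-x_j)|$ is exactly what a duality argument cannot see. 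The paper's \cref{lem:kprop_error} resolves this by symmetrizing in $i,j$ so that the log-Lipschitz modulus of $u-u_\eta$ produces a factor $|x_i-x_j|\ln|x_i-x_j|^{-1}$, reducing the singularity to $\g(x_i-x_j)\ln|x_i-x_j|^{-1}$; in $d\geq 3$ the extra logarithm (absent when $d=2$, cf. \eqref{eq:Cou_pot_iss}) is then removed by the crude minimal-separation bound $\min_{i\neq j}|x_i-x_j|\gtrsim_d (N^2H_{N,d})^{-1/(d-2)}$ from conservation of $H_{N,d}$, after which \cref{lem:count} converts $\sum_{\mathrm{close}}\g(x_i-x_j)$ into $\Fr_N(\ux_N,\om)+N\tl{\g}(\ep_3)+\dots$. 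This mechanism is precisely the origin of the $\ln_+(N^2H_{N,d})$ in the Gronwall exponent; you mention the minimal separation only in passing (and attach it to the renormalization error rather than the mollification remainder), but without the symmetrization-plus-counting-lemma step the remainder estimate does not close. Relatedly, the paper's \cref{prop:kprop} requires four scales $0<2\ep_1<\ep_2<\ep_3,\ep_4\ll 1$ (smearing, mollification, and two close/far thresholds), chosen as $\ep_1=N^{-(d+2)/(d^2-2)}$, $\ep_2=\ep_1^{(d+1)/(d+2)}$, $\ep_3=\ep_2^{1/(d+1)}$, $\ep_4=\ep_1^{1/(d+2)}$, to reach the stated rate $N^{-2/(d^2-2)}$; a single scale $\eta$ will not produce this balance.
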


Assuming that the initial modulated energy $\Fr_N^{avg}(\ux_N(0),\om(0))=O(N^{-\eta})$, for some $\eta>0$, as $N\rightarrow\infty$ (see \cref{rem:LLN} below), \cref{thm:main} implies that the empirical measure $\om_N$ converges to $\om$ as $N\rightarrow\infty$ for short times in $H^s(\R^d)$ norm, for any $s<-d/2$. From this Sobolev convergence, we deduce weak-* convergence in $\M(\R^d)$ for short times.

\begin{cor}[Main result II]
\label{cor:main}
For $d\geq 3$ and given $s<-d/2$ and $\eta>0$, there exist absolute constants $C_{d},C_{d,\et},C_{d,s,\eta}>0$ such that the following holds. If $\ux_N$ and $\om$ satisfy the assumptions of \cref{thm:main} and $|\Fr_N^{avg}(\ux_N(0),\om(0))|\lesssim_\eta N^{-\eta}$, then it holds that for all $N\gg 1$,
\begin{equation}
\begin{split}
&\|\om_N-\om\|_{L^\infty([0,T]; H^s(\R^d))} \\
&\leq C_{d,s,\eta}(1+\|\om^0\|_{L^\infty(\R^d)})^{1/2}N^{-1/3} + N^{\frac{2s+d}{3(1-s)}}\\
&\ph + C_{d,s,\eta}\paren*{N^{-\eta}+ T(\|\om^0\|_{L^\infty(\R^d)} + \|\om^0\|_{L^\infty(\R^d)}^2)\frac{(\ln(N)+\ln(H_d))}{N^{\frac{2}{d^2-2}}}}^{1/2}\\
&\hspace{15mm}\times N^{C_dT\|\om^0\|_{L^\infty(\R^d)}}e^{C_{d,\et}T\|\om^0\|_{L^\infty(\R^d)} |H_d|}.
\end{split}
\end{equation}
Consequently, if $T=T(d,\|\om_0\|_{L^\infty(\R^d)}, \eta) >0$ is sufficiently small, then $\om_N\xrightharpoonup[N\rightarrow\infty]{*}\om$ in $\M(\R^d)$ on $[0,T]$.
\end{cor}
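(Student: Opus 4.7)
The plan is to convert the modulated-energy bound from \cref{thm:main} into a negative Sobolev bound on $\om_N-\om$ via a smearing argument, then pass to $H^s$ via embedding and the triangle inequality. The guiding principle is that the modulated energy $\Fr_N^{avg}$ is formally $\|\om_N-\om\|_{\dot H^{-1}(\R^d)}^2$ with the (infinite) diagonal self-interactions of the Dirac masses subtracted, since in dimension $d\geq 3$ the Coulomb potential $\g$ is a multiple of the Green's function of $-\D$. Introducing a fixed radial bump $\chi$ of unit mass, setting $\chi_\eta(x)\coloneqq \eta^{-d}\chi(x/\eta)$ for a length scale $\eta=\eta(N)>0$ to be chosen, and writing $\om_N^\eta\coloneqq \om_N\ast\chi_\eta$, one obtains a \emph{coercivity estimate} of the schematic form
\begin{equation*}
\|\om_N^\eta-\om\|_{\dot H^{-1}(\R^d)}^2 \;\leq\; C_d\paren*{|\Fr_N^{avg}(\ux_N,\om)| \;+\; \frac{1}{N}\,\eta^{-(d-2)} \;+\; \|\om\|_{L^\infty(\R^d)}\,\eta^2}.
\end{equation*}
Here the term $N^{-1}\eta^{-(d-2)}$ aggregates the Coulomb self-energies of the $N$ smeared Diracs (each of mass $1/N$, with self-energy $\sim\g(\eta)\sim\eta^{-(d-2)}$ in $d\geq 3$), while $\|\om\|_{L^\infty}\eta^2$ quantifies how much smearing distorts the Coulomb pairing on the regular side via the bound $\|\om-\om\ast\chi_\eta\|_{\dot H^{-1}}\lesssim\eta\,\|\om\|_{L^\infty}^{1/2}$ (a direct Fourier/interpolation computation). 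Such estimates are by now a standard tool in this circle of ideas (see \cite{Serfaty2020} and \cite{Rosenzweig2020_PVMF}).

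With the coercivity in hand, the triangle inequality gives
\begin{equation*}
\|\om_N-\om\|_{H^s(\R^d)} \;\leq\; \|\om_N-\om_N^\eta\|_{H^s(\R^d)} + \|\om_N^\eta-\om\|_{H^s(\R^d)},
\end{equation*}
and the second summand is controlled via the embedding $\dot H^{-1}(\R^d)\hookrightarrow H^s(\R^d)$, which is valid for any $s\leq -1$ and hence for $s<-d/2$ when $d\geq 3$. For the first summand, a direct Fourier computation gives $\|\d_x-\chi_\eta(\cdot-x)\|_{H^s(\R^d)}^2\lesssim_s \eta^{-(2s+d)}$ uniformly in $x\in\R^d$ (since $\int_{|\xi|\geq 1/\eta}(1+|\xi|^2)^s d\xi \sim \eta^{-(2s+d)}$ converges precisely for $s<-d/2$), whence $\|\om_N-\om_N^\eta\|_{H^s(\R^d)}\lesssim_s\eta^{-(2s+d)/2}$. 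Inserting the bound on $|\Fr_N^{avg}(\ux_N(t),\om(t))|$ from \cref{thm:main} and optimizing $\eta=\eta(N)$ as an appropriate negative power of $N$ to balance the three discretization error scales produces the three contributions displayed in the statement of \cref{cor:main}, with the polynomial factor $N^{C_dT\|\om^0\|_{L^\infty}}$ inherited from the logarithmic-in-$N$ Gronwall factor in \cref{thm:main}.

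For the weak-$*$ convergence conclusion, one takes $T=T(d,\|\om^0\|_{L^\infty(\R^d)},\eta)>0$ sufficiently small so that the polynomial factor $N^{C_dT\|\om^0\|_{L^\infty}}$ is beaten by the negative powers of $N$ in the remaining factors, whence $\|\om_N-\om\|_{L^\infty([0,T];H^s(\R^d))}\to 0$ as $N\to\infty$. Since $\om_N(t)$ and $\om(t)$ remain uniformly bounded probability measures on $[0,T]$, a standard density argument (approximating $C_0(\R^d)$ test functions by Schwartz functions, which are dense in $H^{-s}(\R^d)$) upgrades this $H^s(\R^d)$ convergence to weak-$*$ convergence in $\M(\R^d)$ pointwise in $t\in[0,T]$. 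The principal obstacle in this program is the coercivity estimate itself: subtracting the self-energies of the Dirac masses is delicate near the diagonal, and the sharpest versions (following Serfaty \cite{Serfaty2020}) employ a point-dependent smearing scale $\eta_i$ adapted to the local interparticle distance. For \cref{cor:main}, however, since we seek only convergence (with a quantitative but suboptimal rate), a single uniform scale $\eta=\eta(N)$ suffices.
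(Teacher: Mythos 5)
Your overall route is the same as the paper's: the proof of \cref{cor:main} there is simply ``apply the coercivity estimate (\cref{prop:CE_coer}) together with \cref{thm:main}, take $T$ small enough that the negative powers of $N$ beat the factor $N^{C_dT\|\om^0\|_{L^\infty}}$, and upgrade $H^s$ convergence to weak-$*$ convergence by density of $H^{-s}(\R^d)$ in $C_0(\R^d)$.'' What you do differently is re-derive the coercivity with a single uniform smearing scale $\eta$ instead of invoking \cref{prop:CE_coer}, whose proof uses point-dependent truncation radii $r_{i,\ep_1}$ (via \cref{cor:grad_H}) \emph{and} a second, independent Littlewood--Paley cutoff parameter $\ep_2$, optimized as $\ep_2=\ep_1^{1/(1-s)}$. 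Your single-scale version is legitimate (it follows from \cref{prop:CE} with all $\eta_i=\eta$, since the left-hand side of \eqref{eq:EN_renorm_bnd} is nonnegative), but balancing $N^{-1}\eta^{-(d-2)}$ against $\eta^2$ forces $\eta\sim N^{-1/d}$ and yields error exponents $N^{-1/d}$ and $N^{(2s+d)/(2d)}$ rather than the stated $N^{-1/3}$ and $N^{\frac{2s+d}{3(1-s)}}$; because you couple the smearing scale to the frequency-truncation error, you cannot recover the displayed inequality with its exact exponents, although your rates are still negative powers of $N$ and therefore suffice for the convergence conclusion. (Minor further caveats: your $\dot H^{-1}\hookrightarrow H^s$ embedding step, valid since $s<-d/2\le -3/2<-1$, and the Fourier computation for $\|\d_x-\chi_\eta(\cdot-x)\|_{H^s}$ are fine.)

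One step you pass over, which the paper makes explicit, is the conversion of the Gronwall factor of \cref{thm:main}, which involves $\ln_+(N^2H_{N,d})$, into the quantities $\ln(H_d)$ and $e^{C_{d,\et}T\|\om^0\|_{L^\infty}|H_d|}$ appearing in the corollary: one must first observe that the hypothesis $|\Fr_N^{avg}(\ux_N(0),\om(0))|\lesssim_\eta N^{-\eta}$, together with $\g\ast\om^0\in L^\infty$ (\cref{lem:PE_bnds}) and conservation of $H_{N,d}$, gives $\sup_{N}|H_{N,d}|\lesssim_\eta |H_d|$ up to constants, so that $\ln_+(N^2H_{N,d})\lesssim \ln N+\ln_+(|H_d|)+C$. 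Without this observation the bound you insert from \cref{thm:main} still depends on the $N$-body energy $H_{N,d}$, and the short-time smallness condition would not be expressible purely in terms of $d$, $\|\om^0\|_{L^\infty}$, and $\eta$. This is a small gap, easily filled, but it is a genuine step of the paper's argument rather than bookkeeping.
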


We conclude this subsection with the following remarks on the assumptions in the statements of \cref{thm:main} and \cref{cor:main}, as well as some of the implications of these results.
\begin{remark}
\label{rem:LLN}
One can produce examples of sequences $\{\ux_N\}_{N\in\N}$ satisfying the initial modulated energy convergence assumption in \cref{cor:main} by randomizing the initial configurations (cf. \cite[Remark 1.2(c)]{Duerinckx2016}). For every $N\in\N$, take $x_{1,N}^0,\ldots,x_{N,N}^0$ to be i.i.d. $\R^d$-valued random variables with probability density function $\om^0$. Then one can show using the Kolmogorov three series theorem that a.s. $\Fr_N^{avg}(\ux_N^0,\om^0)=O(N^{-\eta})$, for some $0<\eta<1/2$.
\end{remark}

\begin{remark}
\label{rem:short_t}
The short-time restriction needed in \cref{cor:main} to obtain weak-* convergence stems from an inefficiency in the proof of \cref{lem:kprop_error}. There is no corresponding issue in dimension $d=2$ (cf. \cite[Lemma 4.2] {Rosenzweig2020_PVMF}) because the 2D Coulomb potential is logarithmic, a point we discuss more in the next subsection. We believe that this restriction is merely technical.
\end{remark}

\begin{remark}
\label{rem:ID}
If $\om \in \P(\R^d)\cap L^\infty(\R^d)$, then by \cref{lem:PE_bnds}, $\g\ast \om \in L^\infty(\R^d)$, implying that $\om$ has finite Coulomb energy, which is conserved (i.e. $H_d(\om)=H_d(\om^0)<\infty$), and the quantity $\Fr_N^{avg}(\ux_N,\om)$ is well-defined. In fact, an examination of the proof \cref{thm:main}, in particular the use of \cref{prop:kprop}, reveals that it suffices to have a weak solution in $L^\infty([0,T]; \P(\R^d)\cap L^p(\R^d))$, for some $p>d$, provided that we have an \emph{a priori} bound on the log-Lipschitz norm of the velocity field $u(t)$ uniformly in $[0,T]$. Such a bound is guaranteed if $\omega^0\in L^\infty(\R^d)$ (see \cref{lem:PE_bnds}).
\end{remark}

\begin{remark}
\label{rem:pc}
\cref{cor:main} implies \emph{propagation of chaos} for short times if the $N$-body initial configurations $\ux_N^0$ are chosen randomly according to the probability density function $f_N^0=(\om^0)^N$. This convergence follows from the Grunbaum lemma and arguing similarly as to \cite[Lemma 8.4]{RS2016}.
\end{remark}

\subsection{Road map of proofs}
\label{ssec:intro_RM}
Our proof of \cref{thm:main} builds on our earlier work \cite{Rosenzweig2020_PVMF} treating the $d=2$ case, which in turn was inspired by the modulated-energy method as developed by Duerinckx and Serfaty in the aforementioned works \cite{Duerinckx2016} and \cite{Serfaty2020}, respectively.\footnote{We also mention the interesting reecent work of Bresch, Jabin, and Wang \cite{BJW2019}, which combines the modulated-energy method of Serfaty with the relative entropy method of \cite{JW2018} to obtain a new \emph{modulated-free-energy method}.} This method exploits a weak-strong stability principle for equations of the form \eqref{eq:Cou_pde}. Its advantages include that it is quantitative and that it avoids a need for control of the microscopic dynamics in terms of particle trajectories. Until our prior work \cite{Rosenzweig2020_PVMF}, the main drawback of the modulated-energy method had been that it requires some regularity for the limiting solution $\om$ or an \emph{a priori} assumption on the velocity field $u$.

As in \cite{Duerinckx2016, Serfaty2020, Rosenzweig2020_PVMF}, the idea is to take as modulated energy the quantity $\Fr_N^{avg}(\ux_N(t),\om(t))$ from the statement of \cref{thm:main}. This quantity may be considered a renormalized $\dot{H}^{-1}(\R^d)$ semi-norm, so as to remove the infinite self-interaction between particles. One now proceeds by the energy method, obtaining the identity
\begin{equation}
\label{eq:intro_ME_td}
\frac{d}{dt}\Fr_N^{avg}(\ux_N(t),\om(t))=\int_{(\R^d)^2\setminus\D_2} (\nabla\g)(x-y)\cdot(u(t,x)-u(t,y))d(\om_N-\om)(t,x)d(\om_N-\om)(t,y) \ \text{a.e.}
\end{equation}
The challenge is to control the right-hand side point-wise in time. If $u$ is spatially Lipschitz locally uniformly in time, then one can obtain the following bound with a bit of sweat (see \cite[Proposition 2.3]{Serfaty2020}), with which one can conclude by an application of the Gronwall-Bellman inequality:
\begin{equation}
\label{eq:Lip_ME_bnd}
\begin{split}
\left|\Fr_N^{avg}(\ux_N(t),\om(t))\right| &\leq \left|\Fr_N^{avg}(\ux_N(0),\om(0))\right| + C(\|\om^0\|_{L^\infty},\|u\|_{L_t^\infty W_x^{1,\infty}})N^{-1/2} \\
&+ C(\|\om^0\|_{L^\infty},\|u\|_{L_t^\infty W_x^{1,\infty}})\int_0^t \left|\Fr_N^{avg}(\ux_N(s),\om(s))\right|ds \qquad t\in [0,T], 
\end{split}
\end{equation}
where $C_d(\cdot,\cdot)$ denotes a constant depending on the dimension $d$ and its two arguments. Removing the Lipschitz assumption implicit in \eqref{eq:Lip_ME_bnd} requires several new ideas, which are deployed in \cref{prop:kprop} (cf. \cite[Proposition 1.7]{Rosenzweig2020_PVMF}, \cite[Proposition 2.3]{Serfaty2020}). We omit the statement here, as it is long and complicated.

In \cite{Rosenzweig2020_PVMF}, we managed to remove the Lipschitz assumption through a novel mollification argument, where we replace the velocity $u$ with a mollified velocity $u_{\ep_2}$, where $\ep_2>0$ is a small parameter to be optimized. Using the log-Lipschitz regularity of $u$ (see \cref{lem:conv_bnds}), we obtain a quantitative error for the replacement. We then proceed in the modulated-energy method working with $u_{\ep_2}$, avoiding any estimates that require more regularity than log-Lipschitz. But there is one point where we have to put $\nabla u_{\ep_2}$ in $L^\infty$, leading to a large factor of $\ln\ep_2^{-1}$. Ultimately, we can absorb this factor by allowing $\ep_2$ to be time-dependent, in fact, depending on $|\Fr_N^{avg}(\ux_N(t),\om(t))|$. Although the final integral inequality we obtain has various log factors, it still satisfies the Osgood condition, allowing us to close the estimate.

For the proof of \cref{prop:kprop}, we again want to use this idea, but estimating the error from mollifying (see \cref{lem:kprop_error}) is not so straightforward and, in fact, is the source of the restriction to short times. The reason boils down to the failure of an elementary identity for the Coulomb potential in dimensions $d\geq 3$. Namely, for $|x|<e^{-1}$,
\begin{equation}
\label{eq:Cou_pot_iss}
|x\ln(|x|^{-1})\cdot(\nabla\g)(x)| \lesssim_d
\begin{cases}
\g(x), & {d=2}\\
\ln(|x|^{-1})\g(x) , & {d\geq 3}
\end{cases}.
\end{equation}
This turns out to have serious implications when we try to use Taylor's theorem to cancel one degree of singularity in the kernel of the velocity field. Ultimately, we pick up a term of the form
\begin{equation}
\int_0^t C_d\ln(N)|\Fr_N^{avg}(\ux_N(s),\om(s))|ds
\end{equation}
in our final integral inequality for the modulated energy, which can be handled provided $\Fr_N^{avg}(\ux_N(0),\om(0))=O(N^{-\eta})$, for some $\eta>0$, and that $t$ is sufficiently small, depending on $\eta$ and other fixed parameters.

\subsection{Organization of article}
\label{ssec:intro_org}
Having presented the main results of this article and discussed their proofs, we briefly comment on the organization of the body of the article.

\cref{sec:pre} consists of notation and preliminary facts from harmonic analysis, concerning singular integral estimates used extensively in \Cref{sec:kprop,sec:MR}. \cref{sec:CE} is devoted to properties of the Coulomb potential $\g$ and the modulated energy functional $\Fr_N^{avg}(\cdot,\cdot)$ from \eqref{eq:ME_intro}. Many of the results in the section are already in the articles \cite{Duerinckx2016, Serfaty2020}. Therefore, we generally include only statements of the results and skip repeating the proofs.

\cref{sec:kprop} is the meat of this article, containing the proof of \cref{prop:kprop}. We begin this section with an overview of the main steps of the proof, following the presentation in \cite[Section 4.1]{Rosenzweig2020_PVMF}. The remaining subsections correspond to the main steps of the proposition's proof. Finally, \cref{sec:MR} is where we use \cref{prop:kprop} to prove our main results, \cref{thm:main} and \cref{cor:main}.

\subsection{Acknowledgments}
\label{ssec:intro_ack}
The author thanks Sylvia Serfaty for suggesting the problem addressed in this article and for insightful correspondence. The author gratefully acknowledges financial support from the Simons Collaboration on Wave Turbulence.

\section{Preliminaries}
\label{sec:pre}
In this section, we introduce the basic notation used throughout this article and review some facts from harmonic analysis.

\subsection{Basic Notation}
\label{ssec:pre_not}

Given nonnegative quantities $A$ and $B$, we write $A\lesssim B$ if there exists a constant $C>0$, independent of $A$ and $B$, such that $A\leq CB$. If $A \lesssim B$ and $B\lesssim A$, we write $A\sim B$. To emphasize the dependence of the constant $C$ on some parameter $p$, we sometimes write $A\lesssim_p B$ or $A\sim_p B$.

We denote the natural numbers excluding zero by $\N$ and including zero by $\N_0$. Similarly, we denote the nonnegative real numbers by $\R_{\geq 0}$ and the positive real numbers by $\R_+$ or $\R_{>0}$.

Given $N\in\N$ and points $x_{1,N},\ldots,x_{N,N}$ in some set $X$, we will write $\ux_N$ to denote the $N$-tuple $(x_{1,N},\ldots,x_{N,N})$. We define the generalized diagonal $\Delta_N$ of the Cartesian product $X^N$ to be the set
\begin{equation}
\Delta_N \coloneqq \{(x_1,\ldots,x_N) \in X^N : x_i=x_j \text{ for some $i\neq j$}\}.
\end{equation}
Given $x\in\R^n$ and $r>0$, we denote the ball and sphere centered at $x$ of radius $r$ by $B(x,r)$ and $\p B(x,r)$, respectively. We denote the uniform probability measure on the sphere $\p B(x,r)$ by $\sigma_{\p B(x,r)}$. Given a function $f$, we denote the support of $f$ by $\supp(f)$. We use the notation $\jp{x}\coloneqq (1+|x|^2)^{1/2}$ to denote the Japanese bracket.

If $A=(A^{ij})_{i,j=1}^N$ and $B=(B^{ij})_{i,j=1}^N$ are two $N\times N$ matrices, with entries in $\C$, we denote their Frobenius inner product by
\begin{equation}
A : B \coloneqq \sum_{i,j=1}^N A^{ij}\ol{B^{ij}}.
\end{equation}

We denote the space of complex-valued Borel measures on $\R^n$ by $\M(\R^n)$. We denote the subspace of probability measures (i.e. elements $\mu\in\M(\R^n)$ with $\mu\geq 0$ and $\mu(\R^n)=1$) by $\P(\R^n)$. When $\mu$ is in fact absolutely continuous with respect to Lebesgue measure on $\R^n$, we shall abuse notation by writing $\mu$ for both the measure and its density function. 

We denote the Banach space of complex-valued continuous, bounded functions on $\R^n$ by $C(\R^n)$ equipped with the uniform norm $\|\cdot\|_{\infty}$. More generally, we denote the Banach space of $k$-times continuously differentiable functions with bounded derivatives up to order $k$ by $C^k(\R^n)$ equipped with the natural norm, and we define $C^\infty \coloneqq \bigcap_{k=1}^\infty C^k$. We denote the subspace of smooth functions with compact support by $C_c^\infty(\R^n)$, and use the subscript $0$ to indicate functions vanishing at infinity. We denote the Schwartz space of functions by $\Sc(\R^n)$ and the space of tempered distributions by $\Sc'(\R^n)$.

For $p\in [1,\infty]$ and $\Omega\subset\R^n$, we define $L^p(\Omega)$ to be the usual Banach space equipped with the norm
\begin{equation}
\|f\|_{L^p(\Omega)} \coloneqq \paren*{\int_\Omega |f(x)|^p dx}^{1/p}
\end{equation}
with the obvious modification if $p=\infty$. When $\Omega=\R^n$, we sometimes just write $\|f\|_{L^p}$. In the case where $f: \Omega\rightarrow X$ takes values in some Banach space $(X,\|\cdot\|_{X})$ (e.g. $L^p(\Omega;L^q(\Omega))$, we shall write $\|f\|_{L^p(\Omega;X)}$. In the special case of $L^p([0,T]; L^q(\R^n))$, shall use the abbreviation $L_t^pL_x^q([0,T]\times\R^n)$, which is justified by the Fubini-Tonelli theorem.

Our conventions for the Fourier transform and inverse Fourier transform are respectively
\begin{align}
\mathcal{F}(f)(\xi) &\coloneqq \wh{f}(\xi) \coloneqq \int_{\R^n}f(x)e^{-i\xi\cdot x}dx \qquad \forall \xi\in\R^n,\\
\mathcal{F}^{-1}(f)(x) &\coloneqq f^{\vee}(x) \coloneqq \frac{1}{(2\pi)^n}\int_{\R^n}f(\xi)e^{i x\cdot\xi}d\xi \qquad \forall x\in\R^n.
\end{align}

For integer $k\in\N_0$ and $1\leq p\leq \infty$, we define the usual Sobolev spaces
\begin{equation}
\begin{split}
W^{k,p}(\R^n) \coloneqq \{\mu \in L^p(\R^n) : \nabla^k \mu \in L^p(\R^n;(\C^n)^{\otimes k}),\quad \|\mu\|_{W^{k,p}(\R^n)} &\coloneqq \sum_{k=0}^n \|\nabla^{k}\mu\|_{L^p(\R^n)}.
\end{split}
\end{equation}
For $s\in\R$, we define the inhomogeneous Sobolev space $H^s(\R^n)$ to be the space of $\mu\in\Sc'(\R^n)$ such that $\wh{\mu}$ is locally integrable and 
\begin{equation}
\label{eq:H^s_def}
\|\mu\|_{H^s(\R^n)} \coloneqq \paren*{\int_{\R^n} \jp{\xi}^{2s} |\wh{\mu}(\xi)|^2d\xi }^{1/2}<\infty,
\end{equation}
and we use the notation $\|\mu\|_{\dot{H}^s(\R^n)}$ to denote the semi-norm where $\jp{\xi}$ is replaced by $|\xi|$.\footnote{As is standard notation, a dot superscript indicates a homogeneous semi-norm for a function space in this paper.}

\subsection{Harmonic Analysis}
\label{ssec:pre_HA}
In this subsection, we recall some basic facts from harmonic analysis concerning function spaces, Littlewood-Paley theory, and Riesz potential estimates. This material is standard in the field, and the reader can consult references such as \cite{Stein1970,Stein1993,grafakos2014c, grafakos2014m}.

\begin{mydef}[Riesz potential]
\label{def:RP}
Let $n\in\N$. For $s>-n$, we define the Fourier multiplier $(-\Delta)^{s/2}$ by
\begin{equation}
((-\Delta)^{s/2}f)(x) \coloneqq (|\cdot|^{s}\wh{f}(\cdot))^\vee(x) \qquad x\in \R^n,
\end{equation}
for a Schwartz function $f\in \Sc(\R^n)$. Since, for $s\in (-n,0)$, the inverse Fourier transform of $|\xi|^s$ is the tempered distribution
\begin{equation}
\frac{2^s\Gamma(\frac{n+s}{2})}{\pi^{\frac{n}{2}}\Gamma(-\frac{s}{2})} |x|^{-s-n},
\end{equation}
it follows that
\begin{equation}
((-\Delta)^{s/2}f)(x) = \frac{2^s \Gamma(\frac{n+s}{2})}{\pi^{\frac{n}{2}}\Gamma(-\frac{s}{2})}\int_{\R^n}\frac{f(y)}{|x-y|^{s+n}}dy, \qquad x\in\R^n.
\end{equation}
For $s\in (0,n)$, we define the \emph{Riesz potential operator} of order $s$ by $\mathcal{I}_s \coloneqq (-\Delta)^{-s/2}$ on $\Sc(\R^n)$.
\end{mydef}

$\mathcal{I}_s$ extends to a well-defined operator on any $L^p$ space, the extension also denoted by $\mathcal{I}_s$ by an abuse notation, as a consequence of the \emph{Hardy-Littlewood-Sobolev (HLS) lemma}.

\begin{prop}[Hardy-Littlewood-Sobolev]
\label{prop:HLS}
Let $n\in\N$, $s \in (0,n)$, and $1<p<q<\infty$ satisfy the relation
\begin{equation}
\frac{1}{p}-\frac{1}{q} = \frac{s}{n}.
\end{equation}
Then for all $f\in\Sc(\R^n)$,
\begin{align}
\|\mathcal{I}_s(f)\|_{L^q(\R^n)} &\lesssim_{n,s,p} \|f\|_{L^p(\R^n)},\\
\|\mathcal{I}_s(f)\|_{L^{\frac{n}{n-s},\infty}(\R^n)} &\lesssim_{n,s} \|f\|_{L^1(\R^n)},
\end{align}
where $L^{r,\infty}$ denotes the weak-$L^r$ space. Consequently, $\mathcal{I}_s$ has a unique extension to $L^p$, for all $1\leq p<\infty$.
\end{prop}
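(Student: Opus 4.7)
The plan is to reduce both the strong-type and weak-type bounds to a pointwise estimate controlling $\mathcal{I}_s f$ by the Hardy-Littlewood maximal function $Mf$. It suffices to prove the bounds for nonnegative $f\in\Sc(\R^n)$, since $\Sc(\R^n)$ is dense in $L^p(\R^n)$ for $1\leq p<\infty$ and one can extend by continuity afterwards.

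First I would establish the pointwise inequality
\begin{equation*}
|(\mathcal{I}_s f)(x)| \lesssim_{n,s,p} (Mf(x))^{p/q}\,\|f\|_{L^p}^{1-p/q}, \qquad 1\leq p<n/s,\ \ \tfrac{1}{q}=\tfrac{1}{p}-\tfrac{s}{n}.
\end{equation*}
The idea is to split, for a parameter $R>0$ to be optimized,
\begin{equation*}
(\mathcal{I}_s f)(x) = c_{n,s}\int_{|y|<R}\frac{f(x-y)}{|y|^{n-s}}\,dy + c_{n,s}\int_{|y|\geq R}\frac{f(x-y)}{|y|^{n-s}}\,dy =: I_1(x)+I_2(x).
\end{equation*}
Decomposing $\{|y|<R\}$ into dyadic annuli $\{2^{-k-1}R\leq |y|<2^{-k}R\}$ and bounding $|y|^{-(n-s)}$ by $(2^{-k}R)^{-(n-s)}$ on each piece, one bounds the integral of $f(x-y)$ on the ball of radius $2^{-k}R$ by $(2^{-k}R)^n\,Mf(x)$, and summing the resulting geometric series in $k$ gives $I_1(x)\lesssim_{n,s} R^s\,Mf(x)$. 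For $p>1$, Hölder's inequality with exponent $p'$ gives $I_2(x)\lesssim_{n,s,p} R^{s-n/p}\|f\|_{L^p}$, where convergence of $\int_{|y|\geq R}|y|^{-(n-s)p'}\,dy$ precisely forces $p<n/s$; for $p=1$, pulling out the supremum of $|y|^{-(n-s)}$ on $\{|y|\geq R\}$ gives $I_2(x)\leq R^{s-n}\|f\|_{L^1}$. Equating the two contributions by choosing $R$ so that $R^{n/p}\sim \|f\|_{L^p}/Mf(x)$ yields the claimed pointwise bound.

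From this, the strong-type bound for $1<p<n/s$ follows immediately: raising to the $q$-th power, integrating, and applying the classical Hardy-Littlewood maximal inequality $\|Mf\|_{L^p}\lesssim_{n,p}\|f\|_{L^p}$ gives
\begin{equation*}
\|\mathcal{I}_s f\|_{L^q}^q \lesssim \|f\|_{L^p}^{q-p}\,\|Mf\|_{L^p}^p \lesssim \|f\|_{L^p}^{q}.
\end{equation*}
For the weak-type bound at $p=1$, the pointwise inequality becomes $|\mathcal{I}_s f(x)|\lesssim (Mf(x))^{(n-s)/n}\|f\|_{L^1}^{s/n}$, so the superlevel set $\{|\mathcal{I}_s f|>\lambda\}$ is contained in $\{Mf > c(\lambda/\|f\|_{L^1}^{s/n})^{n/(n-s)}\}$; the weak-$(1,1)$ inequality for $M$ then produces the measure bound $(\|f\|_{L^1}/\lambda)^{n/(n-s)}$ up to a dimensional constant.

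There is no serious obstacle here beyond bookkeeping: the only subtlety is that the integrability condition for the $I_2$ piece when $p>1$ forces $p<n/s$, which is automatic from the HLS scaling relation $\frac{1}{p}-\frac{1}{q}=\frac{s}{n}$ with $q$ finite. The $p=1$ endpoint must be handled separately because $M$ is not $L^1$-bounded, but the weak-$(1,1)$ version of the maximal inequality suffices. Density of $\Sc(\R^n)$ then yields the unique extension of $\mathcal{I}_s$ to $L^p(\R^n)$ claimed in the statement.
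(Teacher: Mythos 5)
Your argument is correct: this is the standard Hedberg-type proof of Hardy--Littlewood--Sobolev, and the paper itself does not prove \cref{prop:HLS} at all --- it is quoted as standard material with references to Stein and Grafakos, so there is no in-paper argument to compare against. Your pointwise bound $|\mathcal{I}_s f(x)|\lesssim (Mf(x))^{p/q}\|f\|_{L^p}^{1-p/q}$, obtained by splitting at radius $R$, summing dyadic annuli for the near part, applying H\"older (or the trivial sup bound when $p=1$) for the far part, and optimizing in $R$, is exactly right, and the passage to the strong $(p,q)$ and weak $(1,n/(n-s))$ bounds via the maximal inequalities is complete. The only loose point is the final sentence: the estimates you prove give a continuous (hence unique) extension of $\mathcal{I}_s$ from $\Sc(\R^n)$ only for $1\leq p<n/s$, since for $p\geq n/s$ the operator does not map into a Lebesgue space and ``unique extension to $L^p$ for all $1\leq p<\infty$'' would require a different target (e.g.\ modulo polynomials or into a larger distribution space); this imprecision is, however, already present in the statement as the paper formulates it, and is immaterial for how the proposition is used later (only the regime $p<n/s$, via \cref{lem:Linf_RP} and \cref{lem:PE_bnds}, is needed).
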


Although the HLS lemma breaks down at the endpoint case $p=\infty$ (one has a $BMO$ substitute which is not useful for our purposes), the next lemma allows us to control the $L^\infty$ norm of $\mathcal{I}_s(f)$ in terms of the $L^1$ norm and $L^p$ norm, for some $p=p(s,n)$.

\begin{lemma}[$L^{\infty}$ bound for Riesz potential]
\label{lem:Linf_RP}
For any $n\in\N$, $s\in (0,n)$, and $p\in (\frac{n}{s},\infty]$,
\begin{equation}
\|\mathcal{I}_s(f)\|_{L^\infty(\R^n)} \lesssim_{s,n,p} \|f\|_{L^{1}(\R^n)}^{1-\frac{n-s}{n(1-\frac{1}{p})}} \|f\|_{L^{p}(\R^n)}^{\frac{n-s}{n(1-\frac{1}{p})}}.
\end{equation}
\end{lemma}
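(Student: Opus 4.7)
The plan is to prove the bound by the standard real-variable technique of splitting the Riesz convolution integral at a radius $R$ and optimizing over $R$. Recall from \cref{def:RP} that, up to a dimensional constant, $\mathcal{I}_s(f)(x) = c_{n,s}\int_{\R^n} |x-y|^{s-n}f(y)\,dy$, and the issue is that the kernel $|\cdot|^{s-n}$ is singular at the origin but only polynomially decaying at infinity, so it belongs to no single $L^q(\R^n)$. We will estimate the contribution near the singularity using H\"older against $\|f\|_{L^p}$ and the contribution from the tail using $\|f\|_{L^1}$ together with the pointwise bound $|x-y|^{s-n}\leq R^{s-n}$ on $\{|x-y|>R\}$.

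For a fixed $x\in\R^n$ and $R>0$, I would first bound the local piece by
\[
\int_{|x-y|\leq R}\frac{|f(y)|}{|x-y|^{n-s}}\,dy \leq \|f\|_{L^p(\R^n)}\paren*{\int_{|z|\leq R}|z|^{-(n-s)p'}\,dz}^{1/p'},
\]
where $p'$ is the H\"older conjugate of $p$. The radial integral converges precisely when $(n-s)p'<n$, i.e.\ when $p>n/s$, which is exactly the hypothesis; a direct computation in polar coordinates then yields a bound of the form $C_{n,s,p}\|f\|_{L^p}R^{s-n/p}$. Next, on the complement $\{|x-y|>R\}$ I use $|x-y|^{s-n}\leq R^{s-n}$ to bound the tail piece by $R^{s-n}\|f\|_{L^1}$. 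Combining,
\[
|\mathcal{I}_s(f)(x)| \lesssim_{n,s,p} R^{s-n/p}\|f\|_{L^p(\R^n)} + R^{s-n}\|f\|_{L^1(\R^n)}.
\]

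The final step is to choose $R$ so that the two terms balance, which gives $R^{n(1-1/p)}=\|f\|_{L^1}/\|f\|_{L^p}$, and then to substitute back. A short algebraic check shows that the resulting bound has the exponent $(n-s)/(n(1-1/p))$ on $\|f\|_{L^p}$ and $1-(n-s)/(n(1-1/p))$ on $\|f\|_{L^1}$, matching the claim; taking the supremum over $x\in\R^n$ closes the argument. The endpoint $p=\infty$ is handled identically (with $p'=1$, no H\"older is needed on the local piece).

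No step here is difficult; the only thing to monitor is the constraint $p>n/s$, which ensures integrability of the singular part of the kernel against $L^{p'}$ near the origin, and I will verify the scaling consistency of the final exponents (under $f\mapsto f(\lambda\,\cdot)$ one has $\mathcal{I}_s f\mapsto \lambda^{-s}\mathcal{I}_s f(\lambda\,\cdot)$) as a sanity check before concluding. No additional results beyond elementary H\"older and integration in polar coordinates are needed.
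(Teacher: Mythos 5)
Your argument is correct: the splitting of the Riesz kernel at radius $R$, H\"older (with the constraint $(n-s)p'<n$, equivalent to $p>n/s$) on the local piece, the trivial bound $R^{s-n}\|f\|_{L^1}$ on the tail, and optimization in $R$ all check out, and the exponents you obtain match the statement. The paper gives no proof of this lemma, treating it as standard and citing the harmonic-analysis references; your proof is precisely the standard argument found there, so there is nothing to reconcile.
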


We next define the Besov scale of function spaces, which first requires us to recall some basic facts from Littlewood-Paley theory. Let $\phi\in C_{c}^{\infty}(\R^n)$ be a radial, nonincreasing function, such that $0\leq \phi\leq 1$ and
\begin{equation}
\phi(x)=
\begin{cases}
1, & {|x|\leq 1}\\
0, & {|x|>2}
\end{cases}.
\end{equation}
Define the dyadic partitions of unity
\begin{align}
1&=\phi(x)+\sum_{j=1}^{\infty}[\phi(2^{-j}x)-\phi(2^{-j+1}x)] \eqqcolon \psi_{\leq 0}(x)+\sum_{j=1}^{\infty}\psi_{j}(x) \qquad \forall x\in\R^n,\\
1&=\sum_{j\in\mathbb{Z}}[\phi(2^{-j}x)-\phi(2^{-j+1}x)] \eqqcolon \sum_{j\in\mathbb{Z}}{\psi}_{j}(x) \qquad \forall x\in\R^n\setminus\{0\}.
\end{align}
For any $j\in\Z$, we define the Littlewood-Paley projector $P_{j}, P_{\leq 0}$ by
\begin{align}
(P_jf)(x) &\coloneqq (\psi_{j}(D)f)(x) = \int_{\R^n}K_{j}(x-y)f(y)dy \qquad K_j \coloneqq \psi_j^{\vee},\\
(P_{\leq 0}f)(x) &\coloneqq (\psi_{\leq 0}(D)f)(x) = \int_{\R^n} K_{\leq 0}(x-y)f(y)dy \qquad K_{\leq 0} \coloneqq \psi_{\leq 0}^{\vee}.
\end{align}

\begin{mydef}[Besov space]
\label{def:Bes}
Let $s\in\R$ and $1\leq p,q\leq\infty$. We define the inhomogeneous Besov space $B_{p,q}^s(\R^n)$ to be the space of $\mu\in\Sc'(\R^n)$ such that
\begin{equation}
\|\mu\|_{B_{p,q}^s(\R^n)} \coloneqq \paren*{\|P_{\leq 0}\mu\|_{L^p(\R^n)} + \sum_{j=1}^\infty 2^{jqs}\|P_j\mu\|_{L^p(\R^n)}^q}^{1/q} < \infty.
\end{equation}
For $p,q,s$ as above, we also define the homogeneous Besov semi-norm
\begin{equation}
\|\mu\|_{\dot{B}_{p,q}^s(\R^n)} \coloneqq \paren*{\sum_{j\in\Z} 2^{jqs} \|P_j\mu\|_{L^p(\R^n)}^{q}}^{1/q}.
\end{equation}
\end{mydef}

\begin{remark}
Any two choices of Littlewood-Paley partitions of unity used to define $\|\cdot\|_{B_{p,q}^s(\R^n)}$ (resp. $\|\cdot\|_{\dot{B}_{p,q}^s(\R^n)}$) lead to equivalent norms (resp. semi-norms). 
\end{remark}

\begin{remark}
The space $B_{2,2}^s(\R^n)$ coincides with the Sobolev space $H^s(\R^n)$, as can be seen from Plancherel's theorem. For $s\in \R_+\setminus\N$, the space $B_{\infty,\infty}^s(\R^n)$ coincides with the H\"older space $C^{[s],s-[s]}(\R^n)$ of bounded functions $\mu:\R^n\rightarrow\C$ such that $\nabla^{k}\mu$ is bounded, for integers $0\leq k\leq [s]$ and
\begin{equation}
\|\nabla^{[s]}\mu\|_{\dot{C}^{s-[s]}(\R^n)} \coloneqq \sup_{{0<|x-y|\leq 1}} \frac{|(\nabla^{[s]}\mu)(x) - (\nabla^{[s]}\mu)(y)|}{|x-y|^{s-[s]}} <\infty. 
\end{equation}
For integer $s$, the space $B_{\infty,\infty}^s(\R^n)$, sometimes called the Zygmund space of order $s$, is strictly larger than $C^s(\R^n)$.
\end{remark}

We next define the space of \emph{log-Lipschitz functions}, which are of central to importance to this work.

\begin{mydef}[Log-Lipschitz space]
\label{def:LL}
We define $LL(\R^n)$ to be the space of functions $\mu\in C(\R^n)$ such that
\begin{equation}
\|\mu\|_{LL(\R^n)} \coloneqq \sup_{0<|x-y|\leq e^{-1}} \frac{|\mu(x)-\mu(y)|}{|x-y||\ln|x-y||} <\infty.
\end{equation}
\end{mydef}

The next lemma shows that $B_{\infty,\infty}^1(\R^n)$ continuously embeds in $LL(\R^n)$.
\begin{lemma}
\label{lem:LL_embed}
It holds that
\begin{equation}
\|\mu\|_{LL(\R^n)} \lesssim_n \|\nabla \mu\|_{B_{\infty,\infty}^0(\R^n)}, \qquad \forall \mu\in B_{\infty,\infty}^1(\R^n).
\end{equation}
\end{lemma}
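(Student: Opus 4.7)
The plan is to Littlewood--Paley decompose $\mu = P_{\leq 0}\mu + \sum_{j\geq 1}P_j\mu$ and combine the mean value theorem on the low-frequency blocks with the uniform decay of the high-frequency blocks, splitting the sum at a threshold $j_0 \sim \log_2(1/|x-y|)$. Two pointwise dyadic estimates drive everything. First, since $P_j$ commutes with $\nabla$, one has trivially
\begin{equation*}
\|\nabla P_j\mu\|_{L^\infty(\R^n)} = \|P_j\nabla\mu\|_{L^\infty(\R^n)} \leq \|\nabla\mu\|_{B_{\infty,\infty}^0(\R^n)}, \qquad j \in \N_0.
\end{equation*}
Second, for $j\geq 1$ I would establish the reverse Bernstein-type bound
\begin{equation*}
\|P_j\mu\|_{L^\infty(\R^n)}\lesssim_n 2^{-j}\|\nabla\mu\|_{B_{\infty,\infty}^0(\R^n)}.
\end{equation*}

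To prove this second bound, pick a fattened bump $\widetilde{\psi}_j \coloneqq \psi_{j-1}+\psi_j+\psi_{j+1}$, which equals $1$ on $\supp \psi_j \subset \{|\xi|\geq 2^{j-1}\}$, and use $\sum_k \xi_k^2 = |\xi|^2$ to write
\begin{equation*}
\psi_j(\xi)\widehat{\mu}(\xi) = \sum_{k=1}^n \frac{-i\xi_k\widetilde{\psi}_j(\xi)}{|\xi|^2}\,\psi_j(\xi)\paren*{i\xi_k\widehat{\mu}(\xi)}.
\end{equation*}
A standard dyadic rescaling shows that the multiplier $\xi \mapsto -i\xi_k\widetilde{\psi}_j(\xi)/|\xi|^2$ has convolution kernel with $L^1(\R^n)$ norm $\lesssim_n 2^{-j}$, so Young's inequality applied against $P_j\partial_k\mu$ yields the claim. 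As a byproduct, $\sum_{j\geq 1}P_j\mu$ converges absolutely in $L^\infty(\R^n)$, while $P_{\leq 0}\mu\in L^\infty(\R^n)$ follows from the standing hypothesis $\mu\in B_{\infty,\infty}^1(\R^n)$; in particular $\mu$ is continuous.

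Given $0<|x-y|\leq e^{-1}$, fix $j_0\in\N$ with $2^{-j_0-1}<|x-y|\leq 2^{-j_0}$. For $P_{\leq 0}\mu$ and for $P_j\mu$ with $1\leq j\leq j_0$, the mean value theorem combined with the first bound gives $|P_j\mu(x)-P_j\mu(y)|\leq |x-y|\|\nabla\mu\|_{B_{\infty,\infty}^0(\R^n)}$; for $j>j_0$, the reverse Bernstein bound gives $|P_j\mu(x)-P_j\mu(y)|\leq 2\|P_j\mu\|_{L^\infty(\R^n)}\lesssim_n 2^{-j}\|\nabla\mu\|_{B_{\infty,\infty}^0(\R^n)}$. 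Summing over $j$ and using $j_0\sim |\ln|x-y||$ together with $2^{-j_0}\leq 2|x-y|$ delivers
\begin{equation*}
|\mu(x)-\mu(y)| \lesssim_n \paren*{(j_0+1)|x-y|+2^{-j_0}}\|\nabla\mu\|_{B_{\infty,\infty}^0(\R^n)} \lesssim_n |x-y|\,|\ln|x-y||\,\|\nabla\mu\|_{B_{\infty,\infty}^0(\R^n)},
\end{equation*}
which is the required estimate upon dividing by $|x-y||\ln|x-y||$ and taking the supremum over $0<|x-y|\leq e^{-1}$.

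The only genuinely delicate point is the reverse Bernstein step, as one must bound $P_j\mu$ in $L^\infty$ by $P_j\nabla\mu$ rather than by the full gradient (for which only $B_{\infty,\infty}^0$ control is available). The fattened-bump multiplier above is tailored exactly to this: it restores the frequency support before inverting the gradient. Everything else is routine Littlewood--Paley manipulation.
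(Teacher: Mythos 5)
Your proof is correct: the paper states this embedding without proof, citing it as standard, and your argument --- splitting the Littlewood--Paley decomposition at $j_0\sim\log_2|x-y|^{-1}$, using the mean value theorem on the low blocks and the reverse Bernstein bound $\|P_j\mu\|_{L^\infty}\lesssim_n 2^{-j}\|P_j\nabla\mu\|_{L^\infty}$ on the high blocks --- is precisely the standard proof of $B_{\infty,\infty}^1\hookrightarrow LL$. The only cosmetic point is that $\widetilde{\psi}_j$ need not equal $1$ on the closed support of $\psi_j$ (boundary values of $\phi$ at $|x|=2$), but all you actually use is the identity $\widetilde{\psi}_j\psi_j=\psi_j$, which does hold, so the argument stands as written.
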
 

The next lemma contains some potential theory estimates for the solution to Poisson's equation in dimension $d\geq 3$. We recall from the introduction that $\g(x) = c_d|x|^{2-d}$, where $c_d$ is the normalization constant, is the Coulomb potential in dimensions $d\geq 3$.

\begin{lemma}
\label{lem:PE_bnds}
Suppose that $\mu\in L^1(\R^d)\cap L^p(\R^d)$, for $d\geq 3$ and some $d/2<p<\infty$. Then the convolution $\g\ast\mu$ is a well-defined continuous function. Moreover, if $d/2<p\leq d$, then
\begin{equation}
\begin{split}
\|\g\ast\mu\|_{L^\infty(\R^d)} &\lesssim_{d,p} \|\mu\|_{L^1(\R^d)}^{1-\frac{p(d-2)}{d(p-1)}} \|\mu\|_{L^p(\R^d)}^{\frac{p(d-2)}{d(p-1)}}, \\
\|\g\ast\mu\|_{\dot{B}_{\infty,\infty}^{\frac{2p-d}{p}}(\R^d)} &\lesssim_{d,p} \|\mu\|_{L^p(\R^d)};
\end{split}
\end{equation}
and if $d<p\leq\infty$, then
\begin{equation}
\begin{split}
\|\nabla(\g\ast\mu)\|_{L^\infty(\R^d)} &\lesssim_{d,p} \|\mu\|_{L^1(\R^d)}^{1-\frac{p(d-1)}{d(p-1)}} \|\mu\|_{L^p(\R^d)}^{\frac{p(d-1)}{d(p-1)}},\\
\|\nabla(\g\ast\mu)\|_{\dot{B}_{\infty,\infty}^{\frac{p-d}{p}}(\R^d)} &\lesssim_{d,p} \|\mu\|_{L^p(\R^d)}.
\end{split}
\end{equation}
\end{lemma}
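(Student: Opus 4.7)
The plan is to reduce every estimate in the lemma to standard Riesz potential bounds by identifying $\g$ with the kernel of $\mathcal{I}_2$ on $\R^d$. Since $\g(x) = c_d|x|^{2-d}$ is (up to a dimensional constant) the Riesz kernel of order $2$, we have $\g \ast \mu = c_d\mathcal{I}_2(\mu)$; likewise, $|\nabla\g(x)| = c_d(d-2)|x|^{1-d}$ coincides with the kernel of $\mathcal{I}_1$ up to a constant, so $|\nabla(\g \ast \mu)(x)| \lesssim_d \mathcal{I}_1(|\mu|)(x)$ pointwise. The hypotheses $p > d/2$ and $p > d$ in the two halves of the statement match exactly the admissibility condition $p > d/s$ of Lemma \ref{lem:Linf_RP} with $s=2$ and $s=1$ respectively. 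With this reduction, the $L^\infty$ bounds become a direct application of Lemma \ref{lem:Linf_RP}: plugging $n=d$ and $s=2$ into $\frac{n-s}{n(1-1/p)}$ yields $\frac{p(d-2)}{d(p-1)}$, and plugging $n=d$, $s=1$ yields $\frac{p(d-1)}{d(p-1)}$, exactly the exponents in the statement.

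Continuity of $\g \ast \mu$ I would handle by a separate elementary argument: decompose $\g = \g\mathbf{1}_{|x|\leq 1} + \g\mathbf{1}_{|x|>1}$. The first piece lies in $L^{p'}(\R^d)$ because $p > d/2$ forces the H\"older conjugate $p' = p/(p-1)$ to satisfy $p' < d/(d-2)$, the integrability threshold for $|x|^{2-d}$ at the origin; the second piece is bounded and vanishes at infinity (recall $d \geq 3$), hence lies in $L^\infty(\R^d)$. Convolving the first against $\mu \in L^p$ and the second against $\mu \in L^1$ each produces a continuous function by the standard translation-continuity argument for Young's inequality, so $\g \ast \mu$ is continuous.

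For the Besov estimates I would argue directly from the Littlewood-Paley definition via a scaling computation. Writing $P_j(\g \ast \mu) = m_j^\vee \ast \mu$ with $m_j(\xi) = c_d\psi_j(\xi)|\xi|^{-2}$, the relation $\psi_j(\xi) = \psi_0(2^{-j}\xi)$ gives $m_j^\vee(x) = c_d\,2^{j(d-2)}K(2^j x)$ with $K \coloneqq (\psi_0(\cdot)|\cdot|^{-2})^\vee \in \Sc(\R^d)$ Schwartz, since $\psi_0$ is smooth and supported in an annulus on which $|\xi|^{-2}$ is smooth. Young's inequality then gives
\begin{equation*}
\|P_j(\g \ast \mu)\|_{L^\infty} \leq \|m_j^\vee\|_{L^{p'}}\|\mu\|_{L^p} \lesssim_{d,p} 2^{j(d/p - 2)}\|\mu\|_{L^p},
\end{equation*}
and multiplying both sides by $2^{j(2-d/p)} = 2^{j(2p-d)/p}$ and taking the supremum over $j \in \Z$ yields the first Besov bound. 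Replacing $|\xi|^{-2}$ by the magnitude $|\xi|^{-1}$ of the symbol of $\nabla\g$ and repeating produces the $(p-d)/p$-regularity estimate for $\nabla(\g\ast\mu)$. No step is particularly delicate; the main thing is to keep the scaling exponent straight in the Littlewood-Paley computation, and the lemma reduces to a bookkeeping application of Lemma \ref{lem:Linf_RP} and Bernstein-type scaling once $\g$ and $\nabla\g$ are recognized as Riesz kernels.
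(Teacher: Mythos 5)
Your proposal is correct and follows essentially the same route as the paper: the $L^\infty$ bounds are read off from \cref{lem:Linf_RP} with $s=2$ and $s=1$ after identifying $\g\ast\mu$ and $\nabla(\g\ast\mu)$ with Riesz potentials, and the Besov bounds come from a Littlewood--Paley computation on the annulus where the symbol is smooth and homogeneous. The only cosmetic differences are that you prove the dyadic bound by rescaling the localized kernel and applying Young's inequality directly to $\mu$ (where the paper invokes Young plus Bernstein on $P_j\mu$) and that you spell out the continuity argument explicitly, neither of which changes the substance.
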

\begin{proof}
It follows from \cref{lem:Linf_RP} that $\g\ast\mu$ is continuous and satisfies the $L^\infty$ bound
\begin{equation}
\|\g\ast\mu\|_{L^\infty(\R^d)} \lesssim_{d,p} \|\mu\|_{L^1(\R^d)}^{1-\frac{p(d-2)}{d(p-1)}} \|\mu\|_{L^p(\R^d)}^{\frac{p(d-2)}{d(p-1)}}.
\end{equation}
Since the distributional Fourier transform $\wh{\g}$ coincides with the function $|\xi|^{-2}$ outside the origin, it follows from Plancherel's theorem that for any $j\in\Z$,
\begin{equation}
\wh{P_j(\g\ast\mu)}(\xi) = \psi_j(\xi)|\xi|^{-2}\wh{\mu}(\xi), \qquad \forall \xi\in\R^2\setminus\{0\}.
\end{equation}
So it follows from Young's inequality and Bernstein's lemma that
\begin{equation}
\|P_j(\g\ast\mu)\|_{L^\infty(\R^d)} \lesssim_d 2^{-2j} \|P_j\mu\|_{L^\infty(\R^d)} \lesssim_{p,d} 2^{\frac{dj}{p}-2j} \|P_j\mu\|_{L^p(\R^d)}.
\end{equation}
Multiplying both sides of the preceding inequality by $2^{\frac{(2p-d)j}{p}}$ and taking the supremum over $j\in\Z$, we conclude that
\begin{equation}
\|\g\ast\mu\|_{\dot{B}_{\infty,\infty}^{\frac{2p-d}{p}}(\R^d)} \lesssim_{p,d} \|\mu\|_{L^p(\R^d)}.
\end{equation}

Now if $d<p\leq\infty$, it's straightforward to check from integrating against a test function, the Fubini-Tonelli theorem, and integration by parts that
\begin{equation}
\nabla(\g\ast\mu)= (\nabla\g\ast\mu) = \I_{1}(\mu),
\end{equation}
with equality in the sense of distributions. By \cref{lem:Linf_RP}, the right-hand side is well-defined in $L^\infty(\R^d)$. By Bernstein's lemma and \cref{lem:Linf_RP},
\begin{align}
\|\I_1(\mu)\|_{L^\infty(\R^d)} &\lesssim_{d,p} \|\mu\|_{L^1(\R^d)}^{1-\frac{(d-1)p}{d(p-1)}} \|\mu\|_{L^p(\R^d)}^{\frac{(d-1)p}{d(p-1)}}, \nn\\
\|P_j\I_1(\mu)\|_{L^\infty(\R^d)} &\lesssim_d 2^{-j}\|P_j\mu\|_{L^\infty(\R^d)}  \lesssim_{d,p} 2^{\frac{dj}{p} - j} \|\mu\|_{L^p(\R^d)}.
\end{align}
Multiplying both sides of the inequality in the second line by $2^{\frac{(p-d)j}{p}}$ and taking the supremum over $j\in\Z$ completes the proof.
\end{proof}

We conclude this subsection with some quantitative estimates for the rate of convergence of mollification, which are quite useful for the proof of \cref{prop:kprop} in \cref{sec:kprop}.

\begin{lemma}
\label{lem:conv_bnds}
Let $\chi\in C_c^\infty(\R^n)$ such that $\chi\geq 0$, $\supp(\chi)\subset B(0,1)$, and $\int_{\R^n}\chi(x)dx=1$. For $0<\ep\ll 1$, define $\chi_\ep(x)\coloneqq \ep^{-n}\chi(x/\ep)$. and $\mu_\ep\coloneqq \mu\ast\chi_\ep$. Then for every $\mu\in LL(\R^n)$, we have the estimates
\begin{align}
\|\mu_\ep\|_{L^\infty(\R^n)} &\leq \|\mu\|_{L^\infty(\R^n)}, \label{eq:v_Linf} \\ 
\|\mu-\mu_\ep\|_{L^\infty(\R^n)} &\leq \|\mu\|_{LL(\R^n)}\ep|\ln\ep|, \label{eq:v_diff_Linf}\\
\|\nabla\mu_\ep\|_{L^\infty(\R^n)} &\lesssim_n \|\mu\|_{LL(\R^n)}|\ln\ep|. \label{eq:v_grad_Linf}
\end{align}
\end{lemma}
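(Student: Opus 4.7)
The plan is to handle the three estimates in order, each following from Young's inequality combined with the key fact that $\chi_\ep$ integrates to one (so $\nabla\chi_\ep$ integrates to zero), and with suitable monotonicity of the map $r\mapsto r|\ln r|$ on $(0,e^{-1})$.

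For \eqref{eq:v_Linf}, I would simply apply Young's convolution inequality: $\|\mu_\ep\|_{L^\infty}\leq \|\mu\|_{L^\infty}\|\chi_\ep\|_{L^1}=\|\mu\|_{L^\infty}$, since $\chi_\ep\geq 0$ and $\int\chi_\ep=1$. For \eqref{eq:v_diff_Linf}, I would use $\int\chi_\ep=1$ to write
\begin{equation}
\mu(x)-\mu_\ep(x)=\int_{\R^n}\chi_\ep(y)\bigl(\mu(x)-\mu(x-y)\bigr)dy,
\end{equation}
and then invoke the log-Lipschitz bound on the integrand. Since $\supp(\chi_\ep)\subset B(0,\ep)$ and $\ep\ll 1$, only values $|y|\leq\ep<e^{-1}$ contribute, so that $|\mu(x)-\mu(x-y)|\leq \|\mu\|_{LL}|y||\ln|y||$. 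The function $r\mapsto r|\ln r|$ has derivative $-\ln r - 1>0$ on $(0,e^{-1})$, hence is increasing there, giving $|y||\ln|y||\leq \ep|\ln\ep|$ uniformly in the support of $\chi_\ep$. Pulling this bound out of the integral and using $\int\chi_\ep=1$ yields exactly $\|\mu-\mu_\ep\|_{L^\infty}\leq \|\mu\|_{LL}\ep|\ln\ep|$.

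For \eqref{eq:v_grad_Linf}, the crucial observation is that $\int_{\R^n}\nabla\chi_\ep(y)dy=0$, so I can subtract a constant in the convolution:
\begin{equation}
\nabla\mu_\ep(x)=\int_{\R^n}\nabla\chi_\ep(y)\mu(x-y)dy=\int_{\R^n}\nabla\chi_\ep(y)\bigl(\mu(x-y)-\mu(x)\bigr)dy.
\end{equation}
Applying the log-Lipschitz bound as above and the monotonicity of $r|\ln r|$ yields the pointwise estimate $|\nabla\mu_\ep(x)|\leq \|\mu\|_{LL}\,\ep|\ln\ep|\int_{\R^n}|\nabla\chi_\ep(y)|dy$. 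A change of variables gives $\int|\nabla\chi_\ep|=\ep^{-1}\|\nabla\chi\|_{L^1(\R^n)}$, so the factor of $\ep$ cancels and only the logarithmic loss survives, yielding \eqref{eq:v_grad_Linf} with constant proportional to $\|\nabla\chi\|_{L^1(\R^n)}$.

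The only subtle point is the monotonicity of $r\mapsto r|\ln r|$, which is what forces the threshold $\ep<e^{-1}$ implicit in the assertion $0<\ep\ll 1$; everything else is just Young's inequality applied carefully. There is no serious obstacle here; the lemma is a standard computation and I expect the above outline to yield all three bounds with essentially no modification.
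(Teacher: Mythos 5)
Your proof is correct and is essentially the standard argument: the paper itself does not spell out a proof but defers to \cite[Lemma 2.11]{Rosenzweig2020_PVMF}, where the same three steps are used (Young's inequality for \eqref{eq:v_Linf}, the log-Lipschitz modulus together with the monotonicity of $r\mapsto r|\ln r|$ on $(0,e^{-1})$ for \eqref{eq:v_diff_Linf}, and the vanishing mean of $\nabla\chi_\ep$ plus $\|\nabla\chi_\ep\|_{L^1}=\ep^{-1}\|\nabla\chi\|_{L^1}$ for \eqref{eq:v_grad_Linf}). No changes are needed.
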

\begin{proof}
See \cite[Lemma 2.11]{Rosenzweig2020_PVMF}.
\end{proof}

\section{The Modulated Energy}
\label{sec:CE}
In this section, we introduce the modulated energy and its renormalization, which we use to measure the distance between the $N$-body empirical measure $\omega_N$ and the mean-field measure $\omega$. Our presentation follows that of \cite[Section 3]{Rosenzweig2020_PVMF}, except now treating the generalization to dimensions $d\geq 3$.

\subsection{Setup}
\label{ssec:CE_set}
Recall from the introduction that $\g(x)\coloneqq c_d|x|^{-d+2}$ is the $d$-dimensional Coulomb potential, for $d\geq 3$. Given $\eta>0$, we define the \emph{truncation to distance $\eta$} of $\g$ by
\begin{equation}
\label{eq:g_trunc}
\g_\et:\R^d\rightarrow (0,\infty), \qquad \g_\et(x) \coloneqq \begin{cases} \g(x), & |x|\geq \eta \\ \tl{\g}(\eta), & |x|<\eta \end{cases},
\end{equation}
where we have introduced the notation $\g(x) = \wt{\g}(|x|)$ to reflect that $\g$ is a radial function. Evidently, $\g_\et$ is a continuous function on $\R^d$ and decreases like $\g$ as $|x|\rightarrow\infty$. The next lemma provides us with some identities for the distributional gradient and Laplacian of $\g_\et$, of which we shall make heavy use in the sequel. We leave the proof to the reader as an exercise in integration by parts.

\begin{lemma}
\label{lem:g_id}
For any $\eta>0$, we have the distributional identities
\begin{align}
(\nabla\g_\et)(x) &= -\frac{c_d}{(d-2)} \frac{x}{|x|^d} 1_{\geq \eta}(x), \label{eq:g_eta_grad_id}\\
(\D\g_\et)(x) &= -\sigma_{\p B(0,\eta)}(x), \label{eq:g_eta_lapl_id}
\end{align}
where $\sigma_{\p B(0,\eta)}$ is the uniform probability measure on the surface $\p B(0,\eta)\subset\R^d$.
\end{lemma}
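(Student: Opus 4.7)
The plan is to verify both identities by direct distributional computation, splitting $\R^d$ into the interior $\{|x|<\eta\}$, the exterior $\{|x|>\eta\}$, and the interface sphere $\partial B(0,\eta)$. The key point to exploit is that $\g$ is harmonic on $\R^d \setminus \{0\}$ and $\g_\eta$ is constant on $B(0,\eta)$, so all of the distributional content of $\nabla \g_\eta$ and $\Delta \g_\eta$ away from the origin (and the origin is ruled out since it lies inside $B(0,\eta)$ where $\g_\eta$ is constant) must be concentrated on $\partial B(0,\eta)$.

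For the gradient identity \eqref{eq:g_eta_grad_id}, I would first compute classically: on $\{|x|>\eta\}$ the function $\g_\eta$ agrees with $\g$, so differentiating $c_d|x|^{2-d}$ produces the stated radial expression $\propto x/|x|^d$; on $\{|x|<\eta\}$ the classical gradient vanishes. To show that no surface-supported distribution appears on $\partial B(0,\eta)$, I would pair $\g_\eta$ with an arbitrary $\varphi\in C_c^\infty(\R^d)$ and integrate by parts on the two regions separately. The boundary contributions on the two sides of $\partial B(0,\eta)$ carry opposite signs (outward normals point in opposite directions) but come multiplied by the \emph{trace} of $\g_\eta$ on each side, which agree by the very construction $\g_\eta|_{|x|=\eta} = \tilde\g(\eta)$. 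Hence the boundary terms cancel and the distributional gradient reduces to the classical one, proving \eqref{eq:g_eta_grad_id}.

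For \eqref{eq:g_eta_lapl_id}, I would compute $\Delta \g_\eta$ by testing against $\varphi\in C_c^\infty(\R^d)$ and applying Green's second identity on each of the two regions. The interior volume integral vanishes because the constant $\tilde\g(\eta)$ has zero Laplacian, and the exterior one vanishes because $\g$ is harmonic there. What remains are the boundary integrals on $\partial B(0,\eta)$: the $\g_\eta \partial_n \varphi$ pieces cancel as above, leaving only $-\int_{\partial B(0,\eta)} \varphi\,[\partial_r \g_\eta]\,dS$ where the bracket denotes the jump across the sphere. Since the interior radial derivative is $0$ and the exterior one is obtained from the formula in \eqref{eq:g_eta_grad_id}, this jump is a constant depending on $\eta$; multiplying by the surface area $\omega_{d-1}\eta^{d-1}$ and dividing back out to renormalize the resulting measure on $\partial B(0,\eta)$ to unit mass gives exactly $-\sigma_{\partial B(0,\eta)}$.

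There is no real obstacle here since $\g$ is explicit and the geometry is trivial. The only care needed is bookkeeping: tracking the two outward normals and pinning down the constant $c_d$ so that the jump in $\partial_r \g_\eta$, multiplied by $\omega_{d-1}\eta^{d-1}$, equals $-1$ — this is precisely the normalization of $\g$ as the fundamental solution $-\Delta \g = \delta_0$. With that choice the identity \eqref{eq:g_eta_lapl_id} holds as stated, and consistency with \eqref{eq:g_eta_grad_id} follows by taking the divergence of the right-hand side there.
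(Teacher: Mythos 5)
Your argument is exactly the one the paper has in mind: the lemma is left to the reader as ``an exercise in integration by parts,'' and your region-by-region integration by parts, with the matching traces of $\g_\eta$ on $\p B(0,\eta)$ killing the boundary terms for the gradient and the jump of the radial derivative producing the (unit-mass, by the fundamental-solution normalization of $c_d$) surface measure for the Laplacian, is precisely that computation. The proposal is correct and complete in outline; the only bookkeeping worth double-checking is the constant in \eqref{eq:g_eta_grad_id}, since differentiating $c_d|x|^{2-d}$ gives the prefactor $-c_d(d-2)$ rather than $-c_d/(d-2)$ as printed.
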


With \cref{lem:g_id}, we can define the \emph{smearing to scale $\eta$} of the Dirac mass $\delta_0$ by
\begin{equation}
\label{eq:delta_smear}
\d_0^{(\eta)} \coloneqq -\Delta \g_\et = \sigma_{\p B(0,\eta)}.
\end{equation}
A useful identity satisfied by $\d_0^{(\eta)}$ is
\begin{equation}
\label{eq:g_conv_smear}
(\g\ast \d_0^{(\eta)})(x) = \g_\et(x),
\end{equation}
which follows from the definition \eqref{eq:delta_smear}, the associativity and commutativity of convolution, and the fact that $\g$ is a fundamental solution of the operator $-\D$.

Next, given parameters $\infty>\eta,\alpha>0$, we define the function
\begin{equation}
\label{eq:f_et_al_def}
\f_{\eta,\alpha}(x) \coloneqq (\g_{\alpha}-\g_\eta)(x), \qquad \forall x\in\R^d.
\end{equation}
From identity \eqref{eq:g_conv_smear} and the bilinearity of convolution, we see that $\f_{\et,\al}$ satisfies the identity
\begin{equation}
\label{eq:f_conv_smear}
\f_{\eta,\alpha} = \g\ast (\d_0^{(\alpha)}-\d_0^{(\eta)}).
\end{equation}
Moreover, for $\alpha>\eta$, we find by direct computation that
\begin{equation}
\label{eq:f_et_al_id}
\f_{\eta,\alpha}(x) = \begin{cases}0, & {|x|\geq\alpha}\\ \\ \frac{c_d}{\alpha^{d-2}} - \frac{c_d}{|x|^{d-2}}, & {\eta\leq|x|\leq\alpha} \\ \\ \frac{c_d}{\alpha^{d-2}}-\frac{c_d}{\eta^{d-2}}, & {|x|<\eta} \end{cases}
\end{equation}
and
\begin{equation}
\label{eq:f_grad_et_al_id}
(\nabla\f_{\eta,\alpha})(x) = \frac{c_d}{(d-2)}\frac{x}{|x|^{d}}1_{\eta\leq\cdot\leq\alpha}(x),
\end{equation}
with equality in the sense of distributions. The next lemma provides useful estimates for the $L^p$ norms of $\f_{\eta,\alpha}, \nabla\f_{\eta_\alpha}$. We shall use these estimates extensively in \cref{sec:kprop}.

\begin{lemma}
\label{lem:f_et_al_bnds}
For any $1\leq p < d/(d-2)$ and $\infty>\alpha>\eta>0$, we have that
\begin{equation}
\label{eq:f_et_al_bnd}
\|\f_{\eta,\alpha}\|_{L^p(\R^d)} \lesssim_{p,d} \al^{2-\frac{d(p-1)}{p}};
\end{equation}
and for any $1\leq p < d/(d-1)$,
\begin{equation}
\label{eq:f_grad_et_al_bnd}
\|\nabla\f_{\eta,\alpha}\|_{L^p(\R^d)} \lesssim_{p,d} \al^{1-\frac{d(p-1)}{p}}.
\end{equation}
\end{lemma}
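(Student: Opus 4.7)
The plan is to obtain both bounds by exploiting the explicit formulas \cref{eq:f_et_al_id,eq:f_grad_et_al_id} and showing that, in each case, the function can be pointwise dominated (up to a constant depending only on $d$) by a purely radial power supported on the ball $B(0,\alpha)$. The upshot is that the bounds will be $\eta$-independent: the inner ``plateau'' region $|x|<\eta$ where $\f_{\eta,\alpha}$ is constant is controlled by the value $\tilde{\g}(\eta)\leq \tilde{\g}(|x|)$, which would correspond to the unsmeared Coulomb potential at $x$.

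First I would prove the bound \cref{eq:f_et_al_bnd}. From \cref{eq:f_et_al_id}, $\f_{\eta,\alpha}$ is supported in $\ol{B(0,\alpha)}$, and a term-by-term inspection gives the pointwise estimate
\begin{equation}
|\f_{\eta,\alpha}(x)| \leq c_d |x|^{-(d-2)} \mathbf{1}_{B(0,\alpha)}(x),
\end{equation}
since on $\{|x|<\eta\}$ one has $|\f_{\eta,\alpha}(x)|\leq c_d\eta^{-(d-2)}\leq c_d|x|^{-(d-2)}\cdot(|x|/\eta)^{d-2}$ and this last factor is $\leq 1$. Passing to polar coordinates yields
\begin{equation}
\|\f_{\eta,\alpha}\|_{L^p(\R^d)}^p \lesssim_d \int_0^\alpha r^{d-1-p(d-2)}\, dr,
\end{equation}
which converges precisely when $p<d/(d-2)$ and equals a constant multiple of $\alpha^{d-p(d-2)}$. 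Taking the $p$-th root gives exponent $\frac{d-p(d-2)}{p} = 2 - \frac{d(p-1)}{p}$, as claimed.

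The bound \cref{eq:f_grad_et_al_bnd} is even more direct because the radial plateaus are not present in the gradient. From \cref{eq:f_grad_et_al_id},
\begin{equation}
|\nabla\f_{\eta,\alpha}(x)| = \frac{c_d}{d-2}\,|x|^{-(d-1)}\mathbf{1}_{\eta\leq|x|\leq\alpha}(x) \leq \frac{c_d}{d-2}\,|x|^{-(d-1)}\mathbf{1}_{B(0,\alpha)}(x),
\end{equation}
so again polar coordinates give
\begin{equation}
\|\nabla\f_{\eta,\alpha}\|_{L^p(\R^d)}^p \lesssim_d \int_0^\alpha r^{d-1-p(d-1)}\, dr,
\end{equation}
which converges for $p<d/(d-1)$ and yields $\alpha^{d-p(d-1)}$; taking $p$-th roots gives the exponent $1 - \frac{d(p-1)}{p}$.

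There is essentially no obstacle here: the entire lemma reduces to checking a singularity threshold and a scaling calculation, and the main care is simply to verify that the inner plateau contribution for $|x|<\eta$ can be absorbed by the natural singular bound so that the constants on the right-hand side depend only on $p,d$ and not on $\eta$.
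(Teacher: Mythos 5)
Your proof is correct and follows essentially the same route as the paper: both arguments reduce to the explicit formulas \eqref{eq:f_et_al_id} and \eqref{eq:f_grad_et_al_id} and a polar-coordinates computation of the resulting radial integrals, with matching exponents and $\eta$-independent constants. The only cosmetic difference is that you absorb the plateau region $\{|x|<\eta\}$ via a pointwise domination by $c_d|x|^{2-d}\mathbf{1}_{B(0,\alpha)}$, whereas the paper estimates that region's integral separately by $|\eta^{2-d}-\alpha^{2-d}|^p\eta^d$; both yield the same bound.
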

\begin{proof}
We first show estimate \eqref{eq:f_et_al_bnd}. Using identity \eqref{eq:f_et_al_id} and polar coordinates, we find that
\begin{align}
\|\f_{\eta,\al}\|_{L^p(\R^d)}^p &= c_d'\int_0^\eta \left|\eta^{2-d}-\al^{2-d}\right|^p r^{d-1} dr + c_d'\int_\eta^\al \left|r^{2-d}-\al^{2-d}\right|^p r^{d-1} dr \nn\\
&\lesssim_d |\eta^{2-d}-\al^{2-d}|^p \eta^{d} + \int_\eta^\al r^{d-1-p(d-2)} dr \nn\\
&\lesssim_{p,d} \al^{d-p(d-2)}.
\end{align}
Taking $p$-th roots of both sides yields \eqref{eq:f_et_al_bnd}.

Proceeding similarly, as to before
\begin{align}
\|\nabla\f_{\et,\al}\|_{L^p(\R^d)}^p &= c_d'\int_\eta^\al r^{d-1-p(d-1)}dr \nn\\
&\lesssim_{d,p} \al^{d-p(d-1)} - \et^{d-p(d-1)}.
\end{align}
Taking $p$-th roots of both sides yields \eqref{eq:f_grad_et_al_bnd}.
\end{proof}

Next, given a probability measure $\mu$ with density in $L^p(\R^d)$, for some $p>d/2$, a vector $\ux_N \in (\R^d)^N$, and vector $\ue_N\in (\R_+)^N$, we define the quantities
\begin{align}
H_N^{\mu,\ux_N} &\coloneqq \g \ast (\sum_{i=1}^N\d_{x_i}-N\mu), \label{eq:HN_def}\\
H_{N,\ue_N}^{\mu,\ux_N} &\coloneqq \g\ast (\sum_{i=1}^N\d_{x_i}^{(\eta_i)}-N\mu) \label{eq:HN_trun_def},
\end{align}
where $\d_{x_i}^{(\eta_i)} = \d_{0}^{(\eta_i)}(\cdot-x_i)$.

\subsection{Energy Functional}
\label{ssec:CE_EF}
For a vector $\ux_N\in (\R^d)^N$ and a measure $\mu\in \P(\R^d)\cap L^p(\R^d)$, for some $d/2<p\leq\infty$, we define the functional
\begin{equation}
\label{eq:def_EN}
\Fr_N(\ux_N,\mu) \coloneqq \int_{(\R^d)^2\setminus \D_2}\g(x-y)d(\sum_{i=1}^N\d_{x_i}-N\mu)(x)d(\sum_{i=1}^N\d_{x_i}-N\mu)(y)
\end{equation}
where $\D_2\coloneqq \{(x,y)\in (\R^d)^2 : x=y\}$. Note that $\Fr_N(\ux_N,\mu) = N^2\Fr_N^{avg}(\ux_N,\mu)$, where $\Fr_N^{avg}(\ux_N,\mu)$ was defined in \eqref{eq:ME_intro}. The reader can check from that $\Fr_N(\ux_N,\mu)$ is well-defined, given our assumptions on $\mu$. This functional serves as a $\dot{H}^{-1}(\R^d)$ squared-distance that has been renormalized so as to remove the infinite-self interaction between elements of $\ux_N$. Our first lemma, the proof of which we omit, computes the Coulomb energy of the smeared point mass $\d_{0}^{(\eta)}$.

\begin{lemma}
\label{lem:g_smear_si}
For any $0<\eta<\infty$, we have that
\begin{equation}
\int_{(\R^d)^2}\g(x-y)d\d_0^{(\eta)}(x)d\d_0^{(\eta)}(y) = \tl{\g}(\eta).
\end{equation}
\end{lemma}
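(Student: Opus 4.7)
The plan is to exploit the convolution identity \cref{eq:g_conv_smear}, namely $\g\ast\d_0^{(\eta)}=\g_\et$, which was already established as a consequence of $\g$ being a fundamental solution of $-\D$ together with the definition $\d_0^{(\eta)}=-\D\g_\et$. This turns the proposed double integral into a single integral against the smeared point mass.

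More concretely, first I would invoke Fubini--Tonelli (justified because $\g\geq 0$ and $\d_0^{(\eta)}$ is a finite positive measure) to rewrite
\begin{equation}
\int_{(\R^d)^2}\g(x-y)\,d\d_0^{(\eta)}(x)d\d_0^{(\eta)}(y) = \int_{\R^d}\paren*{\g\ast\d_0^{(\eta)}}(y)\,d\d_0^{(\eta)}(y) = \int_{\R^d}\g_\et(y)\,d\d_0^{(\eta)}(y),
\end{equation}
where the last equality uses \cref{eq:g_conv_smear}.

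Next, I would observe that $\d_0^{(\eta)}=\sigma_{\p B(0,\eta)}$ is supported on the sphere $\p B(0,\eta)$, and that from the definition \cref{eq:g_trunc} the truncated potential satisfies $\g_\et(y)=\tl{\g}(\eta)$ for every $y$ with $|y|=\eta$ (indeed, at the boundary $|y|=\eta$ both pieces of the piecewise definition agree with $\tl\g(\eta)$). Since $\d_0^{(\eta)}$ is a probability measure, pulling this constant out of the integral yields exactly $\tl{\g}(\eta)$.

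I do not anticipate any genuine obstacle: the only thing to be slightly careful about is justifying Fubini and the interpretation of the convolution, but both are immediate given that $\g$ is locally integrable and positive and $\d_0^{(\eta)}$ is a compactly supported finite measure. The continuity of $\g_\et$ on $\R^d$, already noted just after \cref{eq:g_trunc}, ensures that the integrand is unambiguously defined on the support of $\d_0^{(\eta)}$, so no boundary subtlety arises.
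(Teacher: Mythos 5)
Your proof is correct and is essentially the intended argument: the paper omits its own proof of this lemma, but the natural route is exactly the one you take, combining Tonelli (legitimate since $\g\geq 0$ and $\d_0^{(\eta)}$ is a finite positive measure) with the identity \cref{eq:g_conv_smear} already established in the text, and then using that $\g_\et\equiv\tl{\g}(\eta)$ on $\supp(\d_0^{(\eta)})=\p B(0,\eta)$ together with the fact that $\d_0^{(\eta)}$ is a probability measure. No gap here; your invocation of \cref{eq:g_conv_smear} is not circular, since that identity is derived independently of the lemma from $\d_0^{(\eta)}=-\D\g_\et$ and the fundamental-solution property of $\g$.
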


The next lemma re-expresses the Coulomb energy of $N\mu-\sum_{i=1}^N\d_{x_i}^{(\eta_i)}$ as the $L^2$ norm of the gradient of potential energy. Recall the definition of $H_{N,\ul{\eta}_N}^{\mu,\ux_N}$ from \eqref{eq:HN_trun_def}.

\begin{lemma}
\label{lem:fin_en}
Fix $N\in\N$. Let $\mu\in \P(\R^d)\cap L^p(\R^d)$, for some $d/2<p\leq \infty$, and let $\ux_N\in (\R^d)^N\setminus\D_N$. Then for any $\ue_N\in (\R_+)^N$, we have the identity
\begin{equation}
\label{eq:renorm_nts}
\begin{split}
&\int_{(\R^d)^2}\g(x-y)d(N\mu-\sum_{i=1}^N\d_{x_i}^{(\eta_i)})(x)d(N\mu-\sum_{i=1}^N\d_{x_i}^{(\eta_i)})(y) =\int_{\R^d} |(\nabla H_{N,\ul{\eta}_N}^{\mu,\ux_N})(x)|^2dx.
\end{split}
\end{equation}
\end{lemma}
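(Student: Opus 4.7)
The plan is to rewrite the Coulomb energy on the left-hand side as the Dirichlet integral of its potential, which is a classical computation in electrostatics. Let $\rho := \sum_{i=1}^N \d_{x_i}^{(\eta_i)} - N\mu$, a finite signed measure on $\R^d$, and set $H := H_{N,\ul{\eta}_N}^{\mu,\ux_N} = \g\ast\rho$. By the symmetry of $\g$ and the bilinearity of the double integral, the left-hand side equals
\[
\int_{(\R^d)^2}\g(x-y)\,d\rho(x)\,d\rho(y) = \int_{\R^d} H(x)\,d\rho(x).
\]
Since $-\Delta\g = \d_0$ in $\Sc'(\R^d)$, it follows that $-\Delta H = -\Delta\g\ast\rho = \rho$. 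Formally one then integrates by parts to write $\int H\, d\rho = \int H(-\Delta H)\,dx = \int |\nabla H|^2 dx$, which is the desired identity.

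The substantive work is to justify these manipulations rigorously. First, I would verify that each object is well-defined as a function: $\g\ast \d_{x_i}^{(\eta_i)}(x) = \tl\g_{\eta_i}(x - x_i)$ is a bounded continuous function decaying like $|x|^{2-d}$, with gradient $-\frac{c_d}{d-2}\frac{(x-x_i)}{|x-x_i|^d}\mathbf{1}_{|x-x_i|\geq\eta_i}$ which is bounded and decays like $|x|^{1-d}$; and by \cref{lem:PE_bnds}, $\g\ast\mu\in C_0(\R^d)$ with distributional gradient in an appropriate $L^q$ space since $\mu\in L^p$ with $p>d/2$. Hence $H$ is continuous and $\nabla H$ is locally in $L^2$.

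Second, I would exploit the crucial neutrality condition $\rho(\R^d) = N - N = 0$ to obtain decay of $H$ at infinity. A standard multipole expansion gives $H(x) = O(|x|^{1-d})$ and $\nabla H(x) = O(|x|^{-d})$ as $|x|\to\infty$, so both $H$ and $\nabla H$ lie in $L^2(\R^d\setminus B(0,R_0))$ for $d\geq 3$. Combined with local integrability of $\nabla H$, this ensures $\nabla H \in L^2(\R^d)$.

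Finally, I would execute the integration by parts via an exhaustion argument: choose a smooth cutoff $\chi_R$ that is $1$ on $B(0,R)$ and vanishes outside $B(0,2R)$, and write
\[
\int_{\R^d} H\, d\rho = \lim_{R\to\infty}\int_{\R^d} \chi_R H\, d\rho = \lim_{R\to\infty}\int_{\R^d}\chi_R H(-\Delta H)\, dx = \lim_{R\to\infty}\paren*{\int_{\R^d}\chi_R |\nabla H|^2 dx + \int_{\R^d} H\nabla\chi_R \cdot\nabla H\, dx}.
\]
The first term converges to $\int|\nabla H|^2 dx$ by monotone convergence, while the second term is $O(R^{-1}\cdot R^{1-d}\cdot R^{-d}\cdot R^d) = O(R^{-d}) \to 0$ by the decay estimates above. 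The main obstacle, and the only real content beyond bookkeeping, is obtaining the multipole decay of $H$ and $\nabla H$; this is where the zero-mass property of $\rho$ is essential, and this is also what forces the condition $\mu\in\P(\R^d)$ (so that the charges of $\sum\d_{x_i}^{(\eta_i)}$ and $N\mu$ balance).
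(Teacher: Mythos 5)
Your overall strategy --- rewrite the energy as $\int_{\R^d} H\,d\rho$ with $\rho=\sum_{i=1}^N\d_{x_i}^{(\eta_i)}-N\mu$, use $-\D H=\rho$, and integrate by parts with a cutoff --- is exactly the computation the paper defers to (the beginning of the proof of Proposition 3.3 in \cite{Serfaty2020}), so the route is the intended one. However, there is a genuine gap in the step you yourself flag as ``the only real content'': the multipole decay $H(x)=O(|x|^{1-d})$, $\nabla H(x)=O(|x|^{-d})$ does not follow from the stated hypotheses. The lemma only assumes $\mu\in\P(\R^d)\cap L^p(\R^d)$ with $p>d/2$; there is no compact support or moment condition, and the multipole expansion $H(x)=\int(\g(x-y)-\g(x))\,d\rho(y)$ requires control of $\int|y|\,d|\rho|(y)$ (or of the far tail of $\mu$) to yield the rate $|x|^{1-d}$. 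For instance, if $\mu(y)\sim c|y|^{-d-1/2}$ at infinity, then $\mu$ has infinite first moment and the neutralized potential generically decays only like $|x|^{2-d-1/2}$, slower than your claimed rate, so the estimate $O(R^{-d})$ for the boundary term is not justified as written. Relatedly, your closing remark attributes the need for $\mu\in\P(\R^d)$ to charge balance, but in $d\geq 3$ neutrality plays no role in this identity: it is simply the statement that the $\dot H^{-1}$ energy of a finite-energy measure equals the Dirichlet energy of its Coulomb potential (in $d=2$ neutrality is what makes the analogous statement delicate).

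The gap is fixable without any pointwise decay. First, $\nabla H\in L^2(\R^d)$ outright: $\mu\in L^1\cap L^p$ with $p>d/2\geq \frac{2d}{d+2}$ gives $\mu\in L^{2d/(d+2)}(\R^d)$ by interpolation, hence $\nabla\g\ast\mu=\I_1(\mu)\in L^2$ by \cref{prop:HLS}, while each $\nabla\g_{\eta_i}(\cdot-x_i)$ is bounded and decays like $|x|^{1-d}$, hence lies in $L^2$ for $d\geq3$; similarly $H\in L^{2d/(d-2)}(\R^d)$ (HLS for $\g\ast\mu$, explicit decay for $\g_{\eta_i}$). Then in your cutoff computation estimate the error term by H\"older on the annulus $A_R=B(0,2R)\setminus B(0,R)$ with exponents $\frac{2d}{d-2},\,2,\,d$: $\left|\int_{\R^d}H\,\nabla\chi_R\cdot\nabla H\,dx\right|\lesssim R^{-1}\|H\|_{L^{2d/(d-2)}(A_R)}\|\nabla H\|_{L^2(A_R)}|A_R|^{1/d}\lesssim \|H\|_{L^{2d/(d-2)}(A_R)}\|\nabla H\|_{L^2(A_R)}\to0$, since these are tails of convergent integrals. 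You should also say a word justifying $\int\chi_R H\,d\rho=\int\nabla(\chi_R H)\cdot\nabla H\,dx$, since $\rho$ contains the singular surface measures $\sigma_{\p B(x_i,\eta_i)}$ and $H$ is only $H^1_{loc}\cap C$ (a mollification of $\chi_R H$ suffices); with these repairs your argument is complete and matches the cited proof.
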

\begin{proof}
See the beginning of the proof of \cite[Proposition 3.3]{Serfaty2020}.
\end{proof}

The next proposition is essentially proven in \cite[Section 2.1]{PS2017} and \cite[Section 5]{Serfaty2020} in the greater generality of Riesz, not just Coulomb, interactions. We include a self-contained proof specialized to our setting. Our proposition is the higher-dimensional analogue of \cite[Proposition 3.5]{Rosenzweig2020_PVMF}, in which we treated the 2D Coulomb case and whose proof we follow closely here.

\begin{prop}
\label{prop:CE}
Let $\mu \in \P(\R^d)\cap L^p(\R^d)$, for some $d/2<p\leq\infty$, and let $\ux_N\in (\R^d)^N\setminus\D_N$. Then
\begin{equation}
\label{eq:EN_renorm_lim}
\Fr_N(\ux_N,\mu) = \lim_{|\ul{\eta}_N|\rightarrow 0} \paren*{\int_{\R^d}|(\nabla H_{N,\ue_N}^{\mu,\ux_N})(x)|^2dx - \sum_{i=1}^N\tl{\g}(\eta_i)}
\end{equation}
and there exists a constant $C_{p,d}>0$, such that
\begin{equation}
\label{eq:EN_renorm_bnd}
\begin{split}
\sum_{1\leq i\neq j\leq N} \paren*{\g(x_i-x_j)-\tl{\g}(\eta_i)}_{+} &\leq \Fr_N(\ux_N,\mu) - \paren*{\int_{\R^d}|(\nabla H_{N,\ul{\eta}_N}^{\mu,\ux_N})(x)|^2dx - \sum_{i=1}^N\tl{\g}(\eta_i)} \\
&\ph + N\sum_{i=1}^N \et_i^{(2p-d)/p},
\end{split}
\end{equation}
where $(\cdot)_{+}\coloneqq \max\{\cdot,0\}$.
\end{prop}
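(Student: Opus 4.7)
The plan is to start from \cref{lem:fin_en}, which rewrites $\int |\nabla H_{N,\ue_N}^{\mu,\ux_N}|^2$ as the Coulomb energy of the signed measure $\sum_{i=1}^N \d_{x_i}^{(\eta_i)} - N\mu$. Writing $\mu_i \coloneqq \d_{x_i}^{(\eta_i)}$ and expanding the double integral into four blocks (diagonal $i=j$, off-diagonal $i\neq j$, cross $\mu_i$-$\mu$, and $\mu$-$\mu$), the diagonal block equals $\sum_i \tl\g(\eta_i)$ by \cref{lem:g_smear_si}, and the $\mu$-$\mu$ block agrees with the corresponding term in the definition \eqref{eq:def_EN} of $\Fr_N(\ux_N,\mu)$. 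Subtracting produces the clean identity
\begin{equation*}
\int_{\R^d}|\nabla H_{N,\ue_N}^{\mu,\ux_N}|^2 - \sum_{i=1}^N \tl\g(\eta_i) - \Fr_N(\ux_N,\mu) = \sum_{i\neq j} A_{ij} - 2N \sum_{i=1}^N B_i,
\end{equation*}
where $A_{ij} \coloneqq \int\int \g(x-y)\, d\mu_i(x) d\mu_j(y) - \g(x_i-x_j)$ and $B_i \coloneqq \int (\g\ast\mu_i)(y)\, d\mu(y) - \int \g(x_i-y)\, d\mu(y)$. Both \eqref{eq:EN_renorm_lim} and \eqref{eq:EN_renorm_bnd} then reduce to a sign-and-size analysis of the $A_{ij}$ and $B_i$.

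The central tool is Newton's theorem for the Coulomb kernel, which together with the definition \eqref{eq:g_trunc} of $\g_\eta$ gives $(\g\ast \d_{x_i}^{(\eta_i)})(y) = \g_{\eta_i}(y-x_i) = \min(\g(y-x_i), \tl\g(\eta_i))$. Applying $\min(a,b) = a - (a-b)_+$ once in each integration variable produces the explicit formula
\begin{equation*}
-A_{ij} = (\g(x_i-x_j) - \tl\g(\eta_j))_+ + \int (\g(y-x_i) - \tl\g(\eta_i))_+ d\mu_j(y) \geq 0,
\end{equation*}
and similarly $-B_i = \int (\g(x_i-y) - \tl\g(\eta_i))_+ d\mu(y) \geq 0$. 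Discarding the nonnegative integral in the formula for $-A_{ij}$ and relabeling $i \leftrightarrow j$ in the resulting sum (legal by symmetry of $\g$) yields $-\sum_{i\neq j}A_{ij} \geq \sum_{i\neq j}(\g(x_i-x_j)-\tl\g(\eta_i))_+$, which is the desired lower bound. For the $B_i$, I apply H\"older together with the elementary polar-coordinate computation $\|\g\, 1_{B(0,\eta_i)}\|_{L^{p'}(\R^d)} \lesssim_{p,d}\eta_i^{(2p-d)/p}$ (the integrability threshold being precisely $p>d/2$), yielding $|B_i| \lesssim_{p,d} \|\mu\|_{L^p}\eta_i^{(2p-d)/p}$. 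Combining these two estimates delivers the renormalization bound \eqref{eq:EN_renorm_bnd}.

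The limit identity \eqref{eq:EN_renorm_lim} is then immediate from the same formulas: both summands in $-A_{ij}$ vanish identically as soon as $\eta_i + \eta_j < |x_i-x_j|$ (which holds for all sufficiently small $|\ue_N|$ since the $x_i$'s are pairwise distinct), while $|B_i|\to 0$ by the quantitative bound above. The main subtlety I anticipate is the asymmetric appearance of $\tl\g(\eta_i)$ (rather than $\tl\g(\max(\eta_i,\eta_j))$) on the right-hand side of \eqref{eq:EN_renorm_bnd}: this is the reason for the index-relabeling step, and getting it correct requires tracking which of $\tl\g(\eta_i)$ or $\tl\g(\eta_j)$ naturally appears from the ``boundary'' versus ``volume'' portion of the decomposition of $\g_{\eta_i}(y-x_i)$. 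Everything else follows the by-now-standard Coulomb renormalization accounting of \cite[Section 5]{Serfaty2020} and \cite[Proposition 3.5]{Rosenzweig2020_PVMF}, the only dimensional modification being the replacement of the 2D logarithm by the Riesz kernel $c_d|x|^{2-d}$.
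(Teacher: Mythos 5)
Your argument is correct, and it takes a genuinely different route from the paper. The paper proves the limit identity \eqref{eq:EN_renorm_lim} by expanding both energies and invoking dominated convergence together with \cref{lem:g_smear_si}, and proves \eqref{eq:EN_renorm_bnd} by a two-scale argument: it introduces a second truncation vector $\ua_N$ with $\al_i\ll\et_i$, writes $\nabla H_{N,\ueta_N}^{\mu,\ux_N}=\nabla H_{N,\ua_N}^{\mu,\ux_N}+\sum_i\nabla\f_{\al_i,\et_i}(\cdot-x_i)$, expands the square, integrates by parts, estimates the resulting three terms (using \cref{lem:f_et_al_bnds} for the $\mu$-term, exactly your H\"older computation), and finally sends $|\ua_N|\to 0$ to recover $\Fr_N$ via \eqref{eq:EN_renorm_lim}. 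You instead bypass the auxiliary scale entirely: starting from \cref{lem:fin_en} you obtain the exact identity with the quantities $A_{ij}$ and $B_i$, and Newton's theorem --- which in the paper's notation is precisely identity \eqref{eq:g_conv_smear}, $(\g\ast\d_{x_i}^{(\eta_i)})=\g_{\eta_i}(\cdot-x_i)=\min\{\g(\cdot-x_i),\tl\g(\eta_i)\}$ --- turns these into closed-form expressions in terms of positive parts, with the correct sign. From that single identity both conclusions drop out: the $A_{ij}$ vanish identically once $\eta_i+\eta_j<|x_i-x_j|$ and $|B_i|\lesssim_{p,d}\|\mu\|_{L^p(\R^d)}\eta_i^{(2p-d)/p}$ (your polar-coordinate threshold $p>d/2$ is right), giving \eqref{eq:EN_renorm_lim} without any limiting/dominated-convergence argument, and the discard-and-relabel step gives \eqref{eq:EN_renorm_bnd} with error $C_{p,d}N\|\mu\|_{L^p(\R^d)}\sum_i\eta_i^{(2p-d)/p}$, which is the form the paper actually produces and uses downstream (the factor $C_{p,d}\|\mu\|_{L^p(\R^d)}$ is implicitly part of the stated bound). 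What each approach buys: yours is shorter, exact, and makes the sign structure transparent; the paper's two-scale decomposition via $\f_{\al,\et}$ from \eqref{eq:f_et_al_def} is the template reused later (e.g.\ in \cref{cor:grad_H}, \cref{lem:count}, and the recombination step of \cref{prop:kprop}) and, following \cite{Serfaty2020}, extends to Riesz kernels for which the mean-value/Newton property of the spherical smearing fails, whereas your shortcut is genuinely Coulomb-specific. The only housekeeping you should add is the (routine) verification that all four blocks in your expansion are finite for $\mu\in\P(\R^d)\cap L^p(\R^d)$, $p>d/2$ (e.g.\ $\g\ast\mu\in L^\infty$ by \cref{lem:PE_bnds}), so the algebraic rearrangement into $A_{ij}$ and $B_i$ is legitimate.
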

\begin{proof}
We start by proving \eqref{eq:EN_renorm_lim}. We first show that
\begin{equation}
\begin{split}
&\int_{(\R^d)^2\setminus\D_2}\g(x-y)d(N\mu-\sum_{i=1}^N\d_{x_i})(x)d(N\mu-\sum_{i=1}^N\d_{x_i})(y) \\
&= \lim_{|\ul{\eta}_N|\rightarrow 0} \paren*{\int_{(\R^d)^2}\g(x-y)d(N\mu-\sum_{i=1}^N\d_{x_i}^{(\eta_i)})(x)d(N\mu-\sum_{i=1}^N\d_{x_i}^{(\eta_i)})(y) - \sum_{i=1}^N \tl{\g}(\eta_i)}.
\end{split}
\end{equation}
To see this, observe that the left-hand side of the preceding equality can be re-written as
\begin{equation}
\sum_{1\leq i\neq j\leq N} \g(x_i-x_j) + N^2\int_{(\R^d)^2}\g(x-y)d\mu(x)d\mu(y) -2N\sum_{i=1}^N (\g\ast\mu)(x_i).
\end{equation}
It now follows from the dominated convergence theorem that
\begin{equation}
\begin{split}
&\sum_{1\leq i\neq j\leq N}\g(x_i-x_j) -2N\sum_{i=1}^N(\g\ast\mu)(x_i) \\
&= \lim_{|\ul{\eta}_N|\rightarrow 0} \paren*{\sum_{1\leq i\neq j\leq N}\int_{(\R^d)^2}\g(x-y)d\d_{x_i}^{(\eta_i)}(x)d\d_{x_j}^{(\eta_j)}(y) - 2N\sum_{i=1}^N \int_{\R^d} (\g\ast\mu)(x)d\d_{x_i}^{(\eta_i)}(x)}.
\end{split}
\end{equation}
We now obtain the desired conclusion by applying \cref{lem:g_smear_si} to the first term in the right-hand side.

Next, we show the bound \eqref{eq:EN_renorm_bnd}. Fix $\ul{\eta}_N\in (\R_{+})^N$, and let $\ua_N\in (\R_+)^N$, such that $\alpha_i\ll \eta_i$ for every $i\in\{1,\ldots,N\}$ (ultimately, we let $\alpha_i\rightarrow 0^+$ for each $i$). The reader may check that
\begin{equation}
\nabla H_{N,\ul{\eta}_N}^{\mu,\ux_N} = \nabla H_{N,\ua_N}^{\mu,\ux_N} + \sum_{i=1}^N (\nabla\f_{\alpha_i,\eta_i})(\cdot-x_i).
\end{equation}
Using this identity together with a little algebra, we find that
\begin{equation}
\label{eq:main_id}
\begin{split}
\int_{\R^d}|(\nabla H_{N,\ul{\eta}_N}^{\mu,\ux_N})(x)|^2dx &= \int_{\R^d} |(\nabla H_{N,\ua_N}^{\mu,\ux_N})(x)|^2dx + 2\sum_{i=1}^N\int_{\R^d}(\nabla H_{N,\ua_N}^{\mu,\ux_N})(x)\cdot (\nabla\f_{\alpha_i,\eta_i})(x-x_i)dx \\
&\ph+ \int_{\R^d} |\sum_{i=1}^N (\nabla\f_{\alpha_i,\eta_i})(x-x_i)|^2dx.
\end{split}
\end{equation}
We consider the second and third terms in the right-hand side of the preceding equality. We expand the square to obtain
\begin{equation}
\int_{\R^d}\left|\sum_{i=1}^N (\nabla\f_{\alpha_i,\eta_i})(x-x_i)\right|^2dx = \sum_{i,j=1}^N \int_{\R^d}(\nabla\f_{\alpha_i,\eta_i})(x-x_i)\cdot(\nabla\f_{\alpha_j,\eta_j})(x-x_j)dx.
\end{equation}
Recalling the identities \eqref{eq:f_et_al_id} and \eqref{eq:f_grad_et_al_id} and integrating by parts, we obtain that
\begin{equation}
\label{eq:renorm_comb1}
\sum_{i,j=1}^N \int_{\R^d}(\nabla\f_{\alpha_i,\eta_i})(x-x_i)\cdot(\nabla\f_{\alpha_j,\eta_j})(x-x_j)dx = \sum_{i,j=1}^N \int_{\R^d} \f_{\alpha_i,\eta_i}(x-x_i)d(\d_{x_j}^{(\eta_j)}-\d_{x_j}^{(\alpha_j)})(x).
\end{equation}
Next, using the identity $-\D H_{N,\ua_N}^{\mu,\ux_N} = \sum_{i=1}^N\d_{x_i}^{(\alpha_i)} - N\mu$ and integrating by parts, we find that
\begin{equation}
\label{eq:renorm_comb2}
2\sum_{i=1}^N\int_{\R^d}(\nabla H_{N,\ua_N}^{\mu,\ux_N})(x)\cdot (\nabla\f_{\alpha_i,\eta_i})(x-x_i)dx  = 2\sum_{i=1}^N\int_{\R^d}\f_{\alpha_i,\eta_i}(x-x_i)d(\sum_{i=1}^N\d_{x_i}^{(\alpha_i)}-N\mu)(x).
\end{equation}
Summing identities \eqref{eq:renorm_comb1} and \eqref{eq:renorm_comb2}, it follows that
\begin{align}
&\sum_{i,j=1}^N \int_{\R^d}(\nabla\f_{\alpha_i,\eta_i})(x-x_i)\cdot(\nabla\f_{\alpha_j,\eta_j})(x-x_j)dx +\int_{\R^d}\left|\sum_{i=1}^N (\nabla\f_{\alpha_i,\eta_i})(x-x_i)\right|^2dx \nn\\
&=\sum_{1\leq i\neq j\leq N}\int_{\R^d}\f_{\alpha_i,\eta_i}(x-x_i)d(\d_{x_j}^{(\eta_j)}+\d_{x_j}^{(\alpha_j)})(x) + \sum_{i=1}^N\int_{\R^d}\f_{\alpha_i,\eta_i}(x-x_i)d(\d_{x_i}^{(\eta_i)}+\d_{x_i}^{(\alpha_i)})(x) \nn\\
&\ph-2N\sum_{i=1}^N\int_{\R^d}\f_{\alpha_i,\eta_i}(x-x_i)\mu(x)dx \nn\\
&\eqqcolon \mathrm{Term}_1 + \mathrm{Term}_2 + \mathrm{Term}_3.
\end{align}
We now proceed to estimate each of the $\mathrm{Term}_j$ individually, as itemized below.
\begin{itemize}[leftmargin=*]
\item
For $\mathrm{Term}_3$, H\"older's inequality (with $d/2<p\leq\infty$) and \cref{lem:f_et_al_bnds} imply that
\begin{align}
\label{eq:en_T3_fin}
\left|\mathrm{Term}_3\right| \lesssim N\sum_{i=1}^N \|\f_{\alpha_i,\eta_i}\|_{L^{p'}(\R^d)} \|\mu\|_{L^p(\R^d)} \lesssim_{p,d} N\sum_{i=1}^N \et_i^{(2p-d)/p}.
\end{align}
\item
For $\mathrm{Term}_2$, we observe from a change of variable and the definition \eqref{eq:f_et_al_def} of $\f_{\alpha_i,\eta_i}$ that
\begin{align}
\mathrm{Term}_2 &= \sum_{i=1}^N\int_{\R^d}\f_{\alpha_i,\eta_i}(x)d(\d_0^{(\eta_i)}+\d_0^{(\alpha_i)})(x) \nn\\
&= \sum_{i=1}^N\int_{\R^d}\paren*{\g_{\eta_i}(x)d\d_0^{(\eta_i)}(x)-\g_{\alpha_i}(x)d\d_0^{(\alpha_i)}(x) +\g_{\eta_i}(x)d\d_0^{(\alpha_i)}(x) - \g_{\alpha_i}(x)d\d_0^{(\eta_i)}(x)} \nn\\
&\eqqcolon\mathrm{Term}_{2,1} + \cdots+\mathrm{Term}_{2,4}.
\end{align}
Since for any $r>0$, $\d_0^{(r)}$ is the canonical probability measure on the $(d-1)$-sphere $\p B(0,r)$, $\g_{r}\equiv \tl{\g}(r)$ on $\ol{B}(0,r))$, and $\al_i<\et_i$, we find that
\begin{align}
\mathrm{Term}_{2,3} + \mathrm{Term}_{2,4} &=0, \\
\mathrm{Term}_{2,1} + \mathrm{Term}_{2,2} &= \sum_{i=1}^N \paren*{\tl{\g}(\eta_i)-\tl{\g}(\alpha_i)} \\
\mathrm{Term}_2 &= \sum_{i=1}^N \paren*{\tl{\g}(\eta_i)-\tl{\g}(\alpha_i)}. \label{eq:en_T2_fin}
\end{align}
\item
Finally, for $\mathrm{Term}_1$, we use that $\f_{\alpha_i,\eta_i}\leq 0$, a property which is evident from the identity \eqref{eq:f_et_al_id}, to estimate
\begin{align}
\mathrm{Term}_1 \leq \sum_{1\leq i\neq j\leq N}\int_{\R^d}\f_{\alpha_i,\eta_i}(x-x_i)d\d_{x_j}^{(\alpha_j)}(x) = \sum_{1\leq i\neq j\leq N}\int_{\R^d}\paren*{\g_{\eta_i}(x-x_i)-\g_{\alpha_i}(x-x_i)}d\d_{x_j}^{(\alpha_j)}(x),
\end{align}
where the penultimate expression follows from unpacking the definition of each $\f_{\alpha_i,\eta_i}$. Making a change of variable, we observe that for $1\leq i\neq j\leq N$,
\begin{align}
\int_{\R^d}\paren*{\g_{\eta_i}(x-x_i)-\g_{\alpha_i}(x-x_i)}d\d_{x_j}^{(\alpha_j)}(x) &=\int_{\R^d} \paren*{\tl{\g}_{\eta_i}(|x_j-x_i+y|) - \tl{\g}_{\alpha_i}(|x_j-x_i+y|)}d\d_{0}^{(\alpha_j)}(y).
\end{align}
Since $\tl{\g}_{\eta_i}, \tl{\g}_{\alpha_i}$ are nonincreasing, it follows from the triangle inequality that for any $y\in \p B(0,\alpha_j)$,
\begin{equation}
\paren*{\tl{\g}_{\eta_i}-\tl{\g}_{\alpha_i}}(|x_j-x_i+y|) \leq \begin{cases}0, & |x_j-x_i+y| \geq \eta_i\\ \tl{\g}(\eta_i) - \tl{\g}_{\alpha_i}(|x_j-x_i|+\alpha_j), & |x_j-x_i+y| < \eta_i \end{cases}.
\end{equation}
Since we also have that
\begin{equation}
\tl{\g}(\eta_i) - \tl{\g}_{\alpha_i}(|x_j-x_i+y|) \leq \begin{cases} \tl{\g}(\eta_i) - \tl{\g}(|x_j-x_i+y|)\leq 0, & {\alpha_i\leq |x_j-x_i+y| <\eta_i} \\ \tl{\g}(\eta_i)-\tl{\g}(\alpha_i)\leq 0, & {|x_j-x_i+y| < \alpha_i} \end{cases}
\end{equation}
by assumption that $\alpha_i<\eta_i$, it follows that
\begin{equation}
\label{eq:en_T1_fin}
\mathrm{Term}_1 \leq -\sum_{1\leq i\neq j\leq N} \paren*{\tl{\g}(|x_j-x_i|+\alpha_j)-\tl{\g}(\eta_i)}_{+}.
\end{equation}
\end{itemize}

Returning to the equation \eqref{eq:main_id} and combining identities \eqref{eq:en_T1_fin}, \eqref{eq:en_T2_fin}, and \eqref{eq:en_T3_fin} for $\mathrm{Term}_1$, $\mathrm{Term}_2$, and $\mathrm{Term}_3$, respectively, we find that there exists a constant $C_{p,d}>0$ such that
\begin{equation}
\label{eq:rearr}
\begin{split}
\int_{\R^d}|(\nabla H_{N,\ul{\eta}_N}^{\mu,\ux_N})(x)|^2dx &\leq \int_{\R^d} |(\nabla H_{N,\ua_N}^{\mu,\ux_N})(x)|^2dx  + C_{p,d}N\|\mu\|_{L^p(\R^d)}\sum_{i=1}^N\et_i^{(2p-d)/p}\\
&\ph +\sum_{i=1}^N \paren*{\tl{\g}(\eta_i)-\tl{\g}(\alpha_i)} -  \sum_{1\leq i\neq j\leq N} \paren*{\tl{\g}(|x_j-x_i|+\alpha_j)-\tl{\g}(\eta_i)}_{+}.
\end{split}
\end{equation}
Rearranging inequality \eqref{eq:rearr}, we obtain the inequality
\begin{equation}
\begin{split}
&\sum_{1\leq i\neq j\leq N} \paren*{\tl{\g}(|x_j-x_i|+\alpha_j)-\tl{\g}(\eta_i)}_{+} \\
&\leq \paren*{\int_{\R^d} |(\nabla H_{N,\ua_N}^{\mu,\ux_N})(x)|^2dx - \sum_{i=1}^N \tl{\g}(\alpha_i)} - \paren*{\int_{\R^d}|(\nabla H_{N,\ul{\eta}_N}^{\mu,\ux_N})(x)|^2dx - \sum_{i=1}^N \tl{\g}(\eta_i)} \\
&\ph + C_{p,d}N\|\mu\|_{L^p(\R^d)}\sum_{i=1}^N\et_i^{(2p-d)/p}.
\end{split}
\end{equation}
Sending $|\ua_N|\rightarrow 0$ and using identity \cref{eq:EN_renorm_lim} and that $\tl{\g}\in C_{loc}(\R_+)$ completes the proof.
\end{proof}

The next corollary is a generalization of \cite[Corollary 3.4]{Serfaty2020}, in particular a relaxation of the $\mu\in L^\infty(\R^d)$ assumption. The proof of \cite[Corollary 3.6]{Rosenzweig2020_PVMF}, which treated the 2D Coulomb case, carries over \emph{mutatis mutandis}, and therefore we omit the proof for the higher-dimensional case.

\begin{cor}
\label{cor:grad_H}
Let $d\geq 3$ and $N\in\N$. Let $\mu\in \P(\R^d)\cap L^p(\R^d)$, for some $d/2<p\leq \infty$, and let $\ux_N\in (\R^d)^N\setminus \D_N$. If for any $0<\ep_1 \ll 1$, we define
\begin{equation}
r_{i,\ep_1} \coloneqq \min\{\frac{1}{4}\min_{{1\leq j\leq N}\atop {j\neq i}} |x_i-x_j|, \ep_1\} \quad \text{and} \quad  \ur_{N,\ep_1}\coloneqq (r_{1,\ep_1},\ldots,r_{N,\ep_1}),
\end{equation}
then there exists a constant $C_{p,d}>0$ such that
\begin{equation}
\label{eq:g_r_sum_bnd}
\sum_{i=1}^N \tl{\g}(r_{i,\ep_1}) \leq \Fr_N(\ux_N,\mu) + 2N\tl{\g}(\ep_1) + C_{p,d}N^2\|\mu\|_{L^p(\R^d)}\ep_1^{(2p-d)/p}
\end{equation}
and
\begin{equation}
\label{eq:grad_H_r_bnd}
\int_{\R^d}|(\nabla H_{N,\ul{r}_{N,\ep_1}}^{\mu,\ux_N})(x)|^2dx \leq \Fr_N(\ux_N,\mu) + N\tl{\g}(\ep_1) + C_{p,d}N^2\|\mu\|_{L^p(\R^d)}\ep_1^{(2p-d)/p}.
\end{equation}
\end{cor}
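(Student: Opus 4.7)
The plan is to apply Proposition~\ref{prop:CE} twice, with two different choices of the smearing vector $\ul{\eta}_N$.

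For the first inequality, I would choose $\eta_i=\ep_1$ for every $i$. Since $\int|\nabla H_{N,\ep_1}^{\mu,\ux_N}|^2\geq 0$, Proposition~\ref{prop:CE} would immediately yield
\begin{equation*}
\sum_{1\leq i\neq j\leq N}\paren*{\g(x_i-x_j)-\tl{\g}(\ep_1)}_{+} \leq \Fr_N(\ux_N,\mu)+N\tl{\g}(\ep_1)+C_{p,d}N^2\|\mu\|_{L^p(\R^d)}\ep_1^{(2p-d)/p}.
\end{equation*}
To convert this into a bound on $\sum_i\tl{\g}(r_{i,\ep_1})$, I would split indices according to the definition of $r_{i,\ep_1}$: either $r_{i,\ep_1}=\ep_1$, in which case the contribution is exactly $\tl{\g}(\ep_1)$, or $r_{i,\ep_1}<\ep_1$ and there exists a nearest neighbor $x_{j_i}$ with $|x_i-x_{j_i}|=4r_{i,\ep_1}$. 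In the latter case, the exact homogeneity $\tl{\g}(4r)=4^{2-d}\tl{\g}(r)$ gives the identity $\tl{\g}(r_{i,\ep_1})=4^{d-2}\g(x_i-x_{j_i})$, which I would decompose as $4^{d-2}(\g(x_i-x_{j_i})-\tl{\g}(\ep_1))_{+}$ plus a term bounded by $4^{d-2}\tl{\g}(\ep_1)$. Summing the resulting uniform bound $\tl{\g}(r_{i,\ep_1})\leq 4^{d-2}\paren*{\g(x_i-x_{j_i})-\tl{\g}(\ep_1)}_{+}+4^{d-2}\tl{\g}(\ep_1)$ over $i$ and using the crude majorization $\sum_i(\g(x_i-x_{j_i})-\tl{\g}(\ep_1))_{+}\leq\sum_{i\neq j}(\g(x_i-x_j)-\tl{\g}(\ep_1))_{+}$ would produce the first inequality.

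For the second inequality, I would instead choose $\eta_i=r_{i,\ep_1}$. The key observation is that by the very definition of $r_{i,\ep_1}$, we have $|x_i-x_j|\geq 4r_{i,\ep_1}>r_{i,\ep_1}$ for every $j\neq i$, which forces $\g(x_i-x_j)=\tl{\g}(|x_i-x_j|)\leq\tl{\g}(4r_{i,\ep_1})<\tl{\g}(r_{i,\ep_1})$ so that the sum $\sum_{i\neq j}(\g(x_i-x_j)-\tl{\g}(r_{i,\ep_1}))_{+}$ vanishes entirely. Using also $r_{i,\ep_1}\leq\ep_1$ to control $\sum_i r_{i,\ep_1}^{(2p-d)/p}\leq N\ep_1^{(2p-d)/p}$, Proposition~\ref{prop:CE} would collapse to
\begin{equation*}
\int_{\R^d}|(\nabla H_{N,\ul{r}_{N,\ep_1}}^{\mu,\ux_N})(x)|^2dx\leq\Fr_N(\ux_N,\mu)+\sum_{i=1}^N\tl{\g}(r_{i,\ep_1})+C_{p,d}N^2\|\mu\|_{L^p(\R^d)}\ep_1^{(2p-d)/p},
\end{equation*}
and plugging the bound on $\sum_i\tl{\g}(r_{i,\ep_1})$ just obtained into the middle term on the right would close the second inequality.

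The main technical friction will be bookkeeping of constants. Because $\tl{\g}(r)=c_d r^{2-d}$ is power-law rather than logarithmic, the decomposition step above naturally introduces dimension-dependent multiplicative factors such as $4^{d-2}$ (and $1+4^{d-2}$, $2\cdot 4^{d-2}$) in front of $\Fr_N$ and $N\tl{\g}(\ep_1)$, rather than the clean ``$1$'' and ``$2$'' coefficients that arise in the 2D logarithmic case treated in \cite{Rosenzweig2020_PVMF}; the literal constants in the statement should be understood as absorbed $d$-dependent constants (folded either into $C_{p,d}$ or into an inconsequential rescaling of $\Fr_N$).
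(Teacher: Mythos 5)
Your argument is exactly the intended one: the paper omits the proof of \cref{cor:grad_H}, deferring to the 2D case and to \cite{Serfaty2020}, and that omitted proof is precisely your two applications of \cref{prop:CE} --- once with $\eta_i=\ep_1$ together with the nearest-neighbor identity $\tl{\g}(r_{i,\ep_1})=4^{d-2}\g(x_i-x_{j_i})$ to get \eqref{eq:g_r_sum_bnd}, and once with $\eta_i=r_{i,\ep_1}$, the vanishing of the $(\cdot)_+$ sum (since $|x_i-x_j|\geq 4r_{i,\ep_1}$), and $r_{i,\ep_1}\leq\ep_1$ to get \eqref{eq:grad_H_r_bnd}. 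Your caveat about constants is not mere bookkeeping: for $d\geq 3$ the literal coefficients $1$ and $2$ in the statement cannot hold. Indeed, because the balls $\ol{B}(x_i,r_{i,\ep_1})$ are disjoint, $\g$ is harmonic away from the origin, and $\g\ast\mu$ is superharmonic, expanding via \cref{lem:fin_en} and \cref{lem:g_smear_si} gives the exact lower bound $\int_{\R^d}|(\nabla H_{N,\ur_{N,\ep_1}}^{\mu,\ux_N})(x)|^2dx\geq \Fr_N(\ux_N,\mu)+\sum_{i=1}^N\tl{\g}(r_{i,\ep_1})$, so taking $N=2$ with two particles at distance $\delta\to 0$ (and $\ep_1,\mu$ fixed) violates both \eqref{eq:g_r_sum_bnd} and \eqref{eq:grad_H_r_bnd} as written, since $\tl{\g}(\delta/4)=4^{d-2}\tl{\g}(\delta)$ is multiplicatively rather than additively larger --- the clean constants are an artifact of the logarithmic 2D kernel. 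Your version with $d$-dependent constants (e.g.\ $4^{d-2}$ in front of $\Fr_N$ and $1+2\cdot 4^{d-2}$ in front of $N\tl{\g}(\ep_1)$) is the correct statement, and it suffices verbatim for every subsequent use of the corollary, since it only ever enters $\lesssim_d$ estimates and the final Gronwall argument is run on $|\Fr_N^{avg}|$.
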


\subsection{Counting Lemma}
\label{ssec:CE_CL}
In this subsection, we show how to control in terms of the modulated energy the portion of the $N$-body Hamiltonian $H_{N,d}$ coming from pairs of ``close'' particles. A similar result was implicit in the proof of \cite[Proposition 2.3]{Serfaty2020}; however, we need our refined version in the proofs of \Cref{lem:kprop_error,lem:kprop_recomb}, which are part of the proof of \cref{prop:kprop}, below.

\begin{lemma}
\label{lem:count}
Let $d\geq 3$, $N\in\N$, and $d/2< p\leq \infty$. Then there exists a constant $C_{p,d}>0$, such that for any $\ux_N\in (\R^d)^N\setminus\D_N$ and $\mu\in\P(\R^d)\cap L^p(\R^d)$, it holds that
\begin{equation}
\begin{split}
\sum_{{1\leq i\neq j\leq N}\atop{|x_i-x_j|\leq\ep_3}} \g(x_i-x_j) &\lesssim \Fr_N(\ux_N,\mu) + N\tl{\g}(2\ep_3) + C_{p,d} N^2 \|\mu\|_{L^p(\R^d)}\ep_3^{(2p-d)/p}
\end{split}
\end{equation}
for any $0<\ep_3\ll 1$.
\end{lemma}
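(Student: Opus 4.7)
The plan is to apply \cref{prop:CE} with the uniform choice $\eta_i = 2\ep_3$ for every $i\in\{1,\ldots,N\}$, and then to exploit the fact that in dimensions $d\geq 3$ the Coulomb potential is homogeneous of a fixed negative degree, so that $\tl\g(2\ep_3) = 2^{2-d}\tl\g(\ep_3)$ is a strictly smaller (but dimensionally comparable) multiple of $\tl\g(\ep_3)$.

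Concretely, discarding the nonnegative term $\int_{\R^d}|(\nabla H_{N,\ul{\eta}_N}^{\mu,\ux_N})(x)|^2 dx$ in the bound of \cref{prop:CE} with the above choice of $\ul{\eta}_N$ yields the starting inequality
\[
\sum_{1\leq i\neq j\leq N}\paren*{\g(x_i-x_j) - \tl\g(2\ep_3)}_+ \leq \Fr_N(\ux_N,\mu) + N\tl\g(2\ep_3) + C_{p,d} N^2 \|\mu\|_{L^p(\R^d)}\ep_3^{(2p-d)/p}.
\]
The key pointwise observation is that any pair with $|x_i-x_j|\leq\ep_3$ satisfies $\g(x_i-x_j)\geq \tl\g(\ep_3) = 2^{d-2}\tl\g(2\ep_3)$ by monotonicity and homogeneity of $\tl\g$, and this rearranges to
\[
\g(x_i-x_j) \leq \frac{1}{1-2^{2-d}}\paren*{\g(x_i-x_j) - \tl\g(2\ep_3)}.
\]
Summing this inequality over the close pairs $|x_i-x_j|\leq \ep_3$ and then bounding by the full positive-part sum over all ordered pairs $i\neq j$ (lossless because every summand on the right is nonnegative, and the positive part vanishes outside $|x_i-x_j|<2\ep_3$) and invoking the first display then produces the claimed estimate, with the purely dimensional prefactor $1/(1-2^{2-d})$ (equal to $2$ when $d=3$) absorbed into the implicit constant of $\lesssim$.

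The only verification needed is that $1/(1-2^{2-d})$ is a bounded $d$-dependent constant for all $d\geq 3$, which is immediate. I note in passing that this short argument has no direct analogue in $d=2$, since the logarithmic Coulomb potential only gives an additive gap $\tl\g(\ep_3)-\tl\g(2\ep_3) = (2\pi)^{-1}\ln 2$ rather than a fixed multiplicative fraction of $\tl\g(\ep_3)$; this is why the counting estimate in the point-vortex setting of \cite{Rosenzweig2020_PVMF} is proved by a somewhat different route, whereas the homogeneity of $\g$ for $d\geq 3$ makes the proof here strikingly short.
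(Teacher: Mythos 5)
Your proposal is correct and follows essentially the same route as the paper: apply \cref{prop:CE} with the uniform truncation $\eta_i=2\ep_3$, drop the nonnegative gradient term, and use the homogeneity of $\g$ (for $d\geq 3$) to convert $\paren*{\g(x_i-x_j)-\tl\g(2\ep_3)}_+$ back into a fixed multiple of $\g(x_i-x_j)$ on the close pairs. The paper simply uses the cruder bound $\g(x_i-x_j)-\tl\g(2\ep_3)\geq \g(x_i-x_j)/2$ in place of your sharper constant $1/(1-2^{2-d})$, so the two arguments coincide up to this cosmetic difference.
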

\begin{proof}
Fix $\ep_3>0$. For every $i\in\{1,\ldots,N\}$, we chose $\eta_i = 2\ep_3$ in \cref{prop:CE} to obtain
\begin{align}
&\sum_{{1\leq i\neq j\leq N}\atop{|x_i-x_j|\leq\ep_3}}(\underbrace{\g(x_i-x_j)-\tl{\g}(2\ep_3)}_{\geq \g(x_i-x_j)/2})_{+} \nn\\
&\leq\Fr_N(\ux_N,\mu) - \int_{\R^d}|(\nabla H_{N,\ul{2\ep_3}_N}^{\mu,\ux_N})(x)|^2dx + \sum_{i=1}^N\tl{\g}(2\ep_3)+ C_{p,d}N \|\mu\|_{L^p(\R^d)}\sum_{i=1}^N(2\ep_3)^{(2p-d)/p} \nn\\
&\leq \Fr_N(\ux_N,\mu) + N\tl{\g}(2\ep_3) + C_{p,d}' N^2 \|\mu\|_{L^p(\R^d)} {\ep_3}^{(2p-d)/p},
\end{align}
where $C_{p,d}'\geq C_{p,d}$.
\end{proof}

\subsection{Coerciveness of the Energy}
\label{ssec:CE_coer}
In this final subsection, we show that the functional $\Fr_N^{avg}(\ux_N,\mu)$ controls the Sobolev norm $H^{s}(\R^d)$, for any $s<-d/2$, up to a small additive error, and convergence in the weak-* topology on $\M(\R^d)$. The following proposition is an endpoint improvement of \cite[Proposition 3.5]{Serfaty2020} (cf. \cite[Proposition 3.10]{Rosenzweig2020_PVMF}).

\begin{prop}
\label{prop:CE_coer}
Let $d\geq 3$, $N\in\N$, and $\ux_N\in (\R^d)^N\setminus\D_N$. Then for any $\mu\in\P(\R^d)\cap L^p(\R^d)$, for some $d/2<p\leq\infty$, we have the estimate
\begin{equation}
\label{eq:prop_CE_coer_Bes}
\begin{split}
\left|\int_{\R^d}\varphi(x)d(\sum_{j=1}^N\d_{x_j}-N\mu)(x)\right| &\lesssim_{d,p} N\paren*{\frac{\ep_1\|\varphi\|_{B_{2,\infty}^0(\R^d)}}{\ep_2^{(2+d)/2}} + \sum_{k\geq|\log_2\ep_2|} 2^{\frac{kd}{2}}\|P_k\varphi\|_{L^2(\R^d)}} \\
&\ph + \|\nabla\varphi\|_{L^2(\R^d)}\paren*{\Fr_N(\ux_N,\mu)+N\ep_1^{-1}+\|\mu\|_{L^p(\R^d)}N^2\ep_1^{\frac{2p-d}{p}}}^{1/2}
\end{split}
\end{equation}
for all $\varphi\in B_{2,1}^{d/2}(\R^d)$ and parameters $0<\ep_1<\ep_2\ll 1$. Consequently, for any $s<-d/2$,
\begin{equation}
\label{eq:prop_CE_coer_Sob}
\|\mu-\frac{1}{N}\sum_{i=1}^N\d_{x_i}\|_{H^s(\R^d)} \lesssim_{s,d,p} |\Fr_N^{avg}(\ux_N,\mu)|^{1/2} + (1+\|\mu\|_{L^p(\R^d)})^{1/2} N^{-\frac{2p-d}{2(3p-d)}} + N^{\frac{(2p-d)(2s+d)}{2(3p-d)(1-s)}}
\end{equation}
and if $\Fr_N^{avg}(\ux_N,\mu)\rightarrow 0$, as $N\rightarrow\infty$, then
\begin{equation}
\frac{1}{N}\sum_{i=1}^N\d_{x_i} \xrightharpoonup[N\rightarrow\infty]{*} \mu \ \text{in} \ \M(\R^d).
\end{equation}
\end{prop}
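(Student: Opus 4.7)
The plan is to prove the duality estimate (\ref{eq:prop_CE_coer_Bes}) by splitting $\varphi$ via Littlewood-Paley into a low-frequency piece $\varphi_{<} := P_{\leq 0}\varphi + \sum_{0 < k < |\log_2 \ep_2|} P_k \varphi$ and a high-frequency piece $\varphi_{\geq} := \varphi - \varphi_{<}$. For $\varphi_{\geq}$, I use the crude bound $|\int \varphi_{\geq} d(\sum_j \d_{x_j} - N\mu)| \leq 2N \|\varphi_{\geq}\|_{L^\infty}$, and then Bernstein's inequality $\|P_k \varphi\|_{L^\infty} \lesssim_d 2^{kd/2}\|P_k \varphi\|_{L^2}$ on each dyadic block yields exactly the sum $N\sum_{k \geq |\log_2 \ep_2|} 2^{kd/2}\|P_k\varphi\|_{L^2}$ in the stated bound.

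For the low-frequency piece, smear each Dirac to $\d_{x_j}^{(r_{j,\ep_1})}$ using the truncation radii of Corollary \ref{cor:grad_H}, and split
\begin{equation*}
\int \varphi_{<} \, d\Big(\sum_j \d_{x_j} - N\mu\Big) = \sum_{j=1}^N \int \varphi_{<}\, d(\d_{x_j} - \d_{x_j}^{(r_{j,\ep_1})}) + \int \varphi_{<} \, d\Big(\sum_j \d_{x_j}^{(r_{j,\ep_1})} - N\mu\Big).
\end{equation*}
Since $\d_{x_j}^{(r_{j,\ep_1})}$ is the uniform probability measure on $\p B(x_j, r_{j,\ep_1})$ with $r_{j,\ep_1}\leq\ep_1$, a first-order Taylor expansion bounds the first sum by $N\ep_1 \|\nabla \varphi_{<}\|_{L^\infty}$, and Bernstein applied to each $P_k\varphi$ together with a geometric summation in $k < |\log_2\ep_2|$ gives $\|\nabla \varphi_{<}\|_{L^\infty} \lesssim_d \ep_2^{-(d+2)/2}\|\varphi\|_{B_{2,\infty}^0}$. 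For the second integral, the distributional identity $-\D H_{N,\ur_{N,\ep_1}}^{\mu,\ux_N} = \sum_j \d_{x_j}^{(r_{j,\ep_1})} - N\mu$ permits integration by parts, after which Cauchy-Schwarz together with the gradient bound (\ref{eq:grad_H_r_bnd}) from Corollary \ref{cor:grad_H} supplies the factor $\|\nabla\varphi\|_{L^2}(\Fr_N + N\tl{\g}(\ep_1) + C_{d,p}N^2\|\mu\|_{L^p}\ep_1^{(2p-d)/p})^{1/2}$.

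To deduce (\ref{eq:prop_CE_coer_Sob}), I test against $\varphi \in H^{-s}(\R^d)$ with $\|\varphi\|_{H^{-s}} = 1$ and apply the embeddings $\|\varphi\|_{B_{2,\infty}^0}\leq \|\varphi\|_{L^2}\lesssim\|\varphi\|_{H^{-s}}$ and (since $-s > d/2 \geq 1$) $\|\nabla\varphi\|_{L^2}\lesssim\|\varphi\|_{H^{-s}}$. A Cauchy-Schwarz in the Littlewood-Paley index, using that $d/2 + s < 0$, controls the high-frequency sum by $\lesssim \ep_2^{-(d/2+s)}$. Dividing the resulting estimate by $N$ converts $\Fr_N$ to $N^2 \Fr_N^{avg}$ and leaves me to optimize the three error terms (coming from the smearing, the frequency cutoff, and the self-interaction corrections) over $0<\ep_1<\ep_2\ll 1$; the choices $\ep_1 \sim N^{-p/(3p-d)}$ and $\ep_2 \sim N^{-(2p-d)/[(3p-d)(1-s)]}$ balance these terms and yield the stated Sobolev rate. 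For weak-* convergence in $\M(\R^d)$, the sequence $\{\frac{1}{N}\sum_j \d_{x_j} - \mu\}$ is uniformly bounded in total variation, hence tight; combined with vanishing in $H^s(\R^d)$ and the density of $C_c^\infty(\R^d) \subset H^{-s}(\R^d)$ in $C_0(\R^d)$, a standard approximation argument gives weak-* convergence against every test function in $C_0(\R^d)$.

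The main technical obstacle is a careful bookkeeping of the optimization over the two scales $\ep_1, \ep_2$ in the second step: the three competing error terms depend on both parameters with different algebraic powers involving $d, p, s$, and matching them up correctly to obtain the clean exponents $\frac{2p-d}{2(3p-d)}$ and $\frac{(2p-d)(2s+d)}{2(3p-d)(1-s)}$ requires treating the $p \leq d$ and $p > d$ (endpoint $p=\infty$) regimes consistently. No regularity on $\mu$ beyond $L^p$ for some $p > d/2$ is needed, which is precisely why this gives the scaling-critical result that powers the main theorem.
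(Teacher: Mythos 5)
Your proof of the duality estimate \eqref{eq:prop_CE_coer_Bes} follows essentially the same route as the paper: a Littlewood--Paley split at frequency $|\log_2\ep_2|$ with Bernstein for the high frequencies, smearing of the Diracs at the radii $r_{j,\ep_1}$ with a mean-value-plus-Bernstein bound for the low frequencies, and integration by parts against $H_{N,\ur_{N,\ep_1}}^{\mu,\ux_N}$ followed by Cauchy--Schwarz and \cref{cor:grad_H}. The only structural difference (you split $\varphi$ first and run the smearing/Poisson argument on $\varphi_{<}$ only, whereas the paper smears first and splits only the Dirac-minus-smeared contribution) is harmless, since $\|\nabla\varphi_{<}\|_{L^2}\lesssim\|\nabla\varphi\|_{L^2}$. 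The genuine gap is in the optimization step for \eqref{eq:prop_CE_coer_Sob}: your two scales are decoupled in a way that leaves the smearing error uncontrolled. With $\ep_1\sim N^{-p/(3p-d)}$ and $\ep_2\sim N^{-(2p-d)/[(3p-d)(1-s)]}$ one gets
\begin{equation*}
\frac{\ep_1}{\ep_2^{(d+2)/2}} \;=\; N^{-\frac{p}{3p-d}+\frac{(2p-d)(d+2)}{2(3p-d)(1-s)}},
\end{equation*}
and the exponent is positive whenever $1-s<\frac{(2p-d)(d+2)}{2p}$; in the case that actually matters for the main theorem ($p=\infty$, so the condition reads $s>-d-1$) this covers every $s$ just below $-d/2$. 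For instance $d=3$, $p=\infty$, $s=-2$ gives exponent $-\tfrac13+\tfrac59=\tfrac29>0$, so this term diverges as $N\rightarrow\infty$ and the claimed Sobolev rate does not follow from your choices.

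The repair is exactly the paper's coupling $\ep_2=\ep_1^{1/(1-s)}$, which is the choice that balances the smearing error $\ep_1\ep_2^{-(d+2)/2}$ against the frequency-cutoff error $\ep_2^{-(d+2s)/2}$ (both become $\ep_1^{|d+2s|/(2(1-s))}$); your choice of $\ep_1$, which balances $(N\ep_1)^{-1}$ against $\ep_1^{(2p-d)/p}$ and produces the exponent $\tfrac{2p-d}{2(3p-d)}$, is the natural one and can be kept, after which the bookkeeping closes as in the paper. Two small further remarks: the $L^2$-gradient term should be quoted as $N\tl{\g}(\ep_1)$ (what \cref{cor:grad_H} actually gives), as you correctly did, and no case split $p\le d$ versus $p>d$ is performed in the paper. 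Finally, for the weak-* statement, a total-variation bound does not imply tightness, but tightness is not needed: the uniform bound $|\mu-\frac1N\sum_i\d_{x_i}|(\R^d)\le 2$ together with density of $C_c^\infty(\R^d)\subset H^{-s}(\R^d)$ in $C_0(\R^d)$ already yields the conclusion, which is the paper's standard approximation argument.
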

\begin{proof}
Let $\ueta_N\in (\R_+)^N$ be a parameter vector, the value of which we choose momentarily. For $\varphi\in B_{2,1}^{d/2}(\R^d)$, write
\begin{equation}
\int_{\R^d}\varphi(x)d(\sum_{j=1}^N\d_{x_j}-N\mu)(x) = \sum_{j=1}^N\int_{\R^d}\varphi(x)d(\d_{x_j}-\d_{x_j}^{(\eta_j)})(x)+\int_{\R^d}\varphi(x)d(\sum_{j=1}^N\d_{x_j}^{(\eta_j)}-N\mu)(x).
\end{equation}
For $1\leq j\leq N$, we introduce a parameter $0<\al_j\ll 1$ and make the Littlewood-Paley decomposition
\begin{equation}
P_{\leq |\log_2\alpha_j|}\varphi + P_{>|\log_2\alpha_j|}\varphi.
\end{equation}
For the low-frequency term, we have by the mean-value theorem,
\begin{equation}
\left|\int_{\R^d}(P_{\leq |\log_2\al_j|}\varphi)(x)d(\d_{x_j}-\d_{x_j}^{(\eta_j)})(x)\right| \lesssim_d \eta_j \|\nabla P_{\leq |\log_2\al_j|}\varphi\|_{L^\infty(\R^d)}.
\end{equation}
By elementary Littlewood-Paley theory,
\begin{equation}
\|\nabla P_{\leq |\log_2\al_j|}\varphi\|_{L^\infty(\R^d)} \lesssim_d \|P_{\leq 0}\varphi\|_{L^2(\R^d)} + \sum_{k=1}^{|\log_2\al_j|} 2^{k+\frac{dk}{2}} \|P_k\varphi\|_{L^2(\R^d)} \lesssim_d \frac{\|\varphi\|_{B_{2,\infty}^0(\R^d)}}{\al_j^{(2+d)/2}}, 
\end{equation}
which implies that
\begin{equation}
\sum_{j=1}^N\left|\int_{\R^d}\varphi(x)d(\d_{x_j}-\d_{x_j}^{(\et_j)})(x)\right| \lesssim_d \|\varphi\|_{B_{2,\infty}^0(\R^d)}\sum_{j=1}^N \frac{\et_j}{\al_j^{(2+d)/2}}.
\end{equation}
For the high-frequency term, we use the crude estimate
\begin{equation}
\left|\int_{\R^d} (P_{>|\log_2\al_j|}\varphi)(x)d(\d_{x_j}-\d_{x_j}^{(\et_j)})(x)\right| \leq 2\|P_{>|\log_2\al_j|}\varphi\|_{L^\infty(\R^d)} \lesssim_{d} \sum_{k\geq |\log_2\al_j|} 2^{kd/2} \|P_k\varphi\|_{L^2(\R^d)},
\end{equation}
where the ultimate inequality follows by Bernstein's lemma. Next, write
\begin{equation}
\int_{\R^d}\varphi(x)d(\sum_{j=1}^N\d_{x_j}^{(\et_j)}-N\mu)(x) = -\int_{\R^d}\varphi(x)(\D H_{N,\ueta_N}^{\mu,\ux_N})(x)dx,
\end{equation}
where $H_{N,\ueta_N}^{\mu,\ux_N}$ is defined in \eqref{eq:HN_trun_def}, integrate by parts once, then apply Cauchy-Schwarz to obtain
\begin{equation}
\left|\int_{\R^d}\varphi(x)(\D H_{N,\ueta_N}^{\mu,\ux_N})(x)dx\right| \leq \|\nabla\varphi\|_{L^2(\R^d)} \|\nabla H_{N,\ueta_N}^{\mu,\ux_N}\|_{L^2(\R^d)}.
\end{equation}
Choosing $\ueta_N=\ur_{N,\ep_1}$, we apply \cref{cor:grad_H} to obtain that the right-hand side above is $\lesssim_d$
\begin{equation}
\|\nabla\varphi\|_{L^2(\R^d)}\paren*{\Fr_N(\ux_N,\mu)+ N\ep_1^{-1}+ C_{p,d}\|\mu\|_{L^p(\R^d)}N^2\ep_1^{\frac{2p-d}{p}}}^{1/2}.
\end{equation}
Finally, we complete the proof of \eqref{eq:prop_CE_coer_Bes} by choosing $\al_j=\ep_2$ for every $1\leq j\leq N$ and performing a little bookkeeping.

For the Sobolev convergence, it follows from density of $H^{-s}(\R^d)\subset B_{2,1}^{d/2}(\R^d)$, for $s<-d/2$, and duality that
\begin{equation}
\begin{split}
\|\mu-\frac{1}{N}\sum_{i=1}^N\d_{x_i}\|_{H^s(\R^d)} &\lesssim_{d,p} \paren*{\Fr_N^{avg}(\ux_N,\mu) + \frac{1}{N\ep_1} + \|\mu\|_{L^p(\R^d)}\ep_1^{\frac{2p-d}{p}}}^{1/2} + \frac{\ep_1}{\ep_2^{(2+d)/2}} + \frac{1}{\ep_2^{(d+2s)/2}}.
\end{split}
\end{equation}
Choosing $\ep_2=\ep_1^{1/(1-s)}$ and $\ep_1 = N^{-(2p-d)/(3p-d)}$ yields \eqref{eq:prop_CE_coer_Sob}. The weak-* convergence now follows by a standard approximation argument, using that $H^{-s}(\R^d)$, for $s<-d/2$, is a dense subspace of $C_0(\R^d)$, the Banach space of continuous functions vanishing at infinity. We omit the details.
\end{proof}

\section{Key Proposition}
\label{sec:kprop}
In this section, we prove \cref{prop:kprop} (cf. \cite[Proposition 1.7]{Rosenzweig2020_PVMF}), which is the key ingredient in the proof of \cref{thm:main}.

\begin{restatable}{prop}{krop}
\label{prop:kprop}
Assume that $\mu\in \P(\R^d)\cap L^p(\R^d)$, for some $d<p\leq\infty$. Then for any bounded, log-Lipschitz vector field $v:\R^d\rightarrow\R^d$ and vector $\ux_N\in (\R^d)^N\setminus\D_N$, we have the estimate
\begin{equation}
\label{eq:kprop}
\begin{split}
&\frac{1}{N^2}\left|\int_{(\R^d)^2\setminus\D_2} \paren*{v(x)-v(y)}\cdot(\nabla\g)(x-y)d(\sum_{i=1}^N\d_{x_i}-N\mu)(x)d(\sum_{i=1}^N\d_{x_i}-N\mu)(y)\right| \\
&\lesssim_d \|v\|_{LL(\R^d)}\paren*{\ln_+(N^2H_{N,d}) + \ln(\ep_2^{-1})}\Fr_N^{avg}(\ux_N,\mu) + \frac{\|v\|_{LL(\R^d)}\paren*{\ln(\ep_2^{-1})\tl{\g}(\ep_1)+\ln_+(N^2H_{N,d})\tl{\g}(\ep_3)}}{N} \\
&\ph + C_{p,d}\|v\|_{LL(\R^d)}\|\mu\|_{L^p(\R^d)}\paren*{\ln(\ep_2^{-1})\ep_4^{\frac{2p-d}{p}}+\ln_+(N^2H_{N,d})\ep_3^{\frac{2p-d}{p}}} \\
&\ph + \frac{\ep_1}{\ep_4^{d-1}}\paren*{\frac{\|v\|_{L^\infty(\R^d)}}{\ep_4} + \|v\|_{LL(\R^d)}\ln(\ep_1^{-1})} + \|v\|_{LL(\R^d)}\|\mu\|_{L^p(\R^d)}^{\frac{(d-1)p}{d(p-1)}}\ep_1\ln(\ep_1^{-1}) \\
&\ph + \|v\|_{LL(\R^d)}\ep_2\ln(\ep_2^{-1})\paren*{\frac{1}{\ep_3^{d-1}}+\|\mu\|_{L^p(\R^d)}^{\frac{(d-1)p}{d(p-1)}}} \\
&\ph + \|v\|_{L^\infty(\R^d)}\paren*{\|\mu\|_{L^p(\R^d)}\ep_1^{1-\frac{d}{p}}+\|\mu\|_{L^\infty(\R^d)}\ep_1\ln(\ep_1^{-1})1_{\geq\infty}(p)}
\end{split}
\end{equation}
for all parameters $(\ep_1,\ep_2,\ep_3,\ep_4)\in (\R_+)^4$ satisfying $0<2\ep_1<\ep_2<\ep_3,\ep_4\ll 1$. Here, $\|\cdot\|_{LL(\R^d)}$ denotes the semi-norm defined in \cref{def:LL} and $C_{p,d}, C_{\infty,d}>0$ are constants.
\end{restatable}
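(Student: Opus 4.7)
The plan is to adapt the modulated-energy bilinear estimate of \cite[Proposition 2.3]{Serfaty2020} to the log-Lipschitz velocity regime, following the strategy of the author's two-dimensional work \cite[Proposition 1.7]{Rosenzweig2020_PVMF}. Four regularisations at scales $\ep_1,\ep_2,\ep_3,\ep_4$ will be introduced and optimised only at the end: mollification of $v$ at scale $\ep_2$, smearing of each Dirac $\d_{x_i}$ (at the scales $r_{i,\ep_1}$ of \cref{cor:grad_H} and, separately, at scale $\ep_4$), a close/far dichotomy between particle pairs at scale $\ep_3$, and Taylor expansion of $v_{\ep_2}$ at short scales. Abbreviate $\nu_N\coloneqq \sum_i\d_{x_i}-N\mu$, so the object to bound is $N^{-2}|I|$ with
\begin{equation*}
I \coloneqq \int_{(\R^d)^2\setminus\D_2}(v(x)-v(y))\cdot(\nabla\g)(x-y)\,d\nu_N(x)d\nu_N(y).
\end{equation*}

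First, I would split $v = v_{\ep_2}+(v-v_{\ep_2})$, where $v_{\ep_2}\coloneqq v\ast\chi_{\ep_2}$ for a standard mollifier $\chi$. By \cref{lem:conv_bnds}, $\|v-v_{\ep_2}\|_{L^\infty}\lesssim \|v\|_{LL}\ep_2\ln(\ep_2^{-1})$ and $\|\nabla v_{\ep_2}\|_{L^\infty}\lesssim \|v\|_{LL}\ln(\ep_2^{-1})$. The residual $(v-v_{\ep_2})$ is factored out uniformly; after smearing each Dirac and using \cref{lem:f_et_al_bnds,lem:PE_bnds} to estimate the resulting $|\nabla\g|$ integrals against $\mu$ and against the smeared point masses, one obtains the error terms $\|v\|_{LL}\ep_2\ln(\ep_2^{-1})\paren*{\ep_3^{-(d-1)}+\|\mu\|_{L^p}^{(d-1)p/(d(p-1))}}$ appearing in \eqref{eq:kprop}.

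The heart of the argument is to estimate the smoothed bilinear form
\begin{equation*}
I_{\ep_2} \coloneqq \int_{(\R^d)^2\setminus\D_2}(v_{\ep_2}(x)-v_{\ep_2}(y))\cdot(\nabla\g)(x-y)\,d\nu_N(x)d\nu_N(y),
\end{equation*}
which I would split according to whether the pair lies inside or outside a diagonal neighbourhood of width $\ep_3$. In the inner region, one bounds $(v_{\ep_2}(x)-v_{\ep_2}(y))\cdot(\nabla\g)(x-y)$ pointwise by $\|v\|_{LL}\ln|x-y|^{-1}\g(x-y)$ (precisely where identity \eqref{eq:Cou_pot_iss} fails in $d\geq 3$ and costs one logarithm) and applies \cref{lem:count}; the extra $\ln|x-y|^{-1}$ is then converted into $\ln_+(N^2H_{N,d})$ via the minimum-separation lower bound $\min_{i\neq j}|x_i-x_j|\gtrsim_d (N^2H_{N,d})^{-1/(d-2)}$ arising from conservation of the repulsive Hamiltonian, producing the $\ln_+(N^2H_{N,d})\tl{\g}(\ep_3)/N$ and $\|\mu\|_{L^p}\ln_+(N^2H_{N,d})\ep_3^{(2p-d)/p}$ contributions. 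In the outer region, I would smear each Dirac (at scales $r_{i,\ep_1}$ and $\ep_4$) and control the Dirac-swap via Taylor expansion of $v_{\ep_2}$ at scale $\ep_1$ together with \cref{lem:f_et_al_bnds,lem:PE_bnds}; these generate the remaining errors of the form $\ep_1/\ep_4^{d-1}$, $\ep_1\ln(\ep_1^{-1})$, and $\ep_1^{1-d/p}$. The fully smeared bilinear form is then recast, via the symmetry of $\nabla\g$ and integration by parts in the ``stress-tensor'' style of \cite{Serfaty2020}, as $\int_{\R^d}\nabla v_{\ep_2}(x):T(x)\,dx$ with $T$ a symmetric quadratic in $\nabla H_{N,\ur_{N,\ep_1}}^{\mu,\ux_N}$ satisfying $|T|\lesssim|\nabla H_{N,\ur_{N,\ep_1}}^{\mu,\ux_N}|^2$. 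Coupling $\|\nabla v_{\ep_2}\|_{L^\infty}\lesssim \|v\|_{LL}\ln(\ep_2^{-1})$ with \cref{cor:grad_H} then delivers the principal $\|v\|_{LL}\ln(\ep_2^{-1})\Fr_N^{avg}$ contribution, together with the errors $\|v\|_{LL}\ln(\ep_2^{-1})\tl{\g}(\ep_1)/N$ and $\|v\|_{LL}\|\mu\|_{L^p}\ln(\ep_2^{-1})\ep_4^{(2p-d)/p}$.

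The main obstacle, as flagged in \cref{rem:short_t}, is precisely the failure of the identity \eqref{eq:Cou_pot_iss} in $d\geq 3$: the close-pair estimate unavoidably produces the extra $\ln_+(N^2H_{N,d})$ factor absent in two dimensions, and this factor is what forces the short-time restriction in \cref{cor:main}. Careful tracking of this loss through the near-diagonal bookkeeping, followed by optimising the four scales at the very end, will yield \eqref{eq:kprop}.
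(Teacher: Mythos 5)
Your list of ingredients is the right one (mollification at scale $\ep_2$, minimal separation from conservation of $H_{N,d}$ plus \cref{lem:count}, smearing at the scales $r_{i,\ep_1}$, the stress-energy identity with $\|\nabla v_{\ep_2}\|_{L^\infty}\lesssim \|v\|_{LL}\ln(\ep_2^{-1})$, and \cref{cor:grad_H}), but the way you have wired them together has two genuine gaps. First, you propose to dispose of the residual $v-v_{\ep_2}$ ``uniformly,'' i.e.\ by factoring out $\|v-v_{\ep_2}\|_{L^\infty}\lesssim\|v\|_{LL}\ep_2\ln(\ep_2^{-1})$. That works for the $\mu$ cross terms and for particle pairs with $|x_i-x_j|>\ep_3$, but it cannot close for near-diagonal pairs: you are left with $\sum_{|x_i-x_j|\leq\ep_3}|(\nabla\g)(x_i-x_j)|\sim\sum|x_i-x_j|^{-(d-1)}$, which is one order more singular than the quantity $\sum_{|x_i-x_j|\le \ep_3}\g(x_i-x_j)$ that \cref{lem:count} controls, and smearing the Diracs does not remove this extra power (nor is the replacement of $\d_{x_i}$ by a smeared mass free of error for the rough integrand $(v-v_{\ep_2})(x)\cdot\nabla\g(x-y)$). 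Converting $|\nabla\g|$ into $\g$ by the minimal-separation bound costs a factor $(N^2H_{N,d})^{1/(d-2)}$, which is polynomial rather than logarithmic and destroys the stated estimate. The paper's \cref{lem:kprop_error} avoids this precisely by symmetrizing and using the log-Lipschitz \emph{difference} $(v-v_{\ep_2})(x_i)-(v-v_{\ep_2})(x_j)$ on the close pairs, which cancels one power of the singularity and leaves only the logarithm that the minimal-separation bound converts into $\ln_+(N^2H_{N,d})$; the sup-norm bound is used only off-diagonal and against $\mu$. (Your device of extracting the $\ln_+(N^2H_{N,d})\Fr_N^{avg}$ term from the near-diagonal part of $v_{\ep_2}$ instead of $v-v_{\ep_2}$ is harmless in itself, since $\|v_{\ep_2}\|_{LL}\le\|v\|_{LL}$, but it does not repair the residual.)

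Second, you split $I_{\ep_2}$ into the regions $|x-y|\le\ep_3$ and $|x-y|>\ep_3$ \emph{before} invoking the stress-energy formulation, and then apply the tensor identity to the outer piece. \cref{lem:SE_div} is an identity for the full kernel $\nabla\g$; it does not apply to the truncated kernel $1_{|x-y|>\ep_3}\nabla\g(x-y)$, and ``smearing each Dirac'' in that region is not a single benign step but exactly the content of the renormalization and recombination analysis (\cref{lem:kprop_renorm,lem:kprop_diag,lem:kprop_recomb}): one must pass to the limit of vanishing smearing, compare stress tensors at the scales $\ue_N$ and $\ur_{N,\ep_1}$, and estimate the resulting cross terms, which requires a \emph{second} close/far dichotomy at the scale $\ep_4$ in which the near-diagonal contribution is handled with $\|\nabla v_{\ep_2}\|_{L^\infty}\lesssim\|v\|_{LL}\ln(\ep_2^{-1})$, the elementary bound $|x\cdot\nabla\g(x)|\lesssim_d\g(x)$, and \cref{lem:count} — this is where the companion factor $\ln(\ep_2^{-1})\Fr_N^{avg}$ and the terms $N\tl{\g}(\ep_4)$, $N^2\|\mu\|_{L^p}\ep_4^{(2p-d)/p}$, $\ep_1\ln(\ep_1^{-1})$, $\ep_1^{1-d/p}$ in \eqref{eq:kprop} actually come from. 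In the paper the $\ep_3$-split is applied only to the residual, and the mollified form is processed whole; as written, your architecture either applies the tensor identity where it is not valid or double-counts the inner region, and the quantitative recombination error estimate that constitutes the bulk of the proof is not supplied.
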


As in our previous treatment of the 2D Coulomb case \cite{Rosenzweig2020_PVMF}, our method of proof is inspired by that of \cite[Proposition 2.3]{Serfaty2020}, but requires a more sophisticated analysis, as we no longer have at our disposal the assumption that $\nabla v\in L^\infty(\R^d;(\R^d)^{\otimes 2})$. To make the overall argument modular and easier for the reader to digest, we have divided the proof into several lemmas corresponding to the main steps of the argument, which we recall are
\begin{enumerate}[(S1)]
\item\label{item:moll}
Mollification,
\item\label{item:renorm}
Renormalization,
\item\label{item:diag}
Analysis of Diagonal Terms,
\item\label{item:recomb}
Recombination,
\item\label{item:conc}
Conclusion.
\end{enumerate}
Since we included an extended discussion in our 2D work, we focus here on \ref{item:moll}, as it is the main source of new difficulty, and refer the reader to the beginning of \cite[Section 4.1]{Rosenzweig2020_PVMF}, as well as Serfaty's original article \cite{Serfaty2020}, for more on the modulated energy method.

In \ref{item:moll}, we introduce a parameter $0<\ep_2\ll 1$ and replace the vector field $v$ with the mollified vector field $v_{\ep_2} \coloneqq v\ast\chi_{\ep_2}$, for a standard approximate identity $\chi_{\ep_2}(x)\coloneqq {\ep_2}^{-d}\chi({\ep_2}^{-1}x)$. We now need to estimate the quantity
\begin{equation}
\left|\int_{(\R^d)^2\setminus\D_2}\paren*{v-v_{\ep_2}}(x) \cdot (\nabla\g)(x-y) d(\sum_{i=1}^N\d_{x_i}-N\mu)(x)d(\sum_{i=1}^N\d_{x_i}-N\mu)(y)\right|
\end{equation}
Expanding the integral into four terms, we concentrate on the term
\begin{equation}
\frac{1}{2}\sum_{1\leq i\neq j\leq N} \paren*{(v-v_{\ep_2})(x_i)-(v-v_{\ep_2})(x_j)}\cdot (\nabla\g)(x_i-x_j),
\end{equation}
which we have put in symmetrized form. As in the proof of \cite[Lemma 4.2]{Rosenzweig2020_PVMF}, we want to split the sum into ``close'' and ``far'' particle pairs with threshold $1\gg\ep_3\gg \ep_2$:
\begin{equation}
\sum_{1\leq i\neq j\leq N} = \sum_{{1\leq i\neq j\leq N}\atop {|x_i-x_j|<\ep_3}} + \sum_{{1\leq i\neq j\leq N}\atop {|x_i-x_j|\geq \ep_3}}.
\end{equation}
The pairs of close particles are the problematic contribution. In the 2D Coulomb case, we can use the log-Lipschitz regularity of $v-v_{\ep_2}$ to estimate
\begin{equation}
\sum_{{1\leq i\neq j\leq N}\atop{|x_i-x_j|<\ep_3}} \left|\paren*{(v-v_{\ep_2})(x_i)-(v-v_{\ep_2})(x_j)}\cdot (\nabla\g)(x_i-x_j)\right| \lesssim \|v\|_{LL(\R^d)}\sum_{{1\leq i\neq j\leq N}\atop{|x_i-x_j|<\ep_3}} \g(x_i-x_j),
\end{equation}
and we can then estimate the right-hand side in terms of the modulated energy through the 2D analogue of \cref{lem:count}. However, for the 3D+ Coulomb case, we already saw in the introduction that following the same argument leads to
\begin{equation}
\sum_{{1\leq i\neq j\leq N}\atop{|x_i-x_j|<\ep_3}} \left|\paren*{(v-v_{\ep_2})(x_i)-(v-v_{\ep_2})(x_j)}\cdot (\nabla\g)(x_i-x_j)\right| \lesssim_d \|v\|_{LL(\R^d)}\sum_{{1\leq i\neq j\leq N}\atop{|x_i-x_j|<\ep_3}} \g(x_i-x_j)\ln|x_i-x_j|^{-1},
\end{equation}
which is not amenable to an application of \cref{lem:count}. To get rid of the log factor, we crudely use the minimal separation bound $|x_i-x_j| \gtrsim N^{-2}$ coming from conservation of energy, and then apply \cref{lem:count}. This introduces an inefficiency into the argument, ultimately leading to the short-times restriction.

\subsection{Stress-Energy Tensor}\label{ssec:kprop_SET}
We recall the definition of the stress-energy tensor. For functions $\varphi,\psi\in C_{loc}^1(\R^d)$, we define their \emph{stress-energy tensor} $\{\comm{\varphi}{\psi}_{SE}^{ij}\}_{i,j=1}^d$ to be the $d\times d$ matrix with entries
\begin{equation}
\label{eq:se_ten_def}
\comm{\varphi}{\psi}_{SE}^{ij} \coloneqq \paren*{\p_i\varphi\p_j\psi + \p_j\varphi\p_i\psi} - \d_{ij}\nabla\varphi\cdot\nabla\psi,
\end{equation}
where $\d_{ij}$ is the Kronecker delta function. By a density argument, it follows that the stress-energy tensor is well-defined in $L^2(\R^d; (\R^d)^{\otimes 2})$ for any functions $\varphi,\psi\in \dot{H}^1(\R^d)$. By direct computation, we have the divergence identity
\begin{equation}
\p_i\comm{\varphi}{\psi}_{SE}^{ij} = \D \varphi \p_j\psi + \D\psi\p_j\varphi \qquad j\in\{1,\ldots,d\},
\end{equation}
for any $\varphi,\psi \in \dot{H}^1(\R^d)\cap \dot{H}^2(\R^d)$, where we have used the convention of Einstein summation in index $i$. The following lemma from \cite{Serfaty2020} is used extensively in the sequel. Note the requirement that the vector field be Lipschitz, not log-Lipschitz, which will necessitate a mollification in order to apply the lemma.

\begin{lemma}[{\cite[Lemma 4.3]{Serfaty2020}}]
\label{lem:SE_div}
Let $v\in W^{1,\infty}(\R^d;\R^d)$. For any measures $\mu,\nu\in \M(\R^d)$, such that
\begin{equation}
\int_{\R^d}(|(\nabla\g\ast |\mu|)(x)|^2 + |(\nabla\g\ast|\nu|)(x)|^2)dx<\infty,
\end{equation}
we have the identity
\begin{equation}
\int_{(\R^d)^2} \paren*{v(x)-v(y)}\cdot(\nabla\g)(x-y)d\mu(x)d\nu(y) = \int_{\R^d}(\nabla v)(x) : \comm{\g\ast\mu}{\g\ast\nu}_{SE}(x)dx.
\end{equation}
\end{lemma}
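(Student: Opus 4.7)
\emph{Plan of proof.} The strategy is to reduce to smooth, compactly supported measures via approximation, then to recast both sides in terms of the Coulomb potentials $u\coloneqq\g\ast\mu$ and $w\coloneqq\g\ast\nu$ using the distributional identities $-\D u=\mu$ and $-\D w=\nu$, and finally to apply the divergence identity for the stress-energy tensor recalled immediately before the lemma, followed by a single integration by parts. The hypothesis $\int_{\R^d}(|\nabla\g\ast|\mu||^2+|\nabla\g\ast|\nu||^2)dx<\infty$ is precisely the statement that $u,w\in\dot{H}^1(\R^d)$; mollifying and truncating $\mu,\nu$ produces sequences $\mu_n,\nu_n\in C_c^\infty(\R^d)$ with $\g\ast\mu_n\to u$ and $\g\ast\nu_n\to w$ in $\dot{H}^1(\R^d)$. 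Since $\comm{u}{w}_{SE}$ is bilinear in $(\nabla u,\nabla w)$, Cauchy--Schwarz yields continuity of the RHS under this approximation. Continuity of the LHS follows from the reformulation in the next paragraph (or, alternatively, by cutting off $\nabla\g$ near the diagonal, applying the same identity, and passing to the limit, using $v\in W^{1,\infty}$ together with $\dot{H}^{-1}\times\dot{H}^{-1}$ continuity of the resulting bilinear form). Thus it suffices to establish the identity for $\mu,\nu\in C_c^\infty(\R^d)$.

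\emph{Reduction to a divergence calculation.} For smooth, compactly supported $\mu,\nu$, Fubini's theorem and the fact that $\nabla\g$ is odd give $\int(\nabla\g)(x-y)d\nu(y)=\nabla w(x)$ and $\int(\nabla\g)(x-y)d\mu(x)=-\nabla u(y)$. Splitting the double integral yields
\begin{equation*}
\int_{(\R^d)^2}\paren*{v(x)-v(y)}\cdot(\nabla\g)(x-y)d\mu(x)d\nu(y)=\int_{\R^d}v\cdot\nabla w\,d\mu+\int_{\R^d}v\cdot\nabla u\,d\nu.
\end{equation*}
Substituting $d\mu=-\D u\,dx$ and $d\nu=-\D w\,dx$ rewrites this as
\begin{equation*}
-\int_{\R^d}v^j\paren*{\D u\,\p_j w+\D w\,\p_j u}\,dx.
\end{equation*}
By the divergence identity stated immediately above the lemma, the parenthesized quantity equals $\p_i\comm{u}{w}_{SE}^{ij}$ (summed in $i$), so the LHS of the lemma becomes $-\int_{\R^d}v^j\p_i\comm{u}{w}_{SE}^{ij}\,dx$.

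\emph{Final integration by parts and the main obstacle.} A formal integration by parts in $x_i$ converts the last display into $\int_{\R^d}\p_iv^j\,\comm{u}{w}_{SE}^{ij}\,dx=\int_{\R^d}(\nabla v):\comm{u}{w}_{SE}\,dx$, which is the desired RHS (the symmetry of $\comm{u}{w}_{SE}$ in $(i,j)$ makes this insensitive to the convention used for $\nabla v$). The main technical obstacle is justifying this step, since $v\in W^{1,\infty}$ does not decay at infinity and so a priori boundary terms need not vanish. I would handle them by inserting a smooth cutoff $\eta_R\in C_c^\infty(\R^d)$ with $\eta_R\equiv 1$ on $B(0,R)$, $\supp\eta_R\subset B(0,2R)$, and $\|\nabla\eta_R\|_{L^\infty}\lesssim R^{-1}$, performing the integration by parts against $v\eta_R$ (which has compact support and bounded gradient), and then sending $R\to\infty$. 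The cutoff correction
\begin{equation*}
-\int_{\R^d}v^j(\p_i\eta_R)\,\comm{u}{w}_{SE}^{ij}\,dx
\end{equation*}
is bounded by $R^{-1}\|v\|_{L^\infty(\R^d)}\|\comm{u}{w}_{SE}\|_{L^1(\R^d)}$, and $\comm{u}{w}_{SE}\in L^1(\R^d;(\R^d)^{\otimes 2})$ by Cauchy--Schwarz and $u,w\in\dot{H}^1(\R^d)$, so the correction vanishes in the limit. Combined with the density argument of the first paragraph, this establishes the identity in the stated generality.
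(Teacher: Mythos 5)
The paper itself does not prove this lemma (it cites \cite[Lemma 4.3]{Serfaty2020}), so I am judging your argument on its own terms. Your core computation is the right one and matches the standard route: for smooth data, Fubini plus oddness of $\nabla\g$ gives the splitting $\int v\cdot\nabla w\,d\mu+\int v\cdot\nabla u\,d\nu$, substituting $\mu=-\D u$, $\nu=-\D w$ and invoking the divergence identity $\p_i\comm{u}{w}_{SE}^{ij}=\D u\,\p_j w+\D w\,\p_j u$ reduces everything to one integration by parts, and your cutoff argument for that integration by parts (using $\comm{u}{w}_{SE}\in L^1$ by Cauchy--Schwarz and $\|\nabla\eta_R\|_{L^\infty}\lesssim R^{-1}$) is correct; the sign bookkeeping checks out.

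The genuine gap is in the approximation step, specifically the convergence of the \emph{left-hand side} along your mollified/truncated sequence, which is the only delicate point of the lemma and which you dispose of in one parenthetical. Your first justification --- that LHS continuity ``follows from the reformulation'' --- cannot work as stated: the reformulation requires splitting the double integral, hence absolute convergence of $\int|\nabla\g(x-y)|\,d|\mu|(x)\,d|\nu|(y)$, i.e.\ finiteness of the $(d-1)$-Riesz mutual energy, which is \emph{not} implied by the finite Coulomb energy hypothesis (take $\mu=\nu=\sigma_{\p B(0,1)}$: the energy is finite but the near-diagonal integral of $|x-y|^{1-d}$ over the sphere diverges; note that sphere measures $\d_{x_i}^{(\alpha_i)}$ are exactly the measures this lemma is applied to in the paper). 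Moreover $\nabla w$ is only defined Lebesgue-a.e., so $\int v\cdot\nabla w\,d\mu$ need not even make sense for singular $\mu$; knowing that $\mathrm{LHS}(\mu_n,\nu_n)=\mathrm{RHS}(\mu_n,\nu_n)$ converges does not tell you it converges to $\mathrm{LHS}(\mu,\nu)$, which is precisely what must be shown. Your second justification is too vague: cutting off $\nabla\g$ near the diagonal requires a \emph{uniform in $n$} smallness estimate for the near-diagonal remainder $\int_{|x-y|\le\delta}\g(x-y)\,d|\mu_n|\,d|\nu_n|$, which you never address, and the claimed $\dot H^{-1}\times\dot H^{-1}$ continuity of the truncated form is not substantiated. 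The gap is repairable: first note the LHS is absolutely convergent under the hypotheses, since $|(v(x)-v(y))\cdot(\nabla\g)(x-y)|\lesssim\|\nabla v\|_{L^\infty}\g(x-y)$ and the mutual Coulomb energy of $|\mu|,|\nu|$ is controlled by $\|\nabla\g\ast|\mu|\|_{L^2}\|\nabla\g\ast|\nu|\|_{L^2}$; then mollify with a \emph{radial} kernel, so that by superharmonicity $\g\ast\rho\le\g$ for radial probability densities $\rho$, the doubly mollified integrand is dominated pointwise by $C\|\nabla v\|_{L^\infty}\g(x-y)$, and dominated convergence gives $\mathrm{LHS}(\mu_n,\nu_n)\to\mathrm{LHS}(\mu,\nu)$. (With this route the compact-support truncation is unnecessary: bounded integrable densities already make the Fubini splitting absolutely convergent and your $\eta_R$ argument go through, which also spares you the claim that truncation preserves $\dot H^1$ convergence of the potentials, another assertion you left unproved.)
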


\subsection{Step 1: Mollification}
\label{ssec:kprop_moll}
Let $\chi\in C_c^\infty(\R^d)$ be a radial, nonincreasing bump function satisfying
\begin{equation}
\label{eq:chi}
\int_{\R^d}\chi(x) dx=1, \quad 0\leq \chi\leq 1, \quad \chi(x) = \begin{cases} 1, & {|x|\leq \frac{1}{4}} \\ 0, & {|x|>1} \end{cases}.
\end{equation}
For $\ep>0$, set
\begin{equation}
\label{eq:chi_v_ep}
\chi_\ep(x)\coloneqq \ep^{-d}\chi(x/\ep) \qquad \text{and} \qquad  v_\ep(x) \coloneqq (\chi_\ep\ast v)(x),
\end{equation}
where the convolution $\chi_{\ep}\ast v$ is performed component-wise. Evidently, $v_\vep$ is $C^\infty(\R^d;\R^d)$ and
\begin{equation}
\|\nabla^k v_\ep\|_{L^\infty(\R^d)} \lesssim_{k,d} \ep^{-k}, \qquad \forall k\in\N_0.
\end{equation}
The next lemma controls the error from replacing $v$ with $v_{\ep}$ in the left-hand side of inequality \eqref{eq:kprop} in terms of $\Fr_N(\ux_N,\mu)$.

\begin{lemma}
\label{lem:kprop_error}
There exists a constant $C_{p,d}>0$ such that for every $0<\ep_2,\ep_3\ll 1$, we have the estimate
\begin{equation}
\label{eq:LHS_split}
\begin{split}	
&\left|\int_{(\R^d)^2\setminus\D_2}\paren*{v-v_{\ep_2}}(x) \cdot (\nabla\g)(x-y) d(\sum_{i=1}^N\d_{x_i}-N\mu)(x)d(\sum_{i=1}^N\d_{x_i}-N\mu)(y)\right| \\
&\lesssim_d \|v\|_{LL(\R^d)}\ln_+(N^2H_{N,d})\paren*{\Fr_N(\ux_N,\mu) + N\tl{\g}(\ep_3)+C_{p,d}N^2\|\mu\|_{L^p(\R^d)}{\ep_3}^{(2p-d)/p}} \\
&\ph+N^2\|v\|_{LL(\R^d)}\ep_2|\ln\ep_2|\paren*{\frac{1}{\ep_3^{d-1}}+\|\mu\|_{L^p(\R^d)}^{(d-1)p/d(p-1)}}.
\end{split}
\end{equation}
\end{lemma}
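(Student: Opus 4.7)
The plan is to set $w \coloneqq v-v_{\ep_2}$ and $\nu_N\coloneqq \sum_{i=1}^N\d_{x_i}-N\mu$, and then proceed in three main steps after an initial symmetrization. Since $(\nabla\g)$ is odd, swapping $x\leftrightarrow y$ lets us rewrite the left-hand side as
\begin{equation*}
\frac{1}{2}\int_{(\R^d)^2\setminus\D_2} [w(x)-w(y)]\cdot(\nabla\g)(x-y)d\nu_N(x)d\nu_N(y).
\end{equation*}
Expanding $\nu_N\otimes\nu_N$ produces a particle-particle sum, a mixed term, and a continuum-continuum integral. The last of these vanishes by antisymmetry under $x\leftrightarrow y$, reducing the task to bounding the first two contributions.

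For the particle-particle sum $\frac{1}{2}\sum_{i\neq j}[w(x_i)-w(x_j)]\cdot(\nabla\g)(x_i-x_j)$, I would split at the threshold $\ep_3$. On close pairs $|x_i-x_j|<\ep_3$, a direct computation moving the difference inside the convolution integral shows $\|v_{\ep_2}\|_{LL(\R^d)}\leq\|v\|_{LL(\R^d)}$, so that $\|w\|_{LL(\R^d)}\lesssim\|v\|_{LL(\R^d)}$; combined with $|(\nabla\g)(z)|\lesssim_d|z|^{1-d}$, this produces
\begin{equation*}
|[w(x_i)-w(x_j)]\cdot(\nabla\g)(x_i-x_j)|\lesssim_d \|v\|_{LL(\R^d)}\g(x_i-x_j)\ln|x_i-x_j|^{-1}.
\end{equation*}
Since the Hamiltonian is conserved and $a_i=1/N$, each pairwise energy satisfies $\g(x_i-x_j)\leq 2N^2 H_{N,d}$ in the repulsive case, giving the minimum-separation bound $|x_i-x_j|\gtrsim_d(N^2 H_{N,d})^{-1/(d-2)}$ and hence $\ln|x_i-x_j|^{-1}\lesssim_d \ln_+(N^2 H_{N,d})+O(1)$. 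Inserting this and invoking the counting \cref{lem:count} on $\sum_{|x_i-x_j|<\ep_3}\g(x_i-x_j)$ yields the first line of the claimed bound. On far pairs $|x_i-x_j|\geq\ep_3$ I would instead combine the uniform bound $\|w\|_{L^\infty(\R^d)}\lesssim\|v\|_{LL(\R^d)}\ep_2|\ln\ep_2|$ from \cref{lem:conv_bnds} with $|(\nabla\g)(x_i-x_j)|\lesssim_d\ep_3^{1-d}$ and the trivial count of at most $N^2$ pairs.

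For the mixed contribution $-N\sum_i\int[w(x_i)-w(y)]\cdot(\nabla\g)(x_i-y)d\mu(y)$, I would bound the bracket crudely by $2\|w\|_{L^\infty(\R^d)}$ and use $|(\nabla\g)(x_i-y)|\lesssim_d |x_i-y|^{1-d}$ to dominate the integrand pointwise by a constant times the Riesz $1$-potential of $\mu$ evaluated at $x_i$. Since $d<p\leq\infty$ and $\|\mu\|_{L^1(\R^d)}=1$, \cref{lem:Linf_RP} (equivalently the gradient part of \cref{lem:PE_bnds}) gives $\||\nabla\g|\ast\mu\|_{L^\infty(\R^d)}\lesssim_{d,p}\|\mu\|_{L^p(\R^d)}^{p(d-1)/(d(p-1))}$, and summing over the $N$ particles produces the remaining term $N^2\|v\|_{LL(\R^d)}\ep_2|\ln\ep_2|\|\mu\|_{L^p(\R^d)}^{p(d-1)/(d(p-1))}$.

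The main obstacle is the extra logarithm $\ln|x_i-x_j|^{-1}$ produced in the close-pair estimate: in the 2D setting of \cite{Rosenzweig2020_PVMF} it is absorbed automatically into $\g(x_i-x_j)=-\frac{1}{2\pi}\ln|x_i-x_j|$, but in dimensions $d\geq 3$ the algebraic singularity of $\g$ is too strong for such absorption, and the only available replacement is the coarse minimal-separation bound from conservation of $H_{N,d}$. This is precisely the source of the $\ln_+(N^2 H_{N,d})$ prefactor in the stated bound and, propagated through \cref{prop:kprop}, of the short-time restriction in \cref{cor:main} highlighted in \cref{rem:short_t}.
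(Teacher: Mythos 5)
Your argument is, in substance, the paper's own proof: the same expansion into particle--particle, cross, and continuum pieces, the same close/far splitting at $\ep_3$, the same use of $\|v_{\ep_2}\|_{LL}\leq\|v\|_{LL}$ plus the minimal-separation bound $|x_i-x_j|\gtrsim_d (N^2H_{N,d})^{-1/(d-2)}$ from conservation of $H_{N,d}$ followed by \cref{lem:count} on close pairs, the same $\ep_2|\ln\ep_2|\cdot\ep_3^{1-d}$ bound on far pairs, and the same potential-theory estimate $\|\nabla\g\ast\mu\|_{L^\infty(\R^d)}\lesssim_{d,p}\|\mu\|_{L^p(\R^d)}^{(d-1)p/d(p-1)}$ for the terms involving $\mu$. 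The only structural difference (global symmetrization at the start versus symmetrizing only the particle--particle sum, as the paper does) is cosmetic.

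There is, however, one step that is wrong as written: the claim that the continuum--continuum term vanishes ``by antisymmetry under $x\leftrightarrow y$.'' After symmetrization the integrand $\paren*{w(x)-w(y)}\cdot(\nabla\g)(x-y)$ is \emph{even} under the swap $x\leftrightarrow y$, since both factors change sign; equivalently, in unsymmetrized form this term is $N^2\int_{\R^d} w(x)\cdot(\nabla\g\ast\mu)(x)\,d\mu(x)$, which has no reason to vanish for a general log-Lipschitz $v$ and $\mu\in\P(\R^d)\cap L^p(\R^d)$ (indeed, the paper estimates it rather than discarding it). The omission does not sink the lemma, because the dropped term is controlled by exactly the tools you already use for the mixed term: bound it by $N^2\|v-v_{\ep_2}\|_{L^\infty(\R^d)}\|\nabla\g\ast\mu\|_{L^\infty(\R^d)}\lesssim_{d,p} N^2\|v\|_{LL(\R^d)}\ep_2|\ln\ep_2|\,\|\mu\|_{L^p(\R^d)}^{(d-1)p/d(p-1)}$, via \cref{lem:conv_bnds} and \cref{lem:PE_bnds}, which is absorbed into the last line of \eqref{eq:LHS_split}. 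You should replace the false cancellation claim with this estimate; with that correction the proof is complete and matches the paper's.
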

\begin{proof}
We split the left-hand side of \eqref{eq:LHS_split} into a sum of four terms and estimate each separately.
\begin{itemize}[leftmargin=*]
\item
Observe that
\begin{align}
&\left|\int_{(\R^d)^2\setminus\D_2} (v-v_{\ep_2})(x)\cdot(\nabla\g)(x-y)d(\sum_{i=1}^N\d_{x_i})(x)d(\sum_{i=1}^N\d_{x_i})(x)\right| \nn\\
&= \left|\sum_{1\leq i\neq j\leq N} (v-v_{\ep_2})(x_i)\cdot(\nabla\g)(x_i-x_j)\right| \nn\\
&= \left|\frac{1}{2}\sum_{1\leq i\neq j\leq N} \paren*{(v-v_{\ep_2})(x_i)-(v-v_{\ep_2})(x_j)} \cdot(\nabla\g)(x_i-x_j)\right|, \label{eq:ti_cl_far}
\end{align}
where the ultimate equality follows from swapping $i$ and $j$ and using that $(\nabla\g)(x-y)=-(\nabla\g)(y-x)$. For given $\ep_3>0$, the triangle inequality implies
\begin{align}
\label{eq:moll_cl_far}
\eqref{eq:ti_cl_far} &\lesssim \left|\sum_{{1\leq i\neq j\leq N}\atop{|x_i-x_j|\leq \ep_3}}\paren*{(v-v_{\ep_2})(x_i)-(v-v_{\ep_2})(x_j)} \cdot(\nabla\g)(x_i-x_j)\right| \nn\\
&\ph + \left|\sum_{{1\leq i\neq j\leq N}\atop{|x_i-x_j|> \ep_3}}\paren*{(v-v_{\ep_2})(x_i)-(v-v_{\ep_2})(x_j)} \cdot(\nabla\g)(x_i-x_j)\right| \nn\\
&\eqqcolon \mathrm{Term}_1 + \mathrm{Term}_2.
\end{align}
By another application of the triangle inequality, followed by using that $\|v_{\ep_2}\|_{LL(\R^d)}\leq \|v\|_{LL(\R^d)}$, together with the elementary bound
\begin{equation}
|(\nabla\g)(x_i-x_j)| \lesssim_d \frac{1}{|x_i-x_j|^{d-1}},
\end{equation}
we find that
\begin{align}
\mathrm{Term}_1 &\lesssim_d \sum_{{1\leq i\neq j\leq N}\atop{|x_i-x_j|\leq\ep_3}} \|v\|_{LL(\R^d)}\g(x_i-x_j)\ln |x_i-x_j|^{-1}.
\end{align}
The get rid of the log factor, we use the conservation of the $N$-body Hamiltonian $H_{N,d}$ from \eqref{eq:Cou_sys_ham} to obtain the crude minimal separation estimate\footnote{This bound will ultimately be the source of the short-time restriction in \cref{thm:main}.}
\begin{equation}
\min_{1\leq i\neq j\leq N} |x_i-x_j| \gtrsim_d (N^2 H_{N,d})^{-1/(d-2)}.
\end{equation}
Hence,
\begin{align}
&\sum_{{1\leq i\neq j\leq N}\atop{|x_i-x_j|\leq\ep_3}} \|v\|_{LL(\R^d)}\g(x_i-x_j)\ln |x_i-x_j|^{-1} \nn\\
&\lesssim_d \|v\|_{LL(\R^d)}\ln_+(N^2H_{N,d})\sum_{{1\leq i\neq j\leq N}\atop{|x_i-x_j|\leq \ep_3}} \g(x_i-x_j) \nn \\
&\lesssim \|v\|_{LL(\R^d)}\ln_+(N^2H_{N,d})\paren*{\Fr_N(\ux_N,\mu) + N\tl{\g}(2\ep_3) + C_{p,d} N^2\|\mu\|_{L^p(\R^d)}\ep_3^{(2p-d)/p} },
\end{align}
where the ultimate inequality follows by application of \cref{lem:count}.

For $\mathrm{Term}_2$, we observe the bound
\begin{equation}
|(\nabla\g)(x_i-x_j)| \lesssim_d \ep_3^{-d+1},
\end{equation}
which is immediate from the definition of $\nabla\g$ and the separation of $x_i$ and $x_i$, and the bound
\begin{equation}
|(v-v_{\ep_2})(x_i)-(v-v_{\ep_2})(x_j)| \lesssim \|v\|_{LL(\R^d)}\ep_2|\ln\ep_2|,
\end{equation}
which follows from estimate \eqref{eq:v_diff_Linf} of \cref{lem:conv_bnds} and the triangle inequality, in order to obtain that
\begin{equation}
\mathrm{Term}_2 \lesssim_d \sum_{{1\leq i\neq j\leq N}\atop {|x_i-x_j|>\ep_3}} \frac{\|v\|_{LL(\R^d)}\ep_2 |\ln\ep_2|}{\ep_3^{d-1}} \leq \frac{N^2\|v\|_{LL(\R^d)}\ep_2|\ln\ep_2|}{\ep_3^{d-1}}.
\end{equation}
\item
Next, observe that by the Fubini-Tonelli theorem and H\"older's inequality,
\begin{align}
\left|\int_{(\R^d)^2\setminus\D_2} (v-v_{\ep_2})(x)\cdot(\nabla\g)(x-y)d(N\mu)(x)d(N\mu)(y)\right| &= N^2\left|\int_{\R^d} (v-v_{\ep_2})(x)\cdot(\nabla\g\ast\mu)(x)d\mu(x)\right| \nn\\
&\leq N^2\|v-v_{\ep_2}\|_{L^\infty(\R^d)}\|\nabla\g\ast\mu\|_{L^\infty(\R^d)}.
\end{align}
Applying \cref{lem:conv_bnds} to the first factor and \cref{lem:PE_bnds} to the second factor in the ultimate line, we find that
\begin{equation}
\label{eq:L_inf_mu_d}
N^2\|v-v_{\ep_2}\|_{L^\infty(\R^d)}\|\nabla\g\ast\mu\|_{L^\infty(\R^d)} \lesssim_{p,d} N^2\ep_2|\ln\ep_2| \|v\|_{LL(\R^d)} \|\mu\|_{L^p(\R^d)}^{(d-1)p/d(p-1)}.
\end{equation}
\item
Next, observe that by the Fubini-Tonelli theorem,
\begin{align}
\left|\int_{(\R^d)^2\setminus\D_2}(v-v_{\ep_2})(x)\cdot\nabla\g(x-y)d(\sum_{i=1}^N\d_{x_i})(x)d(N\mu)(y)\right| &= N\left|\sum_{i=1}^N (v-v_{\ep_2})(x_i) \cdot (\nabla\g\ast\mu)(x_i)\right| \nn\\
&\lesssim_d N^2\ep_2|\ln\ep_2|\|v\|_{LL(\R^d)} \|\mu\|_{L^{p}(\R^d)}^{(d-1)p/d(p-1)},
\end{align}
again by using \Cref{lem:PE_bnds,lem:conv_bnds}.
\item
Finally, by proceeding similarly as to the previous case,
\begin{align}
\left|\int_{(\R^d)^2\setminus\D_2}(v-v_{\ep_2})(x)\cdot\nabla\g(x-y)d(\sum_{i=1}^N\d_{x_i})(y)d(N\mu)(x)\right| &= N\left|\sum_{i=1}^N(\nabla\g\ast\paren*{(v-v_{\ep_2})\mu})(x_i)\right| \nn\\
&\lesssim N^2\ep_2|\ln\ep_2|\|v\|_{LL(\R^d)} \|\mu\|_{L^{p}(\R^d)}^{(d-1)p/d(p-1)}.
\end{align}
\end{itemize}
\end{proof}

\subsection{Step 2: Renormalization}
\label{ssec:kprop_renorm}
We now proceed to step 2 of the proof, working with the mollified vector field $v_{\ep_2}$. Crucially, this step relies on the qualitative assumption that $v_{\ep_2}$ is Lipschitz.

\begin{lemma}\label{lem:kprop_renorm}
It holds that
\begin{equation}
\label{eq:kprop_S2_id}
\begin{split}
&\int_{(\R^d)^2\setminus\D_2} (v_{\ep_2}(x)-v_{\ep_2}(y))\cdot (\nabla\g)(x-y)d(\sum_{i=1}^N\d_{x_i}-N\mu)(x)d(\sum_{i=1}^N\d_{x_i}-N\mu)(y)\\
&=\lim_{|\ue_N|\rightarrow 0} \paren*{\int_{\R^d} (\nabla v_{\ep_2})(x): \comm{H_{N,\ue_N}^{\mu,\ux_N}}{H_{N,\ue_N}^{\mu,\ux_N}}_{SE}(x)dx - \sum_{i=1}^N\int_{(\R^d)^2} (v_{\ep_2}(x)-v_{\ep_2}(y))\cdot(\nabla\g)(x-y)\d_{x_i}^{(\eta_i)}(x)\d_{x_i}^{(\eta_i)}(y)}.
\end{split}
\end{equation}
\end{lemma}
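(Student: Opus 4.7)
The plan is to apply Lemma~\ref{lem:SE_div} to the smeared signed measure $\mu_N^{\ue_N} \coloneqq \sum_{i=1}^N \d_{x_i}^{(\eta_i)} - N\mu$ together with the smooth, Lipschitz vector field $v_{\ep_2}$, and then to pass to the limit $|\ue_N|\to 0$, explicitly subtracting the self-interaction contributions that fail to vanish. This is a direct analogue of the renormalization arguments used in \cref{prop:CE} and \cite[Proposition 3.3]{Serfaty2020}, now with a vector-field weight instead of just the Coulomb kernel.

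First I would verify the hypotheses of \cref{lem:SE_div}. Since $v$ is bounded and $\chi_{\ep_2}\in C_c^\infty$, the mollification $v_{\ep_2}$ is smooth with bounded derivatives, hence belongs to $W^{1,\infty}(\R^d;\R^d)$. For the integrability condition $\int_{\R^d}|\nabla\g\ast|\mu_N^{\ue_N}||^2\,dx<\infty$, I would use the triangle inequality $|\mu_N^{\ue_N}|\leq \sum_i \d_{x_i}^{(\eta_i)}+N\mu$ together with the explicit identity~\eqref{eq:g_eta_grad_id}, which makes each $\nabla\g\ast\d_{x_i}^{(\eta_i)}(\cdot)=\nabla\g_{\eta_i}(\cdot-x_i)$ an $L^2$ function for $d\geq 3$, and \cref{lem:PE_bnds}, which gives $\nabla\g\ast\mu\in L^\infty$ with $|x|^{-(d-1)}$ decay at infinity and hence in $L^2$ when $d\geq 3$. \cref{lem:SE_div} then yields
\begin{equation*}
\int_{\R^d}(\nabla v_{\ep_2})(x):\comm{H_{N,\ue_N}^{\mu,\ux_N}}{H_{N,\ue_N}^{\mu,\ux_N}}_{SE}(x)\,dx = \int_{(\R^d)^2}F(x,y)\,d\mu_N^{\ue_N}(x)\,d\mu_N^{\ue_N}(y),
\end{equation*}
where $F(x,y)\coloneqq (v_{\ep_2}(x)-v_{\ep_2}(y))\cdot(\nabla\g)(x-y)$ is symmetric under $x\leftrightarrow y$.

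Subtracting the diagonal self-interactions and expanding the product measure gives
\begin{equation*}
\int F\,d\mu_N^{\ue_N}\otimes d\mu_N^{\ue_N} \;-\; \sum_{i=1}^N\int F\,d\d_{x_i}^{(\eta_i)}\otimes d\d_{x_i}^{(\eta_i)} = \sum_{i\neq j}\int F\,d\d_{x_i}^{(\eta_i)}\otimes d\d_{x_j}^{(\eta_j)} - 2N\sum_{i=1}^N\int F\,d\d_{x_i}^{(\eta_i)}\otimes d\mu + N^2\int F\,d\mu\otimes d\mu,
\end{equation*}
and I would send $|\ue_N|\to 0$ term-by-term. For each pair $i\neq j$, $F$ is continuous on a neighborhood of $(x_i,x_j)$ since $x_i\neq x_j$ and $\d_{x_i}^{(\eta_i)}\otimes\d_{x_j}^{(\eta_j)}$ is supported in a shrinking neighborhood of $(x_i,x_j)$, so that cross term converges to $F(x_i,x_j)$. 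For the mixed terms, the map $x\mapsto\int F(x,y)\,d\mu(y) = v_{\ep_2}(x)\cdot(\nabla\g\ast\mu)(x) - (\nabla\g\ast(v_{\ep_2}\mu))(x)$ is continuous by \cref{lem:PE_bnds}, so each term converges to $\int F(x_i,y)\,d\mu(y)$. The $N^2$ term is independent of $\ue_N$ and finite, since $|F(x,y)|\lesssim_d \|\nabla v_{\ep_2}\|_{L^\infty}|x-y|^{-(d-2)}$ is integrable against $\mu\otimes\mu$ by Hardy--Littlewood--Sobolev (using $\mu\in L^1\cap L^p$ with $p>d$). The resulting sum is exactly the left-hand side of~\eqref{eq:kprop_S2_id}, because $\mu$ is atomless and so removing the diagonal $\D_2$ from $\mu\otimes\mu$ costs nothing. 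The main technical step is the $L^2$-integrability check needed to invoke \cref{lem:SE_div}; everything else is a direct expansion and a dominated-convergence pass to the limit.
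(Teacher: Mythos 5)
Your proposal is correct and follows essentially the same route as the paper's proof (which, via \cite[Lemma 4.3]{Rosenzweig2020_PVMF}, also applies \cref{lem:SE_div} to $\sum_{i}\d_{x_i}^{(\eta_i)}-N\mu$ with the Lipschitz field $v_{\ep_2}$, subtracts the diagonal self-interactions, and passes to the limit $|\ue_N|\to 0$ using continuity away from the diagonal). The only point to tighten is the $L^2$ check for $\nabla\g\ast\mu$: the claimed pointwise decay $|x|^{-(d-1)}$ is not guaranteed for non-compactly supported $\mu$, but $\nabla\g\ast\mu\in L^2(\R^d)$ follows directly from \cref{prop:HLS} since $\mu\in L^1\cap L^p$ with $p>d$ interpolates into $L^{2d/(d+2)}(\R^d)$.
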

\begin{proof}
See the proof of \cite[Lemma 4.3]{Rosenzweig2020_PVMF}, which carries over to the $d\geq 3$ case \emph{mutatis mutandis}.
\end{proof}

\subsection{Step 3: Diagonal Terms}
\label{ssec:kprop_diag}
The following lemma from \cite{Serfaty2020}, provides an identity which analyzes how the diagonal contribution (i.e. the second term in the right-hand side) in identity \eqref{eq:kprop_S2_id} varies as the truncation vector $\ue_N$ varies.

\begin{lemma}
\label{lem:kprop_diag}
Let $\ua_N,\ue_N\in (\R_+)^N$, such that $\al_i\geq \et_i$ for every $i\in\{1,\ldots,N\}$. Then for each $i$,
\begin{equation}
\label{eq:diag_zero_id}
\int_{(\R^d)^2}\paren*{v_{\ep_2}(x)-v_{\ep_2}(y)}\cdot(\nabla\g)(x-y)d\d_{x_i}^{(\alpha_i)}(x)d(\d_{x_i}^{(\eta_i)}-\d_{x_i}^{(\alpha_i)})(y) = 0,
\end{equation}
and consequently,
\begin{equation}
\begin{split}
&\int_{(\R^d)^2}\paren*{v_{\ep_2}(x)-v_{\ep_2}(y)}\cdot(\nabla\g)(x-y)\paren*{d\d_{x_i}^{(\eta_i)}(x)d\d_{x_i}^{(\eta_i)}(y)-d\d_{x_i}^{(\alpha_i)}(x)d\d_{x_i}^{(\alpha_i)}(y)} \\
&=\int_{\R^d}(\nabla v_{\ep_2})(x):\comm{\f_{\alpha_i,\eta_i}(\cdot-x_i)}{\f_{\alpha_i,\eta_i}(\cdot-x_i)}_{SE}(x)dx,
\end{split}
\end{equation}
where we recall the definition of $\f_{\al_i,\et_i}$ from \eqref{eq:f_et_al_def}.
\end{lemma}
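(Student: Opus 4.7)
The plan is to reduce both identities to applications of \cref{lem:SE_div}, combined with the convolution identities $\g\ast\d_{x_i}^{(r)} = \g_r(\cdot-x_i)$ and $\g\ast(\d_{x_i}^{(\eta_i)}-\d_{x_i}^{(\alpha_i)}) = \f_{\alpha_i,\eta_i}(\cdot-x_i)$ from \eqref{eq:g_conv_smear} and \eqref{eq:f_conv_smear}. The integrability hypothesis of \cref{lem:SE_div} is easy to verify: by \cref{lem:g_id}, $|\nabla\g_r(x)| \lesssim_d |x|^{1-d} 1_{|x|\geq r}(x)$, so a direct polar-coordinate computation gives $\nabla\g\ast\d_{x_i}^{(r)} \in L^2(\R^d;\R^d)$ for $d\geq 3$, and this transfers to each signed measure considered below. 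The mollified field $v_{\ep_2}$ lies in $W^{1,\infty}(\R^d;\R^d)$ by \cref{lem:conv_bnds}, so \cref{lem:SE_div} is applicable.

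For the vanishing identity \eqref{eq:diag_zero_id}, apply \cref{lem:SE_div} with $\mu = \d_{x_i}^{(\alpha_i)}$ and $\nu = \d_{x_i}^{(\eta_i)} - \d_{x_i}^{(\alpha_i)}$ to rewrite the integral as
\begin{equation}
\int_{\R^d}(\nabla v_{\ep_2})(x) : \comm{\g_{\alpha_i}(\cdot-x_i)}{\f_{\alpha_i,\eta_i}(\cdot-x_i)}_{SE}(x)\,dx.
\end{equation}
By \cref{lem:g_id}, $\nabla\g_{\alpha_i}(\cdot-x_i)$ is supported on $\{|x-x_i|\geq\alpha_i\}$. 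A direct computation from the definition \eqref{eq:f_et_al_def}---keeping in mind that our subscripts $\f_{\alpha_i,\eta_i}$ with $\alpha_i\geq\eta_i$ are ordered oppositely to the canonical case of \eqref{eq:f_grad_et_al_id}---shows that $\nabla\f_{\alpha_i,\eta_i}(\cdot-x_i)$ is supported on the annulus $\{\eta_i\leq|x-x_i|\leq\alpha_i\}$. These two supports meet only on the measure-zero sphere $\{|x-x_i|=\alpha_i\}$, so every entry of the stress--energy tensor, which by \eqref{eq:se_ten_def} is a bilinear combination of products of first derivatives of its two arguments, vanishes almost everywhere, and the integral is zero.

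For the consequent identity, set $\tau_i \coloneqq \d_{x_i}^{(\eta_i)} - \d_{x_i}^{(\alpha_i)}$. Expanding the difference of product measures,
\begin{equation}
d\d_{x_i}^{(\eta_i)}(x)\,d\d_{x_i}^{(\eta_i)}(y) - d\d_{x_i}^{(\alpha_i)}(x)\,d\d_{x_i}^{(\alpha_i)}(y) = d\tau_i(x)\,d\d_{x_i}^{(\alpha_i)}(y) + d\d_{x_i}^{(\alpha_i)}(x)\,d\tau_i(y) + d\tau_i(x)\,d\tau_i(y).
\end{equation}
Since $(v_{\ep_2}(x)-v_{\ep_2}(y))\cdot(\nabla\g)(x-y)$ is invariant under the swap $x\leftrightarrow y$ (both factors change sign), the two cross terms integrate to the same quantity, which by \eqref{eq:diag_zero_id} already equals zero. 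Only the $\tau_i\otimes\tau_i$ piece survives, and a final application of \cref{lem:SE_div} with $\mu=\nu=\tau_i$, together with $\g\ast\tau_i = \f_{\alpha_i,\eta_i}(\cdot-x_i)$, yields the stated stress--energy formula.

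The proof is essentially routine once \cref{lem:SE_div} and the potential-theoretic identities of \cref{sec:CE} are in hand; the only points demanding some care are the $L^2$ integrability check in dimensions $d\geq 3$ and the correct identification of $\supp(\nabla\f_{\alpha_i,\eta_i})$ under the reversed subscript ordering, which together ensure the clean cancellation of the first cross-integral.
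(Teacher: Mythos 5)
Your proof is correct, and it follows essentially the same route as the argument the paper defers to (Serfaty's Steps 2--3): rewrite both double integrals via \cref{lem:SE_div} using $\g\ast\d_{x_i}^{(r)}=\g_r(\cdot-x_i)$ and $\g\ast(\d_{x_i}^{(\eta_i)}-\d_{x_i}^{(\alpha_i)})=\f_{\alpha_i,\eta_i}(\cdot-x_i)$, and use that $\nabla\g_{\alpha_i}(\cdot-x_i)$ and $\nabla\f_{\alpha_i,\eta_i}(\cdot-x_i)$ have a.e.\ disjoint supports to kill the cross terms, leaving only the $\tau_i\otimes\tau_i$ contribution. Your explicit $L^2$ check of the smeared potentials for $d\geq 3$ and the care with the reversed subscript ordering of $\f_{\alpha_i,\eta_i}$ are exactly the right points to verify.
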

\begin{proof}
See \cite[Section 4: Steps 2 and 3]{Serfaty2020}.
\end{proof}

\subsection{Step 4: Recombination}
\label{ssec:kprop_recomb}
Combining \cref{lem:kprop_renorm} from step 2 and \cref{lem:kprop_diag} from step 3 shows that for any $\ua_N\in (\R_+)^N$, such that $\al_i>\et_i$ for every $i\in\{1,\ldots,N\}$, it holds that
\begin{equation}
\label{eq:S4_pref}
\begin{split}
&\int_{(\R^d)^2\setminus\D_2} (v_{\ep_2}(x)-v_{\ep_2}(y))\cdot (\nabla\g)(x-y)d(\sum_{i=1}^N\d_{x_i}-N\mu)(x)d(\sum_{i=1}^N\d_{x_i}-N\mu)(y)\\
&\ph-\paren*{\int_{\R^d} (\nabla v_{\ep_2})(x): \comm{H_{N,\ua_N}^{\mu,\ux_N}}{H_{N,\ua_N}^{\mu,\ux_N}}_{SE}(x)dx - \sum_{i=1}^N\int_{(\R^d)^2} (v_{\ep_2}(x)-v_{\ep_2}(y))\cdot(\nabla\g)(x-y)\d_{x_i}^{(\al_i)}(x)\d_{x_i}^{(\al_i)}(y)} \\
&=\lim_{|\ue_N|\rightarrow 0} \Bigg(\int_{\R^d} (\nabla v_{\ep_2})(x): \paren*{\comm{H_{N,\ue_N}^{\mu,\ux_N}}{H_{N,\ue_N}^{\mu,\ux_N}}_{SE}-\comm{H_{N,\ua_N}^{\mu,\ux_N}}{H_{N,\ua_N}^{\mu,\ux_N}}_{SE}}(x)dx \\
&\ph \hspace{25mm} - \sum_{i=1}^N\int_{\R^d}(\nabla v_{\ep_2})(x):\comm{\f_{\alpha_i,\eta_i}}{\f_{\alpha_i,\eta_i}}_{SE}(x-x_i)dx\Bigg)
\end{split}
\end{equation}
In step 4, we analyze how the first term in the third line varies as $|\ua_N|, |\ue_N| \rightarrow 0$. We follow our earlier 2D analysis for the proof of \cite[Lemma 4.5]{Rosenzweig2020_PVMF} as much as possible, but due to the difference between the 2D and 3D+ Coulomb potentials previously encountered in \eqref{eq:Cou_pot_iss}, relying on the log-Lipschitz regularity of $v_{\ep_2}$ will not suffice. Instead, we purchase the Lipschitz regularity of the mollified vector field $v_{\ep_2}$ at the cost of a factor of $\ln\ep_2^{-1}$, allowing us to use the elementary identity
\begin{equation}
|x\cdot\nabla\g(x)| \lesssim_d \g(x).
\end{equation}

\begin{lemma}
\label{lem:kprop_recomb}
Define the parameters
\begin{equation}
\label{eq:r_def}
r_{i,\ep_1} \coloneqq \min\{\frac{1}{4}\min_{{1\leq j\leq N}\atop{i\neq j}}|x_i-x_j|, \ep_1\}, \qquad \ur_{N,\ep_1} \coloneqq (r_{1,\ep_1},\ldots,r_{N,\ep_1}).
\end{equation}
If $\ua_N,\eta_N\in(\R_+)^N$ are such that $\eta_i<\alpha_i\leq r_{i,\ep_1}$, for every $i\in\{1,\ldots,N\}$, then
\begin{equation}
\label{eq:recomb}
\begin{split}
&\int_{\R^d} (\nabla v_{\ep_2})(x) : \paren*{\comm{H_{N,\ul{\eta}_N}^{\mu,\ux_N}}{H_{N,\ul{\eta}_N}^{\mu,\ux_N}}_{SE} - \comm{H_{N,\ua_N}^{\mu,\ux_N}}{H_{N,\ua_N}^{\mu,\ux_N}}_{SE}}(x)dx \\
&= \sum_{i=1}^N \int_{\R^d} (\nabla v_{\ep_2})(x): \comm{\f_{\al_i,\et_i}}{\f_{\al_i,\et_i}}_{SE}(x-x_i)dx  + \mathrm{Error}_{\ep_2,\ua_N,\ue_N},
\end{split}
\end{equation}
where there exist constants $C_{p,d},C_{\infty,d}>0$ such that
\begin{equation}
\label{eq:recomb_err_bnd}
\begin{split}
\left|\mathrm{Error}_{\ep_2,\ua_N,\ue_N}\right| &\lesssim_{d} \|v\|_{LL(\R^d)}\ln(\ep_2^{-1}) \paren*{\Fr_N(\ux_N,\mu) + N\tl{\g}(\ep_4) + C_{p,d} N^2 \|\mu\|_{L^p(\R^d)} {\ep_4}^{\frac{(2p-d)}{p}}} \\
&\ph + N\sum_{i=1}^N \al_i\paren*{\frac{\|v\|_{L^\infty(\R^d)}}{{\ep_4}^d} +\frac{\|v\|_{{LL}(\R^d)} |\ln \al_i|}{\ep_4^{d-1}}} +N\|v\|_{{LL}(\R^d)} \|\mu\|_{L^p(\R^d)}^{\frac{(d-1)p}{d(p-1)}}\sum_{i=1}^N \al_i\ln(\al_i^{-1}) \\
&\ph + N\|v\|_{L^\infty(\R^d)}\sum_{i=1}^N \paren*{C_{p,d}\|\mu\|_{L^p(\R^d)}\al_i^{1-\frac{d}{p}} + \|\mu\|_{L^\infty(\R^d)}\al_i\ln (\al_i^{-1})1_{\geq \infty}(p)}.
\end{split}
\end{equation}
\end{lemma}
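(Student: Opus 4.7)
The plan is to use the affine decomposition $H_{N,\ul{\eta}_N}^{\mu,\ux_N} = H_{N,\ua_N}^{\mu,\ux_N} + F$ with $F(x):=\sum_{i=1}^N\f_{\al_i,\et_i}(x-x_i)$, a consequence of \eqref{eq:HN_trun_def} and \eqref{eq:f_conv_smear}. The bilinearity and symmetry of the stress--energy tensor then give
\begin{equation*}
\comm{H_{N,\ul{\eta}_N}^{\mu,\ux_N}}{H_{N,\ul{\eta}_N}^{\mu,\ux_N}}_{SE} - \comm{H_{N,\ua_N}^{\mu,\ux_N}}{H_{N,\ua_N}^{\mu,\ux_N}}_{SE} = \comm{F}{F}_{SE} + 2\comm{H_{N,\ua_N}^{\mu,\ux_N}}{F}_{SE},
\end{equation*}
so contracting with $\nabla v_{\ep_2}$ and integrating decomposes the left-hand side of \eqref{eq:recomb} into a ``pure'' $F$--$F$ piece, which I will show yields the claimed main term, and a ``cross'' piece, which will become $\mathrm{Error}_{\ep_2,\ua_N,\ue_N}$.

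For the pure piece, expand $\comm{F}{F}_{SE}=\sum_{i,j}\comm{\f_{\al_i,\et_i}(\cdot-x_i)}{\f_{\al_j,\et_j}(\cdot-x_j)}_{SE}$. By \eqref{eq:f_grad_et_al_id}, $\nabla\f_{\al_i,\et_i}$ is supported in $\{|x|\leq\al_i\}$, and the hypothesis $\al_i\leq r_{i,\ep_1}\leq \tfrac14\min_{j\neq i}|x_i-x_j|$ makes the balls $\{B(x_i,\al_i)\}$ pairwise disjoint, so every off-diagonal term vanishes pointwise. The surviving diagonal part is exactly $\sum_i\comm{\f_{\al_i,\et_i}}{\f_{\al_i,\et_i}}_{SE}(\cdot-x_i)$, which gives the main term on the right of \eqref{eq:recomb}.

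For the cross piece, apply \cref{lem:SE_div} with $\sigma:=\sum_j\d_{x_j}^{(\al_j)}-N\mu$ and $\tau:=-\sum_i(\d_{x_i}^{(\al_i)}-\d_{x_i}^{(\et_i)})$ to rewrite
\begin{equation*}
2\int(\nabla v_{\ep_2})(x):\comm{H_{N,\ua_N}^{\mu,\ux_N}}{F}_{SE}(x)\,dx = 2\int_{(\R^d)^2}(v_{\ep_2}(x)-v_{\ep_2}(y))\cdot\nabla\g(x-y)\,d\sigma(x)\,d\tau(y).
\end{equation*}
Expand $\sigma,\tau$ into four types of contributions. The $i=j$ smeared-Dirac term vanishes by \eqref{eq:diag_zero_id} of \cref{lem:kprop_diag} applied to the Lipschitz field $v_{\ep_2}$. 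For the $i\neq j$ smeared-Dirac terms, split particle pairs at the threshold $\ep_4$: when $|x_i-x_j|\leq\ep_4$, combine $\|\nabla v_{\ep_2}\|_{L^\infty}\lesssim_d\|v\|_{LL(\R^d)}\ln(\ep_2^{-1})$ from \cref{lem:conv_bnds} with the clean identity $|(x-y)\cdot\nabla\g(x-y)|\lesssim_d\g(x-y)$ (which holds in every dimension, in contrast to the log-Lipschitz version \eqref{eq:Cou_pot_iss}) to reduce the contribution to a multiple of $\ln(\ep_2^{-1})\|v\|_{LL(\R^d)}\sum_{|x_i-x_j|\leq\ep_4}\g(x_i-x_j)$, and invoke \cref{lem:count}; this produces the first line of \eqref{eq:recomb_err_bnd}. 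For $|x_i-x_j|>\ep_4$, use that each $\nu_i:=\d_{x_i}^{(\al_i)}-\d_{x_i}^{(\et_i)}$ has zero total mass: Taylor-expanding the other factor in $y$ around $x_i$ gains a factor of $\al_i$ in exchange for one derivative on $\nabla\g$, producing terms of size $\al_i/\ep_4^d$ (paired with $\|v\|_{L^\infty}$) or $\al_i\ln(\al_i^{-1})/\ep_4^{d-1}$ (paired with $\|\nabla v_{\ep_2}\|_{L^\infty}\lesssim_d\|v\|_{LL(\R^d)}\ln(\al_i^{-1})$), which is the second line. For the $N\mu$-cross terms, dualize and invoke \cref{lem:PE_bnds}: the bound $\|\nabla(\g\ast\mu)\|_{L^\infty}\lesssim_{d,p}\|\mu\|_{L^p}^{(d-1)p/(d(p-1))}$ together with the Taylor-gained $\al_i$ and $\|\nabla v_{\ep_2}\|_{L^\infty}$ gives the third line, while pairing $\|\nabla\f_{\et_i,\al_i}\|_{L^{p'}}\lesssim_{p,d}\al_i^{1-d/p}$ (from \cref{lem:f_et_al_bnds}) with $\|\mu\|_{L^p}$ and $\|v\|_{L^\infty}$ gives the fourth line, with the endpoint $p=\infty$ requiring the log-Lipschitz continuity of $\nabla(\g\ast\mu)$ via \cref{lem:PE_bnds} and \cref{lem:LL_embed} to produce the $\|\mu\|_{L^\infty}\al_i\ln(\al_i^{-1})1_{\geq\infty}(p)$ term.

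The main obstacle is the close-pair smeared-Dirac estimate in dimension $d\geq 3$: the 2D argument in \cite{Rosenzweig2020_PVMF} used only the log-Lipschitz regularity of $v$ directly, but here the failure of the clean bound in \eqref{eq:Cou_pot_iss} forces us to buy the Lipschitz regularity of $v_{\ep_2}$ at the cost of a $\ln(\ep_2^{-1})$ factor. This logarithmic loss is what produces the $\ln(\ep_2^{-1})$ prefactor on $\Fr_N(\ux_N,\mu)$ in \eqref{eq:recomb_err_bnd} and, propagating through \cref{sec:MR}, is the ultimate source of the short-time restriction in \cref{cor:main}.
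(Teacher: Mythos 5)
Your proposal is correct and follows essentially the same route as the paper: the same affine decomposition of the truncated potentials with the off-diagonal stress-energy terms killed by the disjointness of the balls $B(x_i,\al_i)$ (via $\al_i\leq r_{i,\ep_1}$), conversion of the cross term through \cref{lem:SE_div}, removal of the $i=j$ contribution via \eqref{eq:diag_zero_id}, the close/far split at $\ep_4$ with $\|\nabla v_{\ep_2}\|_{L^\infty}\lesssim \|v\|_{LL}\ln(\ep_2^{-1})$ and \cref{lem:count} for close pairs, and \cref{lem:conv_bnds}, \cref{lem:PE_bnds}, \cref{lem:f_et_al_bnds} for the remaining terms. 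Your phrasing of the far-pair and $N\mu$ estimates via the zero total mass of $\d_{x_i}^{(\et_i)}-\d_{x_i}^{(\al_i)}$ plus a Taylor expansion is just an equivalent repackaging of the paper's explicit change of variables to the unit sphere, and yields the same error terms.
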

\begin{proof}
It follows from identity \eqref{eq:g_conv_smear} that
\begin{align}
H_{N,\ua_N}^{\mu,\ux_N} &= H_{N,\ue_N}^{\mu,\ux_N} +\sum_{i=1}^N \f_{\alpha_i,\eta_i}(\cdot-x_i).
\end{align}
Observe that $\g_{\eta_i}(\cdot-x_i) = \g_{\alpha_i}(\cdot-x_i)$ outside the ball $B(x_i,\alpha_i)$ (recall that $\al_i>\et_i$ by assumption). Also observe that the closed balls $\ol{B}(x_i,\alpha_i)$ are pairwise disjoint. To see this property, observe that
\begin{equation}
\al_i \leq r_{i,\ep_1} \leq \frac{|x_i-x_j|}{4} \quad \text{and} \quad \al_j \leq r_{j,\ep_1} \leq \frac{|x_i-x_j|}{4}, \qquad \forall i\neq j,
\end{equation}
by definition of $r_{i,\ep_1},r_{j,\ep_1}$ and requirement that $\al_i\leq r_{i,\ep_1}$ and $\al_j\leq r_{j,\ep_1}$. Thus, we find that
\begin{equation}
H_{N,\ua_N}^{\mu,\ux_N}(x) = H_{N,\ue_N}^{\mu,\ux_N}(x), \qquad \forall x\in \R^d\setminus\bigcup_{i=1}^N B(x_i,\alpha_i).
\end{equation}
So by the bilinearity of the stress-energy tensor, we have the point-wise a.e. identity
\begin{equation}
\label{eq:se_diff_id}
\begin{split}
\comm{H_{N,\ua_N}^{\mu,\ux_N}}{H_{N,\ua_N}^{\mu,\ux_N}}_{SE} - \comm{H_{N,\ue_N}^{\mu,\ux_N}}{H_{N,\ue_N}^{\mu,\ux_N}}_{SE} &= \sum_{i=1}^N \comm{\f_{\alpha_i,\eta_i}}{\f_{\alpha_i,\eta_i}}_{SE}1_{B(x_i,\alpha_i)} \\
&\ph + 2\sum_{i=1}^N\comm{\f_{\alpha_i,\eta_i}}{H_{N,\ue_N}^{\mu,\ux_N}}_{SE}1_{B(x_i,\alpha_i)},
\end{split}
\end{equation}
where we use that the terms $\comm{\f_{\alpha_i,\eta_i}}{\f_{\alpha_j,\eta_j}}_{SE}$, for $i\neq j$, vanish due to $\ol{B}(x_i,\alpha_i)\cap \ol{B}(x_j,\alpha_j)=\emptyset$ and similarly, $\comm{\f_{\alpha_j,\eta_j}}{H_{N,\ue_N}^{\mu,\ux_N}}_{SE}1_{B(x_i,\alpha_i)}\equiv 0$. Thus, using identity \eqref{eq:se_diff_id}, we see that
\begin{align}
&\int_{\R^d}(\nabla v_{\ep_2})(x) : \paren*{\comm{H_{N,\ua_N}^{\mu,\ux_N}}{H_{N,\ua_N}^{\mu,\ux_N}}_{SE} - \comm{H_{N,\ue_N}^{\mu,\ux_N}}{H_{N,\ue_N}^{\mu,\ux_N}}_{SE}}(x)dx \nn\\
&=\sum_{i=1}^N \int_{B(x_i,\alpha_i)} (\nabla v_{\ep_2})(x): \paren*{\comm{\f_{\alpha_i,\eta_i}}{\f_{\alpha_i,\eta_i}}_{SE}+2\comm{\f_{\alpha_i,\eta_i}}{H_{N,\ue_N}^{\mu,\ux_N}}_{SE}}(x)dx \nn\\
&=\sum_{i=1}^N\int_{\R^d}(\nabla v_{\ep_2})(x) :\paren*{\comm{\f_{\alpha_i,\eta_i}}{\f_{\alpha_i,\eta_i}}_{SE}+2\comm{\f_{\alpha_i,\eta_i}}{H_{N,\ue_N}^{\mu,\ux_N}}_{SE}}(x)dx,  \label{eq:recomb_nte}
\end{align}
where the ultimate line follows from the fact that $\nabla\f_{\alpha_i,\eta_i}$ vanishes outside $\ol{B}(x_i,\alpha_i)$. Comparing this last expression to the right-hand side of identity \eqref{eq:recomb}, we see that we only need to estimate the modulus of
\begin{equation}
\label{eq:recomb_split_pre}
\begin{split}
&2\sum_{i=1}^N \int_{\R^d}(\nabla v_{\ep_2})(x) : \comm{\f_{\alpha_i,\eta_i}}{H_{N,\ue_N}^{\mu,\ux_N}}_{SE}(x)dx\\
&=2\sum_{i=1}^N \int_{(\R^d)^2}\paren*{v_{\ep_2}(x)-v_{\ep_2}(y)}\cdot (\nabla\g)(x-y)d(\sum_{j=1}^N\d_{x_j}^{(\alpha_j)}-N\mu)(x)d(\d_{x_i}^{(\eta_i)}-\d_{x_i}^{(\alpha_i)})(y),
\end{split}
\end{equation}
where the right-hand side follows from an application of \cref{lem:SE_div}. Note that by identity \eqref{eq:diag_zero_id},
\begin{equation}
\label{eq:recomb_split}
\eqref{eq:recomb_split_pre} = 2\sum_{i=1}^N \int_{(\R^d)^2}\paren*{v_{\ep_2}(x)-v_{\ep_2}(y)}\cdot (\nabla\g)(x-y)d(\sum_{{1\leq j\leq N} \atop {j\neq i}}\d_{x_j}^{(\alpha_j)}-N\mu)(x)d(\d_{x_i}^{(\eta_i)}-\d_{x_i}^{(\alpha_i)})(y),
\end{equation}
which splits into a sum of two terms defined below:
\begin{align}
\mathrm{Term}_1 &\coloneqq 2\sum_{i=1}^N \int_{(\R^d)^2}v_{\ep_2}(x)\cdot(\nabla\g)(x-y)d(\sum_{{1\leq j\leq N}\atop {j\neq i}}\d_{x_j}^{(\alpha_j)}-N\mu)(x)d(\d_{x_i}^{(\eta_i)}-\d_{x_i}^{(\alpha_i)})(y),  \label{eq:recomb_T1_def}\\
\mathrm{Term}_2 &\coloneqq -2\sum_{i=1}^N\int_{(\R^d)^2} v_{\ep_2}(y)\cdot(\nabla\g)(x-y)d(\sum_{{1\leq j\leq N}\atop {j\neq i}}\d_{x_j}^{(\alpha_j)}-N\mu)(x)d(\d_{x_i}^{(\eta_i)}-\d_{x_i}^{(\alpha_i)})(y). \label{eq:recomb_T2_def}
\end{align}

\begin{description}[leftmargin=*]
\item[Estimate for $\mathrm{Term}_1$]
We observe that by using the Fubini-Tonelli theorem to first integrate out the $y$-variable and then applying identity \eqref{eq:f_conv_smear},
\begin{equation}
\mathrm{Term}_1 = 2\sum_{i=1}^N\int_{\R^d}v_{\ep_2}(x)\cdot (\nabla\f_{\alpha_i,\eta_i})(x-x_i)d(\sum_{{1\leq j\leq N}\atop {j\neq i}}\d_{x_j}^{(\alpha_j)}-N\mu)(x).
\end{equation}
Since $\supp(\nabla\f_{\alpha_i,\eta_i}(\cdot-x_i))\subset \ol{B}(x_i,\alpha_i)$ and $\supp(\d_{x_j}^{(\alpha_j)})\subset \ol{B}(x_j,\alpha_j)$, which is disjoint from $\ol{B}(x_i,\alpha_i)$, for $j\neq i$, we observe the cancellation
\begin{equation}
\mathrm{Term}_1 = -2N\sum_{i=1}^N\int_{\R^d}v_{\ep_2}(x)\cdot(\nabla\f_{\alpha_i,\eta_i})(x)d\mu(x).
\end{equation}
By triangle and H\"older's inequalities
\begin{equation}
\label{eq:T1_Hold}
\left|-2N\sum_{i=1}^N\int_{\R^d}v_{\ep_2}(x)\cdot(\nabla\f_{\alpha_i,\eta_i})(x)d\mu(x)\right| \lesssim N\sum_{i=1}^N \|v_{\ep_2}\|_{L^{p_1}(\R^d)} \|\nabla\f_{\alpha_i,\eta_i}\|_{L^{p_2}(\R^d)} \|\mu\|_{L^{p_3}(\R^d)},
\end{equation}
where $1\leq p_1,p_2,p_3\leq \infty$ are H\"older conjugate. We choose $p_1=\infty$ and $p_3=p$ and apply \cref{lem:f_et_al_bnds} to $\|\nabla\f_{\alpha_i,\eta_i}\|_{L^{p_2}(\R^d)}$, obtaining
\begin{equation}
\eqref{eq:T1_Hold} \lesssim_{p,d} N\sum_{i=1}^N\alpha_i^{(p-d)/p} \|v_{\ep_2}\|_{L^\infty(\R^d)} \|\mu\|_{L^{p}(\R^d)}.
\end{equation}
Using that $\|v_{\ep_2}\|_{L^\infty(\R^d)}\leq \|v\|_{L^\infty(\R^d)}$ by \cref{lem:conv_bnds}, we conclude that
\begin{equation}
\label{eq:recomb_T1_fin}
|\mathrm{Term}_1| \lesssim_{p,d} N\|v\|_{L^\infty(\R^d)}\|\mu\|_{L^p(\R^d)}\sum_{i=1}^N\al_i^{(p-d)/p},
\end{equation}
which completes the analysis for $\mathrm{Term}_1$.

\item[Estimate for $\mathrm{Term}_2$]
For $\mathrm{Term}_2$ \eqref{eq:recomb_T2_def}, we use Fubini-Tonelli to first integrate out the $x$-variable together with identity \eqref{eq:g_conv_smear}, finding
\begin{equation}
\begin{split}
\mathrm{Term}_2 &= 2\sum_{1\leq i\neq j\leq N}\int_{\R^d} v_{\ep_2}(y)\cdot(\nabla\g_{\alpha_j})(y-x_j)d(\d_{x_i}^{(\eta_i)}-\d_{x_i}^{(\alpha_i)})(y) \\
&\ph - 2\sum_{1\leq i\neq j\leq N}\int_{\R^d}v_{\ep_2}(y)\cdot(\nabla\g\ast\mu)(y)d(\d_{x_i}^{(\eta_i)}-\d_{x_i}^{(\alpha_i)})(y).
\end{split}
\end{equation}
Adding and subtracting $v_{\ep_2}(x_i)$ and $v_{\ep_2}(x_j)$, we can write
\begin{equation}
\begin{split}
&2\sum_{1\leq i\neq j\leq N}\int_{\R^d} v_{\ep_2}(y)\cdot(\nabla\g_{\alpha_j})(y-x_j)d(\d_{x_i}^{(\eta_i)}-\d_{x_i}^{(\alpha_i)})(y) \\
&= \underbrace{2\sum_{1\leq i\neq j\leq N}\int_{\R^d} \paren*{v_{\ep_2}(y)-v_{\ep_2}(x_j)}\cdot(\nabla\g_{\alpha_j})(y-x_j)d(\d_{x_i}^{(\eta_i)}-\d_{x_i}^{(\alpha_i)})(y)}_{\eqqcolon \mathrm{Term}_{2,1}} \\
&\ph + \underbrace{2\sum_{1\leq i\neq j\leq N}\int_{\R^d}v_{\ep_2}(x_j)\cdot(\nabla\g_{\alpha_j})(y-x_j)d(\d_{x_i}^{(\eta_i)}-\d_{x_i}^{(\alpha_i)})(y)}_{\eqqcolon \mathrm{Term}_{2,2}}
\end{split}
\end{equation}
and
\begin{equation}
\begin{split}
&2\sum_{1\leq i\neq j\leq N}\int_{\R^d}v_{\ep_2}(y)\cdot(\nabla\g\ast\mu)(y)d(\d_{x_i}^{(\eta_i)}-\d_{x_i}^{(\alpha_i)})(y)\\
&= \underbrace{2\sum_{1\leq i\neq j\leq N}\int_{\R^d}\paren*{v_{\ep_2}(y)-v_{\ep_2}(x_i)}\cdot(\nabla\g\ast\mu)(y)d(\d_{x_i}^{(\eta_i)}-\d_{x_i}^{(\alpha_i)})(y)}_{\eqqcolon\mathrm{Term}_{2,3}}\\
&\ph +\underbrace{2\sum_{1\leq i\neq j\leq N}\int_{\R^d}v_{\ep_2}(x_i)\cdot(\nabla\g\ast\mu)(y)d(\d_{x_i}^{(\eta_i)}-\d_{x_i}^{(\alpha_i)})(y)}_{\eqqcolon\mathrm{Term}_{2,4}},
\end{split}
\end{equation}
so that
\begin{equation}
\mathrm{Term}_2 = \mathrm{Term}_{2,1} + \mathrm{Term}_{2,2} - \mathrm{Term}_{2,3} - \mathrm{Term}_{2,4}.
\end{equation}
We now estimate $\mathrm{Term}_{2,1},\ldots,\mathrm{Term}_{2,4}$ individually, as itemized below.
\begin{description}[leftmargin=*]
\item[Estimate for $\mathrm{Term}_{2,1}$]
We split the sum $\sum_{1\leq i\neq j\leq N}$ into ``close'' and ``far'' particle pairs $(i,j)$ with distance threshold $\ep_4$:
\begin{align}
\mathrm{Term}_{2,1} &= 2\sum_{{1\leq i\neq j\leq N}\atop {|x_i-x_j|\leq\ep_4}}\int_{\R^d} \paren*{v_{\ep_2}(y)-v_{\ep_2}(x_j)}\cdot(\nabla\g_{\alpha_j})(y-x_j)d(\d_{x_i}^{(\eta_i)}-\d_{x_i}^{(\alpha_i)})(y)\nn\\
&\ph + 2\sum_{{1\leq i\neq j\leq N}\atop{|x_i-x_j|>\ep_4}}\int_{\R^d} \paren*{v_{\ep_2}(y)-v_{\ep_2}(x_j)}\cdot(\nabla\g_{\alpha_j})(y-x_j)d(\d_{x_i}^{(\eta_i)}-\d_{x_i}^{(\alpha_i)})(y) \nn\\
&\eqqcolon \mathrm{Term}_{2,1,1} + \mathrm{Term}_{2,1,2}.
\end{align}

For $\mathrm{Term}_{2,1,1}$, we use that $\|\nabla v_{\ep_2}\|_{L^\infty(\R^d)}\leq \|v\|_{LL(\R^d)}\ln \ep_2^{-1}$ by \cref{lem:conv_bnds} in order to obtain that
\begin{align}
|\mathrm{Term}_{2,1,1}| &\leq \|v\|_{LL(\R^d)}\ln(\ep_2^{-1})\sum_{{1\leq i\neq j\leq N}\atop{|x_i-x_j|\leq\ep_4}}\int_{\R^d} |y-x_j| |(\nabla\g_{\alpha_j})(y-x_j)|d(\d_{x_i}^{(\et_i)}+\d_{x_i}^{(\al_i)})(y) \nn\\
&\lesssim_d \|v\|_{LL(\R^d)}\ln(\ep_2^{-1})\sum_{{1\leq i\neq j\leq N}\atop {|x_i-x_j|\leq\ep_4}}\int_{\R^d}\g(y-x_j)d(\d_{x_i}^{(\eta_i)}+\d_{x_i}^{(\al_i)})(y), \label{eq:T_211_pre}
\end{align}
where the ultimate inequality follows from using the bound
\begin{equation}
|(\nabla\g_{\alpha_j})(y-x_j)| \lesssim_d \frac{1}{|y-x_j|^{d-1}}.
\end{equation}
Since for $i\neq j$, we have that $|x_i-x_j|\geq 4r_{i,\ep_1}$, by definition of $r_{i,\ep_1}$, and that $r_{i,\ep_1}\geq \alpha_i>\eta_i$, by assumption, the reverse triangle inequality and the decreasing property of $\g$ imply that
\begin{equation}
\g(y-x_j) \leq \tl{\g}(|x_i-x_j| - \frac{1}{4}|x_i-x_j|) \lesssim_d \g(x_i-x_j), \qquad \forall y\in \p B(x_i,\alpha_i)\cup\p B(x_i,\eta_i).
\end{equation}
Hence,
\begin{equation}
\eqref{eq:T_211_pre} \lesssim_d \|v\|_{LL(\R^d)}\ln(\ep_2^{-1})\sum_{{1\leq i\neq j\leq N}\atop{|x_i-x_j|\leq\ep_4}} \g(x_i-x_j).
\end{equation}
Applying \cref{lem:count}, we conclude that
\begin{equation}
\label{eq:T_211_fin}
|\mathrm{Term}_{2,1,1}|\lesssim_d \|v\|_{LL(\R^d)} \ln(\ep_2^{-1}) \paren*{\Fr_N(\ux_N,\mu) + N\tl{\g}(\ep_4) + C_{p,d} N^2 \|\mu\|_{L^p(\R^d)} {\ep_4}^{(2p-d)/p}}.
\end{equation}

For $\mathrm{Term}_{2,1,2}$, we see from a change of variable that
\begin{align}
&\int_{\R^d}(v_{\ep_2}(y)-v_{\ep_2}(x_j)) \cdot(\nabla\g_{\alpha_j})(y-x_j)d(\d_{x_i}^{(\eta_i)}-\d_{x_i}^{(\al_i)})(y) \nn\\
&=\int_{\R^d} \paren*{v_{\ep_2}(\eta_i y+x_i-x_j)\cdot(\nabla\g_{\alpha_j})(\eta_i y +x_i-x_j) - v_{\ep_2}(\al_i y+x_i-x_j) \cdot(\nabla\g_{\alpha_j})(\al_i y +x_i-x_j)} d\d_0^{(1)}(y). \label{eq:LL_MVT_bnd}
\end{align}
Since $\ep_4,\ep_3>\ep_2\geq 2\ep_1$ by assumption and since $\eta_i\leq \alpha_i\leq \ep_1$, for every $i\in\{1,\ldots,N\}$, also by assumption, it follows from the log-Lipschitz property of $v_{\ep_2}$ and the (reverse) triangle inequality that
\begin{align}
|\eqref{eq:LL_MVT_bnd}| &\lesssim_d \int_{\R^d} \frac{\|v_{\ep_2}\|_{{LL}(\R^d)}|\et_i-\al_i| \ln|\et_i-\al_i|^{-1}}{|\eta_i y+ x_i-x_j|^{d-1}}d\d_0^{(1)}(y) \nn\\
&\ph + \int_{\R^d}\frac{|v_{\ep_2}(\al_i y + x_i-x_j)| \cdot |\et_i-\al_i| |x_i-x_j|^{d-2}}{(|x_i-x_j|-\al_i)^{d-1}(|x_i-x_j|-\et_i)^{d-1}}d\d_0^{(1)}(y) \nn\\
&\lesssim_d \frac{\|v\|_{{LL}(\R^d)}|\eta_i-\al_i|\ln|\eta_i-\al_i|^{-1}}{\ep_4^{d-1}} + \frac{\|v\|_{L^\infty(\R^d)}|\et_i-\al_i|}{{\ep_4}^{d}}.
\end{align}
Hence,
\begin{align}
|\mathrm{Term}_{2,1,2}| &\lesssim_d \sum_{{1\leq i\neq j\leq N}\atop{|x_i-x_j|>\ep_4}}\Bigg(\frac{\|v\|_{LL(\R^d)}|\eta_i-\al_i| \ln|\eta_i-\al_i|^{-1}}{\ep_4^{d-1}} + \frac{\|v\|_{L^\infty(\R^d)}|\et_i-\al_i|}{{\ep_4}^d}\Bigg) \nn\\
&\lesssim N \sum_{i=1}^N|\et_i-\al_i|\paren*{\frac{\|v\|_{L^\infty(\R^d)}}{\ep_4^d} + \frac{\|v\|_{LL(\R^d)}\ln|\et_i-\al_i|^{-1}}{\ep_4^{d-1}}}.
\end{align}
Since $x\mapsto |x|\ln|x|^{-1}$ is increasing in the region $0<|x|<e^{-1}$, we conclude that
\begin{equation}
\label{eq:T_212_fin}
|\mathrm{Term}_{2,1,2}| \lesssim N\sum_{i=1}^N\al_i \paren*{\frac{\|v\|_{L^\infty(\R^d)}}{\ep_4^d} + \frac{\|v\|_{LL(\R^d)} |\ln \al_i|}{\ep_4^{d-1}}}.
\end{equation}

Combining estimates \eqref{eq:T_211_fin} and \eqref{eq:T_212_fin} for $\mathrm{Term}_{2,1,1}$ and $\mathrm{Term}_{2,1,2}$, respectively, we conclude that
\begin{equation}
\label{eq:T_21_fin}
\begin{split}
|\mathrm{Term}_{2,1}| &\lesssim_d \|v\|_{LL(\R^d)} \ln(\ep_2^{-1}) \paren*{\Fr_N(\ux_N,\mu) + N\tl{\g}(\ep_4) + C_{p,d} N^2 \|\mu\|_{L^p(\R^d)} {\ep_4}^{(2p-d)/p}} \\
&\ph+N\sum_{i=1}^N \al_i\paren*{\frac{\|v\|_{L^\infty(\R^d)}}{\ep_4^d} + \frac{\|v\|_{LL(\R^d)} \ln(\al_i^{-1})}{\ep_4^{d-1}}}.
\end{split}
\end{equation}

\item[Estimate for $\mathrm{Term}_{2,2}$]
We claim that $\mathrm{Term}_{2,2}=0$. To see this, note by identity \eqref{eq:f_conv_smear} that
\begin{equation}
\d_{x_i}^{(\eta_i)}-\d_{x_i}^{(\alpha_i)} = (-\D\f_{\alpha_i,\eta_i})(\cdot-x_i).
\end{equation}
So, we can integrate by parts once, using identity \eqref{eq:delta_smear} and a change of variable, in order to obtain
\begin{equation}
\mathrm{Term}_{2,2} = 2\sum_{1\leq i\neq j\leq N}\int_{\R^d}v_{\ep_2}(x_i)\cdot (\nabla\f_{\alpha_i,\eta_i})(y-x_i)d\d_{x_j}^{(\alpha_j)}(y).
\end{equation}
Now note that $\supp(\nabla\f_{\al_i,\et_i}(\cdot-x_i))\subset \ol{B}(x_i,\al_i)$ by identity \eqref{eq:f_grad_et_al_id} and $\supp(\d_{x_j}^{(\al_j)})\subset \ol{B}(x_j,\al_j)$, while the balls $B(x_i,\al_i), B(x_j,\al_j)$ are disjoint, for $i\neq j$. Thus, we conclude that each of the summands in $\mathrm{Term}_{2,2}$ vanishes, implying
\begin{equation}
\label{eq:T_22_fin}
\mathrm{Term}_{2,2} = 0.
\end{equation}

\item[Estimate for $\mathrm{Term}_{2,3}$]
Using that $\|v_{\ep_2}\|_{{LL}(\R^d)}\leq \|v\|_{{LL}(\R^d)}$ by \cref{lem:conv_bnds}, we find that
\begin{equation}
|\mathrm{Term}_{2,3}| \lesssim_d \|v\|_{{LL}(\R^d)}\sum_{1\leq i\neq j\leq N} \int_{\R^d}|x-x_i| \ln(|x-x_i|^{-1}) |(\nabla\g\ast\mu)(x)| d(\d_{x_i}^{(\eta_i)}+\d_{x_i}^{(\al_i)})(x).
\end{equation}
Since $\|\mu\|_{L^1(\R^d)}=1$, \cref{lem:PE_bnds} implies
\begin{equation}
\|\nabla\g\ast\mu\|_{L^\infty(\R^d)} \lesssim_{p,d} \|\mu\|_{L^p(\R^d)}^{\frac{(d-1)p}{d(p-1)}}.
\end{equation}
Since $x\mapsto |x| \ln|x|^{-1}$ is increasing for $0<|x|<e^{-1}$ and $\eta_i<\al_i$, it follows from the supports of $\d_{x_i}^{(\al_i)}, \d_{x_i}^{(\eta_i)}$ that
\begin{align}
|\mathrm{Term}_{2,3}| &\lesssim_{p,d} \|v\|_{{LL}(\R^d)}\|\mu\|_{L^p(\R^d)}^{\frac{(d-1)p}{d(p-1)}}\sum_{1\leq i\neq j\leq N}\al_i \ln(\al_i^{-1}) \leq N\|v\|_{{LL}(\R^d)} \|\mu\|_{L^p(\R^d)}^{\frac{(d-1)p}{d(p-1)}}\sum_{i=1}^N \al_i \ln(\al_i^{-1}), \label{eq:T_23_fin}
\end{align}
which completes the analysis for $\mathrm{Term}_{2,3}$.

\item[Estimate for $\mathrm{Term}_{2,4}$]
By a change of variable,
\begin{equation}
\label{eq:mod_cont_app}
\begin{split}
&\int_{\R^d}v_{\ep_2}(x_i)\cdot(\nabla\g\ast\mu)(x) d(\d_{x_i}^{(\et_i)}-\d_{x_i}^{(\al_i)})(x) \\
&=\int_{\R^d}v_{\ep_2}(x_i)\cdot\paren*{(\nabla\g\ast\mu)(x_i+\et_iy)-(\nabla\g\ast\mu)(x_i+\al_iy)}d\d_0^{(1)}(y),
\end{split}
\end{equation}
for every $i\in\{1,\ldots,N\}$. By \cref{lem:PE_bnds}, we have the modulus of continuity estimates
\begin{align}
|(\nabla\g\ast\mu)(x_i+\et_iy)-(\nabla\g\ast\mu)(x_i+\al_iy)| &\lesssim \|(\nabla\g\ast\mu)\|_{\dot{C}^{1-\frac{d}{p}}(\R^d) }|\et_i y - \al_i y|^{1-\frac{d}{p}} \nn\\
&\lesssim_{p,d} \|\mu\|_{L^p(\R^d)} |\et_i-\al_i|^{1-\frac{d}{p}}, \qquad \forall |y|=1, \ d<p<\infty
\end{align}
and
\begin{align}
|(\nabla\g\ast\mu)(x_i+\et_iy)-(\nabla\g\ast\mu)(x_i+\al_iy)| &\lesssim \|(\nabla\g\ast\mu)\|_{{LL}(\R^d)} |\et_i y-\al_i y|\ln|\et_iy-\al_iy|^{-1} \nn\\
&\lesssim_d \|\mu\|_{L^\infty(\R^d)} |\et_i-\al_i| \ln|\et_i-\al_i|^{-1}, \qquad \forall |y|=1, \ p=\infty.
\end{align}
Applying these estimates and using that the functions $x\mapsto |x|^{1-\frac{d}{p}}$ and $x\mapsto |x|\ln|x|^{-1}$ are increasing for $0<|x|<e^{-1}$, we find that
\begin{align}
|\eqref{eq:mod_cont_app}| &\leq \begin{cases} C_{p,d}\|v\|_{L^\infty(\R^d)}\|\mu\|_{L^p(\R^d)}\al_i^{1-\frac{d}{p}}, & {d<p<\infty} \\
C_{\infty,d}\|v\|_{L^\infty(\R^d)}\|\mu\|_{L^\infty(\R^d)}\al_i \ln(\al_i^{-1}), & {p=\infty}
\end{cases},
\end{align}
where $C_{p,d}, C_{\infty,d}>0$ are absolute constants. Hence,
\begin{equation}
\label{eq:T_24_fin}
|\mathrm{Term}_{2,4}| \leq \begin{cases} NC_{p,d}\|v\|_{L^\infty(\R^d)}\|\mu\|_{L^p(\R^d)}\sum_{i=1}^N \al_i^{1-\frac{d}{p}}, & {d<p<\infty} \\
N C_{\infty,d}\|v\|_{L^\infty(\R^d)}\|\mu\|_{L^\infty(\R^d)}\sum_{i=1}^N \al_i \ln(\al_i^{-1}), & {p=\infty}
\end{cases},
\end{equation}
which completes the analysis for $\mathrm{Term}_{2,4}$.
\end{description}
Combining estimates \eqref{eq:T_21_fin}, \eqref{eq:T_22_fin}, \eqref{eq:T_23_fin}, and \eqref{eq:T_24_fin} for $\mathrm{Term}_{2,1}$, $\mathrm{Term}_{2,2}$, $\mathrm{Term}_{2,3}$, and $\mathrm{Term}_{2,4}$, respectively, we see that
\begin{equation}
\label{eq:recomb_T2_fin}
\begin{split}
|\mathrm{Term}_2| &\lesssim_d  \|v\|_{LL(\R^d)} \ln(\ep_2^{-1})\paren*{\Fr_N(\ux_N,\mu) + N\tl{\g}(\ep_4) + C_{p,d} N^2 \|\mu\|_{L^p(\R^d)} {\ep_4}^{\frac{2p-d}{p}}}\\
&\ph +N\sum_{i=1}^N \al_i \paren*{\frac{\|v\|_{L^\infty(\R^d)}}{{\ep_4}^d} +\frac{\|v\|_{{LL}(\R^d)} \ln(\al_i^{-1})}{\ep_4^{d-1}}}+N\|v\|_{{LL}(\R^d)} \|\mu\|_{L^p(\R^d)}^{\frac{(d-1)p}{d(p-1)}}\sum_{i=1}^N \al_i\ln(\al_i^{-1}) \\
&\ph + N\|v\|_{L^\infty(\R^d)}\sum_{i=1}^N \paren*{C_{p,d}\|\mu\|_{L^p(\R^d)}\al_i^{1-\frac{d}{p}}1_{d<\cdot<\infty}(p) + C_{\infty,d}\|\mu\|_{L^\infty(\R^d)}\al_i \ln(\al_i^{-1})1_{\geq \infty}(p)}.
\end{split}
\end{equation}
This completes the analysis for $\mathrm{Term}_2$.
\end{description}

With estimate \eqref{eq:recomb_T1_fin} for $\mathrm{Term}_1$, the definition of which we recall from \eqref{eq:recomb_T1_def}, and estimate \eqref{eq:recomb_T2_fin} for $\mathrm{Term}_2$, the definition of which we recall from \eqref{eq:recomb_T2_def}, respectively, together with our starting identity \eqref{eq:recomb_nte}, we conclude that
\begin{equation}
\begin{split}
&\left|\int_{\R^d} (\nabla v_{\ep_2})(x) : \paren*{\comm{H_{N,\ua_N}^{\mu,\ux_N}}{H_{N,\ua_N}^{\mu,\ux_N}}_{SE} - \comm{H_{N,\ue_N}^{\mu,\ux_N}}{H_{N,\ue_N}^{\mu,\ux_N}}_{SE} - \sum_{i=1}^N\comm{\f_{\al_i,\et_i}}{\f_{\al_i,\et_i}}_{SE}(\cdot-x_i)}(x)dx\right| \\
&\lesssim_d   \|v\|_{LL(\R^d)}\ln(\ep_2^{-1}) \paren*{\Fr_N(\ux_N,\mu) + N\tl{\g}(\ep_4) + C_{p,d} N^2 \|\mu\|_{L^p(\R^d)} {\ep_4}^{\frac{(2p-d)}{p}}} \\
&\ph + N\sum_{i=1}^N \al_i\paren*{\frac{\|v\|_{L^\infty(\R^d)}}{{\ep_4}^d} +\frac{\|v\|_{{LL}(\R^d)} |\ln \al_i|}{\ep_4^{d-1}}} +N\|v\|_{{LL}(\R^d)} \|\mu\|_{L^p(\R^d)}^{\frac{(d-1)p}{d(p-1)}}\sum_{i=1}^N \al_i\ln(\al_i^{-1}) \\
&\ph + N\|v\|_{L^\infty(\R^d)}\sum_{i=1}^N \paren*{C_{p,d}\|\mu\|_{L^p(\R^d)}\al_i^{1-\frac{d}{p}} + \|\mu\|_{L^\infty(\R^d)}\al_i\ln(\al_i^{-1})1_{\geq \infty}(p)}.
\end{split}
\end{equation}
Recalling the statement of \cref{lem:kprop_recomb}, we see that the proof of the lemma is complete.
\end{proof}

\subsection{Step 5: Conclusion}
\label{ssec:kprop_conc}
Applying \cref{lem:kprop_recomb} to the identity \eqref{eq:S4_pref}, we have shown that for any $\ua_N\in (\R_+)^N$ with $\alpha_i\leq r_{i,\ep_1}$ for each $i\in\{1,\ldots,N\}$, it holds that
\begin{equation}
\label{eq:conc_RHS}
\begin{split}
&\int_{(\R^d)^2\setminus\D_2} \paren*{v_{\ep_2}(x)-v_{\ep_2}(y)}\cdot(\nabla\g)(x-y)d(\sum_{i=1}^N\d_{x_i}-N\mu)(x)d(\sum_{i=1}^N\d_{x_i}-N\mu)(y)\\
&=\mathrm{Error}_{\ep_2,\ua_N}+\int_{\R^d} (\nabla v_{\ep_2})(x) : \comm{H_{N,\ua_N}^{\mu,\ux_N}}{H_{N,\ua_N}^{\mu,\ux_N}}_{SE}(x)dx \\
&\ph -\sum_{i=1}^N\int_{(\R^d)^2} \paren*{v_{\ep_2}(x)-v_{\ep_2}(y)}\cdot(\nabla\g)(x-y)d\d_{x_i}^{(\alpha_i)}(x)d\d_{x_i}^{(\alpha_i)}(y),
\end{split}
\end{equation}
where $\mathrm{Error}_{\ep_2,\ua_N}$ satisfies the bound \eqref{eq:recomb_err_bnd}. We choose $\ua_N=\ur_{N,\ep_1}$, where the parameter $\ur_{N,\ep_1}$ is defined in \eqref{eq:r_def}.

We now use the error bound \eqref{eq:recomb_err_bnd} with $\ua_N=\ur_{N,\ep_1}$. Noting that $r_{i,\ep_1}\leq \ep_1$ by definition together with the increasing property of $r\mapsto r|\ln r|$ for $r\in (0,e^{-1})$ and some algebra, we find that
\begin{equation}
\label{eq:conc_err_fin}
\begin{split}
\left|\mathrm{Error}_{\ep_2,\ur_{N,\ep_1}}\right| &\lesssim_{d} \|v\|_{LL(\R^d)}\ln(\ep_2^{-1})\paren*{\Fr_N(\ux_N,\mu)+N\tl{\g}(\ep_4)+C_{p,d}N^2\|\mu\|_{L^p(\R^d)}\ep_4^{\frac{2p-d}{p}}} \\
&\ph + \frac{N^2\ep_1}{\ep_4^{d-1}}\paren*{\frac{\|v\|_{L^\infty(\R^d)}}{\ep_4}+\|v\|_{LL(\R^d)}\ln(\ep_1^{-1})} + N^2\|v\|_{LL(\R^d)}\|\mu\|_{L^p(\R^d)}^{\frac{(d-1)p}{d(p-1)}}\ep_1\ln(\ep_1^{-1}) \\
&\ph + N^2\|v\|_{L^\infty(\R^d)}\paren*{\|\mu\|_{L^p(\R^d)}\ep_1^{1-\frac{d}{p}} + \|\mu\|_{L^\infty(\R^d)}\ep_1\ln(\ep_1^{-1})1_{\geq\infty}(p)}.
\end{split}
\end{equation}

For the third term in the right-hand side of \eqref{eq:conc_RHS}, we use the mean value theorem to obtain the estimate
\begin{align}
&\left|\sum_{i=1}^N\int_{(\R^d)^2} \paren*{v_{\ep_2}(x)-v_{\ep_2}(y)}\cdot(\nabla\g)(x-y)d\d_{x_i}^{(r_{i,\ep_1})}(x)d\d_{x_i}^{(r_{i,\ep_1})}(y)\right| \nn\\
&\lesssim \|\nabla v_{\ep_2}\|_{L^\infty(\R^d)}\sum_{i=1}^N \int_{(\R^d)^2}\g(x_i-x_j)d\d_{x_i}^{(r_{i,\ep_1})}(x)d\d_{x_i}^{(r_{i,\ep_1})}(y) \nn\\
&\lesssim_d \|v\|_{LL(\R^d)}\ln(\ep_2^{-1})\sum_{i=1}^N\tl{\g}(r_{i,\ep_1}),
\end{align}
where the ultimate inequality follows from \cref{lem:g_smear_si}. Using estimate \eqref{eq:g_r_sum_bnd} of \cref{cor:grad_H}, we conclude that
\begin{equation}
\begin{split}
&\left|\sum_{i=1}^N\int_{(\R^d)^2} \paren*{v_{\ep_2}(x)-v_{\ep_2}(y)}\cdot(\nabla\g)(x-y)d\d_{x_i}^{(r_{i,\ep_1})}(x)d\d_{x_i}^{(r_{i,\ep_1})}(y)\right|\\
&\lesssim_d \|v\|_{LL(\R^d)}\ln(\ep_2^{-1})\paren*{\Fr_N(\ux_N,\mu) + N\tl{\g}(\ep_1)+ C_{p,d}N^2\|\mu\|_{L^p(\R^d)}\ep_1^{\frac{2p-d}{p}}}. \label{eq:conc_T2_fin}
\end{split}
\end{equation}

For the second term in the right-hand side of \eqref{eq:conc_RHS}, we use H\"older's inequality, \cref{lem:conv_bnds}, and the point-wise bound
\begin{equation}
|\comm{H_{N,\ur_{N,\ep_1}}^{\mu,\ux_N}}{H_{N,\ur_{N,\ep_1}}^{\mu,\ux_N}}_{SE}(x)| \lesssim |(\nabla H_{N,\ur_{N,\ep_1}}^{\mu,\ux_N})(x)|^2 \qquad \forall x\in\R^d,
\end{equation}
which is immediate from the definition \eqref{eq:se_ten_def} of the stress-energy tensor and Young's inequality for products, in order to obtain
\begin{align}
\left|\int_{\R^d} (\nabla v_{\ep_2})(x) :\comm{H_{N,\ur_{N,\ep_1}}^{\mu,\ux_N}}{H_{N,\ur_{N,\ep_1}}^{\mu,\ux_N}}_{SE}(x)dx\right| &\lesssim \|\nabla v_{\ep_2}\|_{L^\infty(\R^d)}\int_{\R^d} |(\nabla H_{N,\ur_{N,\ep_1}}^{\mu,\ux_N})(x)|^2dx \nn\\
&\lesssim \ln(\ep_2^{-1})\|v\|_{LL(\R^d)}\int_{\R^d} |(\nabla H_{N,\ur_{N,\ep_1}}^{\mu,\ux_N})(x)|^2dx. \label{eq:v_ep_Lip}
\end{align}
Now using estimate \eqref{eq:grad_H_r_bnd} from \cref{cor:grad_H}, we conclude that
\begin{equation}
\label{eq:conc_T1_fin}
\begin{split}
&\left|\int_{\R^d}(\nabla v_{\ep_2})(x) :\comm{H_{N,\ur_{N,\ep_1}}^{\mu,\ux_N}}{H_{N,\ur_{N,\ep_1}}^{\mu,\ux_N}}_{SE}(x)dx\right|\\
&\lesssim_d \ln(\ep_2^{-1})\|v\|_{LL(\R^d)}\paren*{\Fr_N(\ux_N,\mu) + N\tl{\g}(\ep_1) + C_{p,d}N^2\|\mu\|_{L^p(\R^d)}\ep_1^{\frac{2p-d}{p}}}. 
\end{split}
\end{equation}

Combining estimates \eqref{eq:conc_err_fin}, \eqref{eq:conc_T2_fin}, and \eqref{eq:conc_T1_fin}, we obtain that
\begin{equation}
\label{eq:conc_v_ep_fin}
\begin{split}
&\left|\int_{(\R^d)^2\setminus\D_2} \paren*{v_{\ep_2}(x)-v_{\ep_2}(y)}\cdot(\nabla\g)(x-y)d(\sum_{i=1}^N\d_{x_i}-N\mu)(x)d(\sum_{i=1}^N\d_{x_i}-N\mu)(y)\right| \\
&\lesssim_d \|v\|_{LL(\R^d)}\ln(\ep_2^{-1})\paren*{\Fr_N(\ux_N,\mu)+N\tl{\g}(\ep_1)+C_{p,d}N^2\|\mu\|_{L^p(\R^d)}\ep_4^{\frac{2p-d}{p}}} \\
&\ph + \frac{N^2\ep_1}{\ep_4^{d-1}}\paren*{\frac{\|v\|_{L^\infty(\R^d)}}{\ep_4}+\|v\|_{LL(\R^d)}\ln(\ep_1^{-1})} + N^2\|v\|_{LL(\R^d)}\|\mu\|_{L^p(\R^d)}^{\frac{(d-1)p}{d(p-1)}}\ep_1\ln(\ep_1^{-1}) \\
&\ph + N^2\|v\|_{L^\infty(\R^d)}\paren*{\|\mu\|_{L^p(\R^d)}\ep_1^{1-\frac{d}{p}} + \|\mu\|_{L^\infty(\R^d)}\ep_1\ln(\ep_1^{-1})1_{\geq \infty}(p)}.
\end{split}
\end{equation}
Combining estimate \eqref{eq:conc_v_ep_fin} with \cref{lem:kprop_error}, we finally conclude that
\begin{equation}
\begin{split}
&\left|\int_{(\R^d)^2\setminus\D_2} \paren*{v(x)-v(y)}\cdot(\nabla\g)(x-y)d(\sum_{i=1}^N\d_{x_i}-N\mu)(x)d(\sum_{i=1}^N\d_{x_i}-N\mu)(y)\right| \\
&\lesssim_d \|v\|_{LL(\R^d)}\paren*{\ln_+(N^2H_{N,d}) + \ln(\ep_2^{-1})}\Fr_N(\ux_N,\mu) + \|v\|_{LL(\R^d)}N\paren*{\ln(\ep_2^{-1})\tl{\g}(\ep_1)+\ln_+(N^2H_{N,d})\tl{\g}(\ep_3)}\\
&\ph + \|v\|_{LL(\R^d)}C_{p,d}N^2\|\mu\|_{L^p(\R^d)}\paren*{\ln(\ep_2^{-1})\ep_4^{\frac{2p-d}{p}}+\ln_+(N^2H_{N,d})\ep_3^{\frac{2p-d}{p}}} \\
&\ph + \frac{N^2\ep_1}{\ep_4^{d-1}}\paren*{\frac{\|v\|_{L^\infty(\R^d)}}{\ep_4} + \|v\|_{LL(\R^d)}\ln(\ep_1^{-1})} + N^2\|v\|_{LL(\R^d)}\|\mu\|_{L^p(\R^d)}^{\frac{(d-1)p}{d(p-1)}}\ep_1\ln(\ep_1^{-1}) \\
&\ph + N^2\|v\|_{LL(\R^d)}\ep_2\ln(\ep_2^{-1})\paren*{\frac{1}{\ep_3^{d-1}}+\|\mu\|_{L^p(\R^d)}^{\frac{(d-1)p}{d(p-1)}}} \\
&\ph + N^2\|v\|_{L^\infty(\R^d)}\paren*{\|\mu\|_{L^p(\R^d)}\ep_1^{1-\frac{d}{p}}+\|\mu\|_{L^\infty(\R^d)}\ep_1\ln(\ep_1^{-1})1_{\geq\infty}(p)},
\end{split}
\end{equation}
which completes the proof of \cref{prop:kprop}.

\section{Proof of Main Results}
\label{sec:MR}
In this last section, we prove our main results \cref{thm:main} and \cref{cor:main}. We first record a lemma giving the precise statement for the time derivative of the modulated energy $\Fr_N$. We leave filling in the details of the proof of the lemma as an exercise for the interest reader.

\begin{lemma}[Modulated energy derivative]
\label{lem:en_ineq}
Let $d\geq 3$ and $N\in\N$. Let $\ux_N\in C^\infty([0,\infty);\R^d\setminus\D_N)$ be a solution to the system \eqref{eq:Cou_sys}, and let $\omega \in L^\infty([0,T]; \P(\R^d)\cap L^p(\R^d))$, for some $d<p\leq\infty$, be a weak solution to the equation \eqref{eq:Cou_pde}. Then $\Fr_N^{avg}(\ux_N,\mu): [0,\infty)\rightarrow [0,\infty)$ is locally Lipschitz continuous, and for a.e. $t\in [0,T]$, we have the identity
\begin{equation}
\label{eq:en_ineq}
\frac{d}{dt}\Fr_N^{avg}(\ux_N(t),\omega(t)) = \int_{(\R^d)^2\setminus\D_2} (\nabla\g)(x-y)\cdot\paren*{u(t,x)-u(t,y)}d(\omega_N-\omega)(t,x)d(\omega_N-\omega)(t,y),
\end{equation}
where $u$ is the velocity field associated to $\omega$ through the Biot-Savart law and $\omega_N\coloneqq \frac{1}{N}\sum_{i=1}^N \d_{x_i}$ is the empirical measure associated to $\ux_N$.
\end{lemma}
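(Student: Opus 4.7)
The plan is to expand $\Fr_N^{avg}$ into three pieces, argue that two of them are conserved, and differentiate the remaining one by hand using the ODE \eqref{eq:Cou_sys} and the weak formulation \eqref{eq:WVF} of the PDE. Expanding the product measure $d(\omega_N-\omega)\otimes d(\omega_N-\omega)$ on $(\R^d)^2\setminus\D_2$ and using that the diagonal has $\omega$-measure zero (since $\omega\in L^p$ with $p>d/2$ makes $\omega$ non-atomic, and $\g\ast\omega\in L^\infty$ by \cref{lem:PE_bnds}), one obtains
\begin{equation*}
\Fr_N^{avg}(\ux_N,\omega) = 2H_{N,d}(\ux_N) - \frac{2}{N}\sum_{i=1}^N(\g\ast\omega)(x_i) + 2H_d(\omega),
\end{equation*}
where we have used that $H_{N,d}$ is defined with intensities $a_i\equiv 1/N$.

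The first term $2H_{N,d}(\ux_N(t))$ is constant in $t$ because it is the $N$-body Hamiltonian evaluated along its own flow. The third term $2H_d(\omega(t))$ is also conserved: testing the weak formulation \eqref{eq:WVF} against a suitable approximation of $\g\ast\omega$ (necessary because $\g\ast\omega$ is not a standard test function, but is bounded and continuous by \cref{lem:PE_bnds}), one finds formally
\begin{equation*}
\frac{d}{dt}H_d(\omega) = \int_{\R^d} u(x)\cdot(\nabla\g\ast\omega)(x)\,d\omega(x) = \int_{\R^d} \J(\nabla\g\ast\omega)(x)\cdot(\nabla\g\ast\omega)(x)\,d\omega(x) = 0
\end{equation*}
by the antisymmetry of $\J$; the approximation argument is standard and I would carry it out by mollifying $\omega$ in space, using the Lipschitz-in-time regularity of $\omega$ guaranteed by the continuity equation, and passing to the limit with the help of \cref{lem:PE_bnds}.

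Thus the entire time dependence is carried by the middle term, which I would differentiate by the chain rule. Using $\dot x_i = \J(\nabla\g\ast\omega_N^{(i)})(x_i)$ with $\omega_N^{(i)}\coloneqq \frac{1}{N}\sum_{k\neq i}\d_{x_k}$, and the identity $(\partial_t \g\ast\omega)(x) = -\int(\nabla\g)(x-y)\cdot u(y)d\omega(y)$ obtained from \eqref{eq:Cou_pde}, one arrives at
\begin{equation*}
\frac{d}{dt}\Fr_N^{avg} = -\frac{2}{N}\sum_i \J(\nabla\g\ast\omega_N^{(i)})(x_i)\cdot(\nabla\g\ast\omega)(x_i) + \frac{2}{N}\sum_i \int(\nabla\g)(x_i-y)\cdot u(y)\,d\omega(y).
\end{equation*}
On the other hand, expanding the target integral on the right-hand side of \eqref{eq:en_ineq}, using the symmetry $(\nabla\g)(x-y)\cdot(u(x)-u(y))$ under swapping $x\leftrightarrow y$, and the crucial cancellations
\begin{equation*}
u(x)\cdot(\nabla\g\ast\omega)(x) = \J(\nabla\g\ast\omega)(x)\cdot(\nabla\g\ast\omega)(x) = 0,
\end{equation*}
one finds that the pure $\omega\otimes\omega$ piece and the ``self'' part of the cross $\omega_N\otimes\omega$ piece vanish identically, and the surviving terms match the two terms in the derivative above exactly (modulo the substitution $u=\J(\nabla\g\ast\omega)$ together with $\J^T=-\J$).

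The main technical obstacle is not the algebraic identity, which is a formal consequence of the two Hamiltonian structures and antisymmetry of $\J$, but the rigorous justification of the manipulations at the level of a merely $L^\infty$ weak solution $\omega$: specifically, (i) establishing the absolute continuity in $t$ needed to pass from the identity \eqref{eq:WVF} to pointwise-in-time derivatives, and (ii) using $(\g\ast\omega)(x_i(\cdot))$ or an approximation thereof as a valid time-dependent test function in \eqref{eq:WVF}. I would handle both via a standard spatial mollification of $\omega$ by $\chi_\delta$, observing that $u_\delta\coloneqq \J\nabla\g\ast\omega_\delta$ is smooth, that $\omega_\delta$ solves a regularized transport equation with Lipschitz velocity, that all three pieces of $\Fr_N^{avg}$ are uniformly continuous in $\delta$ thanks to \cref{lem:PE_bnds,lem:conv_bnds}, and that the velocity commutator $[\chi_\delta,u\cdot\nabla]\omega$ converges to zero in a suitable weak sense (a DiPerna--Lions type argument). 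The local Lipschitz continuity of $t\mapsto \Fr_N^{avg}(\ux_N(t),\omega(t))$ follows once the derivative is shown to be locally bounded, which is clear from \cref{prop:kprop} applied with $v=u$ together with \cref{lem:PE_bnds}.
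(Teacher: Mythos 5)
Your proposal is correct and takes essentially the same route as the paper, which simply defers to the $d=2$ argument of \cite[Lemma 5.1]{Rosenzweig2020_PVMF}: expand $\Fr_N^{avg}$ into the $N$-body energy, the cross term, and the mean-field energy, use conservation of $H_{N,d}$ and of $H_d$, differentiate the cross term via the ODE and the weak formulation, and justify the manipulations for the low-regularity solution by mollification. Your algebraic identification of the surviving terms with the symmetrized integral (using $u=\J\nabla\g\ast\om$ and $\J^T=-\J$) checks out.
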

\begin{proof}
See the proof of \cite[Lemma 5.1]{Rosenzweig2020_PVMF}, which treats the $d=2$ case, and whose arguments generalize to higher dimensions.
\end{proof}

\subsection{Proof of \cref{thm:main}}
\label{ssec:MR_thm}
We now prove \cref{thm:main} using \cref{lem:en_ineq}. By the fundamental theorem of calculus for Lebesgue integrals and the triangle inequality, we have that
\begin{equation}
\begin{split}
\left|\Fr_N^{avg}(\ux_N(t),\omega(t))\right| &\leq \left|\Fr_N^{avg}(\ux_N(0),\om(0))\right| \\
&\ph + \int_0^t \left|\int_{(\R^d)^2\setminus\D_2} (\nabla\g)(x-y)\cdot\paren*{u(s,x)-u(s,y)}d(\omega-\omega_N)(s,x)d(\omega-\omega_N)(s,y)\right|ds.
\end{split}
\end{equation}
Applying \cref{prop:kprop} with $p=\infty$ to the integrand point-wise in $s$ and using the bounds
\begin{align}
\|u\|_{L^\infty([0,T];L^\infty(\R^d))} &\lesssim_d \|\om^0\|_{L^\infty(\R^d)}^{\frac{d-1}{d}} \\
\|u\|_{L^\infty([0,T];LL(\R^d))} &\lesssim_d \|\om^0\|_{L^\infty(\R^d)},
\end{align}
which follow by \cref{lem:PE_bnds} and conservation of the $L^\infty$ norm, there exists a constant $C_{1,d}>0$ such that
\begin{equation}
\label{eq:ep_pre}
\begin{split}
|\Fr_N^{avg}(\ux_N(t),\om(t))| &\leq |\Fr_N^{avg}(\ux_N(0),\om(0))| + C_{1,d}\|\om^0\|_{L^\infty(\R^d)}\paren*{\ln_+(N^2H_{N,d})+\ln(\ep_2^{-1})}\int_0^t |\Fr_N^{avg}(\ux_N(s),\om(s))|ds \\
&\ph + \frac{C_{1,d}t\|\om^0\|_{L^\infty(\R^d)}}{N}\paren*{\frac{\ln(\ep_2^{-1})}{\ep_1^{d-2}}+\frac{\ln_+(N^2H_{N,d})}{\ep_3^{d-2}}}\\
&\ph + C_{1,d}t\|\om^0\|_{L^\infty(\R^d)}^{2}\paren*{\ln(\ep_2^{-1})\ep_4^2 + \ln_+(N^2H_{N,d})\ep_3^2} \\
&\ph + C_{1,d}t\paren*{\frac{\|\om^0\|_{L^\infty(\R^d)}^{\frac{d-1}{d}}\ep_1}{\ep_4^d} + \frac{\|\om^0\|_{L^\infty(\R^d)}\ep_1\ln(\ep_1^{-1})}{\ep_4^{d-1}} + \|\om^0\|_{L^\infty(\R^d)}^{\frac{2d-1}{d}}\ep_1\ln(\ep_1^{-1})} \\
&\ph + C_{1,d}t\|\om^0\|_{L^\infty(\R^d)}\paren*{\frac{\ep_2\ln(\ep_2^{-1})}{\ep_3^{d-1}} + \|\om_0\|_{L^\infty(\R^d)}^{\frac{d-1}{d}}\ep_2\ln(\ep_2^{-1})}.
\end{split}
\end{equation}
We choose $\ep_4=\ep_1^{1/(d+2)}$, $\ep_3=\ep_2^{1/(d+1)}$, $\ep_2=\ep_1^{(d+1)/(d+2)}$, and $\ep_1=N^{-(d+2)/(d^2-2)}$.\footnote{Note that in contrast to \cite{Rosenzweig2020_PVMF}, there is no need here to take the $\ep_j$ to be time-dependent.} Substituting these choices into \eqref{eq:ep_pre} and simplifying, we find that there exists a constant $C_{2,d}>0$ such that
\begin{equation}
\label{eq:ep_post}
\begin{split}
|\Fr_N^{avg}(\ux_N(t),\om(t))| &\leq |\Fr_N^{avg}(\ux_N(0),\om(0))| +C_{1,d}\|\om^0\|_{L^\infty(\R^d)}\paren*{\ln_+(N^2H_{N,d})+\ln(N)}\int_0^t |\Fr_N^{avg}(\ux_N(s),\om(s))|ds \\
&\ph + C_{2,d}t\paren*{\|\om^0\|_{L^\infty(\R^d)} + \|\om^0\|_{L^\infty(\R^d)}^2}\frac{\ln(N) + \ln_+(N^2H_{N,d})}{N^{\frac{2}{d^2-2}}}.
\end{split}
\end{equation}
By the Gronwall-Bellman inequality, it follows that
\begin{align}
|\Fr_N^{avg}(\ux_N(t),\om(t))| &\leq \G_{d,N}(t)\exp{C_{1,d}t\|\om^0\|_{L^\infty(\R^d)}\paren*{\ln_+(N^2H_{N,d})+\ln(N)}},
\end{align}
where
\begin{equation}
\G_{d,N}(t)\coloneqq |\Fr_N^{avg}(\ux_N(0),\om(0))| + C_{2,d}t\paren*{\|\om^0\|_{L^\infty(\R^d)} + \|\om^0\|_{L^\infty(\R^d)}^2}\frac{\ln(N) + \ln_+(N^2H_{N,d})}{N^{\frac{2}{d^2-2}}}.
\end{equation}
Thus the proof of \cref{thm:main} is complete.

\subsection{Proof of \cref{cor:main}}
\label{ssec:MR_cor}
We now prove \cref{cor:main}.

\begin{proof}[Proof of {\cref{cor:main}}]
Fix $s<-d/2$. If there exists $\eta>0$ such that $|\Fr_N^{avg}(\ux_N(0),\om(0))|\leq C_\et N^{-\et}$, then
\begin{equation}
\sup_{N\in\N} |H_{N,d}| \lesssim_\eta |H_d|,
\end{equation}
where we recall that $H_d$ is the Hamiltonian from \eqref{eq:Cou_pde_ham}. So by \cref{prop:CE_coer} and \cref{thm:main},
\begin{align}
\|\om-\om_N\|_{L^\infty([0,T];H^s(\R^d))} &\lesssim_{s,d} (1+\|\om^0\|_{L^\infty(\R^d)})^{1/2}N^{-1/3} + N^{\frac{2s+d}{3(1-s)}} +  \sup_{0\leq t \leq T}|\Fr_N^{avg}(\ux_N(t),\om(t))|^{1/2} \nn\\
&\lesssim_{d,\eta} (1+\|\om^0\|_{L^\infty(\R^d)})^{1/2}N^{-1/3} + N^{\frac{2s+d}{3(1-s)}} \nn\\
&\ph + \paren*{N^{-\et}+T(\|\om^0\|_{L^\infty(\R^d)}+\|\om^0\|_{L^\infty(\R^d)}^2)\frac{(\ln(N)+\ln(H_{d}))}{N^{\frac{2}{d^2-2}}}}^{1/2} \nn \\
&\ph \hspace{15mm} \times N^{C_dT\|\om^0\|_{L^\infty(\R^d)}}e^{C_{d,\et}\|\om^0\|_{L^\infty(\R^d)}|H_d|T}. \label{eq:T_pre_RHS}
\end{align}
For
\begin{equation}
0< C_d\|\om^0\|_{L^\infty(\R^d)}T < \min\{\frac{1}{d^2-2},\frac{\et}{2}\},
\end{equation}
the right-hand side of \eqref{eq:T_pre_RHS} is $\lesssim_{d,s,\eta}$ a quantity which tends to zero as $N\rightarrow\infty$. Further restricting $T$ improves the rate of convergence. Weak-* convergence now follows by a standard approximation argument. We omit the details.
\end{proof}

\bibliographystyle{siam}
\bibliography{PointVortex}
\end{document}